\newcommand{\id}{\operatorname{id}}
\newcommand{\ind}{\operatorname{ind}}
\newcommand{\res}{\operatorname{res}}
\newcommand{\ho}{\operatorname{Hom}}
\newcommand{\g}[2]{\operatorname{GL}_{#1}(#2)}
\newcommand{\diag}{\operatorname{diag}}
\newcommand{\End}{\operatorname{End}}
\newcommand{\Rad}{\operatorname{Rad}}
\newtheorem{theorem}{Theorem}[section]
\newtheorem{lemma}[theorem]{Lemma}
\newtheorem{remark}[theorem]{Remark}
\title{On typical representations for depth-zero components of split
  classical groups} \author{{Amiya Kumar Mondal} and
  {Santosh Nadimpalli}} \date{\today}
\begin{document}
\begin{abstract}
  Let ${\bf G}$ be a split classical group over a non-Archimedean
  local field $F$ with the cardinality of the residue field $q_F>5$.  Let
  $M$ be the group of $F$-points of a Levi factor of a proper
  $F$-parabolic subgroup of ${\bf G}$. Let $[M, \sigma_M]_M$ be an
  inertial class such that $\sigma_M$ contains a depth-zero
  Moy--Prasad type of the form $(K_M, \tau_M)$, where $K_M$ is a
  hyperspecial maximal compact subgroup of $M$. Let $K$ be a
  hyperspecial maximal compact subgroup of ${\bf G}(F)$ such that $K$
  contains $K_M$.  In this article, we classify $\mathfrak{s}$-typical
  representations of $K$. In particular, we show that the
  $\mathfrak{s}$-typical representations of $K$ are precisely the
  irreducible subrepresentations of $\ind_J^K\lambda$, where
  $(J, \lambda)$ is a level-zero $G$-cover of $(K\cap M, \tau_M)$.
\end{abstract}{\let\thefootnote\relax\footnote{ {\it 2010 Mathematics
      Subject classification}. Primary 22E50;
    Secondary 11F70. \\
    {\it Keywords and phrases:} Representation theory of $p$-adic
    groups; Theory of types; Inertial equivalence; Bernstein
    decomposition; Typical representations; Depth-zero inertial
    classes; Classical groups.}}
\maketitle
\section{Introduction}
Let $F$ be a non-Archimedean local field with ring of integers
$\mathfrak{o}_F$. Let $\mathfrak{p}_F$ be the maximal ideal of
$\mathfrak{o}_F$. Let $k_F$ be the residue field of $\mathfrak{o}_F$
and we assume that $k_F$ has cardinality $q_F>5$. Let ${\bf G}$ be any
reductive algebraic group over $F$, and let $G$ be the group of
$F$-rational points of ${\bf G}$. Let $K$ be any maximal compact
subgroup of $G$. All representations in this article are defined over
complex vector spaces.

Let $(M, \sigma_M)$ be a pair consisting of a Levi factor $M$ of an
$F$-parabolic subgroup of $G$, and a cuspidal representation
$\sigma_M$ of $M$. Recall that two such pairs $(M_1, \sigma_{M_1})$
and $(M_2, \sigma_{M_2})$ are called {\it inertially equivalent} if
there exists an element $g\in G$ such that
$$M_1=gM_2g^{-1}~{\rm and}~\sigma_{M_1}\simeq \sigma_{M_2}^g\otimes\chi,$$
where $\chi$ is an unramified character of $M_1$. Equivalence classes
for this relation are called {\it inertial classes}. The inertial
class containing the pair $(M, \sigma_M)$ is denoted by
$[M, \sigma_M]_G$ (or by $[M, \sigma_M]$ if $G$ is clear from the
context).  The set of inertial classes of $G$ is denoted by
$\mathcal{B}(G)$. An inertial class of the form $[G, \sigma]_G$ is
called a {\it cuspidal inertial class of $G$}.

Let $\mathcal{R}(G)$ be the category of smooth representations of
$G$. Let $\mathfrak{s}=[M, \sigma_M]_G$ be an inertial class of
$G$, and let $\mathcal{R}_\mathfrak{s}(G)$ be the full subcategory of
$\mathcal{R}(G)$ consisting of smooth $G$-representations whose
irreducible subquotients occur as subquotients of
$i_P^{G}(\sigma_M\otimes\chi)$, where $P$ is an $F$-parabolic subgroup
such that $M$ is a Levi factor of $P$ and $\chi$ is an unramified
character of $M$. Here, the functor $i_P^G$ denotes
  the normalised parabolic induction. Bernstein in the article
\cite{le_centre_bernstein} showed that the category $\mathcal{R}(G)$
can be decomposed as
$$\mathcal{R}(G)=
\prod_{\mathfrak{s}\in \mathcal{B}(G)}\mathcal{R}_\mathfrak{s}(G).$$
The category $\mathcal{R}_\mathfrak{s}(G)$ is indecomposable.  In
particular, every smooth representation of $G$ can be written as a
direct sum of subrepresentations which belong to
$\mathcal{R}_\mathfrak{s}(G)$. The category
$\mathcal{R}_\mathfrak{s}(G)$ is called the {\it Bernstein component}
associated to $\mathfrak{s}$. 

Based on extensive examples for ${\rm GL}_n$, ${\rm SL}_n$, it turns
out that for a given indecomposable block
$\mathcal{R}_\mathfrak{s}(G)$, there is a natural set of irreducible
smooth representations of $K$ called $\mathfrak{s}$-typical
representations: if an $\mathfrak{s}$-typical representation of $K$
occurs in an irreducible smooth representation $\pi$ of $G$ then,
$\pi$ belongs to $\mathcal{R}_\mathfrak{s}(G)$. In this article, when
$K$ is hyperspecial, we classify $\mathfrak{s}$-typical
representations of $K$, for depth-zero inertial classes $\mathfrak{s}$
of split classical groups. We refer to the articles
\cite{Henniart-gl_2}, \cite{Paskunas-uniqueness}, \cite{forum_gl3_article},
\cite{level_zero_gl_n_types}, \cite{latham_2}, and \cite{Latham2018}
for some earlier works. We will now try to make these notations
precise and describe our main theorem.

Theory of types, developed by Bushnell--Kutzko, describes the category
$\mathcal{R}_\mathfrak{s}(G)$ in terms of modules over Hecke
algebras. We refer to \cite{Smoothrepcptgr} for a systematic
treatment.  In particular, the formalism aims to construct a pair
$(J_\mathfrak{s}, \lambda_\mathfrak{s})$ consisting of a compact open
subgroup $J_\mathfrak{s}$ of $G$, and an irreducible smooth
representation $\lambda_\mathfrak{s}$ of $J_\mathfrak{s}$ such that,
for any irreducible smooth representation $\pi$ of $G$,
\begin{equation}
\ho_{J_\mathfrak{s}}(\lambda_\mathfrak{s}, \pi)\neq 0
 \ \text{if and only if}\  \pi \in
\mathcal{R}_\mathfrak{s}(G).
\end{equation}
Such a pair $(J_\mathfrak{s}, \lambda_\mathfrak{s})$ is called a {\it
  type for $\mathfrak{s}$} or an {\it $\mathfrak{s}$-type}. 

A type $(J_\mathfrak{s},\lambda_\mathfrak{s})$, for an inertial class
$\mathfrak{s}=[M, \sigma_M]_G$, is generally constructed in two
steps. First, a type $(J_\mathfrak{t}, \lambda_\mathfrak{t})$ is
constructed for the cuspidal inertial class
$\mathfrak{t}=[M, \sigma_M]_M$. For the inertial class
$[M, \sigma_M]_G$, a type $(J_\mathfrak{s},\lambda_\mathfrak{s})$ is
then constructed as a $G$-cover of
$(J_\mathfrak{t},\lambda_\mathfrak{t})$, in the sense of \cite[Section
8]{Smoothrepcptgr}. In particular, for any $F$-parabolic subgroup $P$
of $G$ such that $M$ is a Levi factor of $P$, a $G$-cover
$(J_\mathfrak{s}, \lambda_\mathfrak{s})$ has Iwahori decomposition
with respect to the pair $(P, M)$ i.e., $J_\mathfrak{s}\cap M$ is
equal to $J_\mathfrak{t}$,
$$\res_{J_\mathfrak{s}\cap M}\lambda_\mathfrak{s}=\lambda_\mathfrak{t},$$
and the groups $J_\mathfrak{s}\cap U$ and $J_\mathfrak{s}\cap \bar{U}$
are both contained in the kernel of $\lambda_\mathfrak{s}$. Here $U$
is the unipotent radical of $P$, and $\bar{U}$ is the unipotent
radical of the opposite parabolic subgroup of $P$ with respect to $M$.

Types $(J_\mathfrak{s}, \lambda_\mathfrak{s})$ are now constructed for
many classes of reductive groups $G$. There are several constructions
leading to different pairs $(J_\mathfrak{s}, \lambda_\mathfrak{s})$ as
types for $\mathfrak{s}$. These types contain important arithmetic
information. For ${\rm GL}_n(F)$, Bushnell and Kutzko
\cite{Orrangebook} constructed a set of types, which they called {\it
  maximal types}, for any cuspidal component. Later
in the article \cite{Bushnell-kutzko-Semisimpletypes}, they
constructed explicit $G$-covers for these maximal types. For
${\rm SL}_n(F)$, similar constructions are due to Bushnell--Kutzko and
Goldberg--Roche (see \cite{admissible_dual_sl_n_1},
\cite{admissible_dual_sl_n_2}, \cite{types_in_sl_n} and
\cite{hecke_algebras_and_sl_n_types}), for inner forms of ${\rm GL}_n$
by S\'echerre and Stevens (see
\cite{cuspidals_gl(D)_4} and \cite{semisimpletypes_gl(D)}), for
${\rm Sp}_4(F)$ by Blasco and Blondel in \cite{tsstypes_sp_4_bb} and
\cite{hecke_algebras_sp_4_bb}. Types for inertial classes of the form
$[T, \chi]$, where $T$ is a maximal split torus are constructed by
Roche \cite{AllanRoche}. For an arbitrary connected reductive group
and depth-zero components, types are constructed by
Morris, Moy and Prasad in \cite{level_zero_G-types}
and \cite{Jac_fnct_moy_prasad} respectively. For classical groups
(with $p$ odd), these construction are due to Stevens
\cite{cusps_classical}, and by Miyauchi and Stevens
\cite{miyauchi_stevens}.

Let $K$ be a maximal compact subgroup of $G$, and let $\mathfrak{s}$
be an inertial class of $G$. An irreducible smooth representation
$\tau$ of $K$ is called {\it $\mathfrak{s}$-typical} if every
irreducible smooth representation $\pi$ of $G$ such that
$\ho_{K}(\tau_, \pi)\neq 0$ is in $\mathcal{R}_\mathfrak{s}(G)$. This
notion weakens that of $\mathfrak{s}$-type introduced by Bushnell and
Kutzko: $\tau$ is an $\mathfrak{s}$-type if it is
$\mathfrak{s}$-typical and ${\rm Hom}_K(\tau, \pi)\neq 0$, for all
irreducible smooth representations $\pi$ in
$\mathcal{R}_\mathfrak{s}(G)$. An irreducible smooth representation
$\tau$ of $K$ is called {\it atypical} if $\tau$ is not an
$\mathfrak{s}$-typical representation for any
$\mathfrak{s}\in \mathcal{B}(G)$.  Let
$(J_\mathfrak{s}, \lambda_\mathfrak{s})$ be an $\mathfrak{s}$-type
such that $J_\mathfrak{s}\subseteq K$.  Then Frobenius reciprocity
shows that any irreducible subrepresentation of
\begin{equation}\label{types}
  \ind_{J_\mathfrak{s}}^K\tau_\mathfrak{s}
\end{equation}
is $\mathfrak{s}$-typical. In general, the representation
\eqref{types} is not irreducible, and hence, the isomorphism classes
of $\mathfrak{s}$-typical representations of $K$ are not necessarily
unique. In the interest of arithmetic applications, it is important
to understand the existence and classification of
$\mathfrak{s}$-typical representations of $K$.

The representation theory of maximal compact subgroups of $p$-adic
groups is quite involved. For example, a parametrisation of all
irreducible smooth representations for $K=\g{n}{\mathfrak{o}_F}$ is
not yet known.  In this regard, it is interesting to understand
irreducible smooth representations of $K$ in terms of
the Bernstein decomposition of $G$. Precisely, for
any finite set of inertial classes $\mathcal{S}$ of $G$, one wants to
understand those irreducible smooth representations $\tau$ of $K$ such
that, for an irreducible smooth representation $\pi$ of $G$,
$$\ho_{K}(\tau, \pi)\neq 0 \  \Rightarrow \  \pi \in
\mathcal{R}_\mathfrak{s}(G),\ \text{for some}\ \mathfrak{s}\ \in
\mathcal{S}.$$ This article belongs to this theme.

We now state the main results of this article.  Let $(W, q)$ be a pair
consisting of an $F$-vector space $W$, and a non-degenerate
alternating or symmetric $F$-bilinear form $q$ on $W$.  Let $G$ be the
group of $F$-points of ${\bf G}$--the connected component of the
isometry group associated to the pair $(W, q)$. {\it We assume that
  ${\bf G}$ is an $F$-split group}. For any parahoric subgroup
$\mathcal{K}$ of $G$ we denote by $\mathcal{K}^+$ the
pro-$p$-unipotent radical of $\mathcal{K}$. Let $\mathfrak{t}$ be an
inertial class $[M, \sigma_M]_M$ such that $\sigma_M^{K_M^+}\neq 0$,
for some maximal parahoric subgroup $K_M$ of $M$. The representation
$\sigma_M$ is called a {\it depth-zero} cuspidal representation of $M$
and the inertial class $\mathfrak{t}$ is called a {\it depth-zero
  inertial class}. Any irreducible $K_M$-subrepresentation of
$\sigma_M^{K_M^+}$ is the inflation of a cuspidal representation of
the finite reductive group $K_M/K_M^+$. Let $\tau_M$ be an irreducible
$K_M$-subrepresentation of $\sigma_M^{K_M^+}$.  The pair
$(K_M, \tau_M)$ is called an {\it unrefined minimal $K$-type} by Moy
and Prasad (see \cite[Definition 5.1]{unrefined_minimal_types}).  When
$K_M$ is a hyperspecial maximal compact subgroup, the pair
$(K_M, \tau_M)$ is also a $[M, \sigma_M]_M$-type in the sense of
Bushnell and Kutzko; in this case, we simply call the pair $(K_M, \tau_M)$ a
{\it depth-zero} type.

Assume that $K_M$ is a hyperspecial maximal compact subgroup of
$M$. Let $K$ be a hyperspecial maximal compact subgroup of $G$ such
that $K_M\subset K$.  Let $P$ be a parabolic subgroup of $G$ such that
$M$ is a Levi factor of $P$. Let $P(1)$ be the group $(P\cap
K)K^+$. Note that the group $P(1)$ is a parahoric
  subgroup of $G$, and we have $P(1)\cap M=K_M$. The
representation $\tau_M$ of $K_M$ extends as a representation of $P(1)$
such that $P(1)\cap U$ and $P(1)\cap \bar{U}$ are contained in the
kernel of this extension. Here, $U$ is the unipotent radical of $P$
and $\bar{U}$ is the unipotent radical of the opposite parabolic
subgroup of $P$ with respect to $M$.  With these notations, our main
result can be stated as follows:
 \begin{theorem}\label{thm intro} 
   Let $\mathfrak{s}=[M, \sigma_M]_G$ be an inertial
     class such that $M\neq G$. Let $K_M$ be a hyperspecial maximal
   compact subgroup of $M$. Assume that $\sigma_M^{K_M^+}\neq 0$, and
   let $\tau_M$ be an irreducible $K_M$-subrepresentation of
   $\sigma_M^{K_M^+}$. Let $K$ be a hyperspecial maximal compact
   subgroup of $G$ such that $K_M\subseteq K$. Then
   $\mathfrak{s}$-typical representations of $K$ are exactly the
   subrepresentations of $\ind_{P(1)}^K\tau_M$.
\end{theorem}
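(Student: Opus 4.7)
The easy direction unfolds by identifying $(P(1), \tau_M)$ as a depth-zero $G$-cover of the $\mathfrak{t}$-type $(K_M, \tau_M)$, where $\mathfrak{t} = [M, \sigma_M]_M$. The cover axioms (Iwahori decomposition with the unipotent radicals in the kernel) are verified directly from the setup preceding the theorem and essentially coincide with Morris's level-zero construction \cite{level_zero_G-types}. Hence $(P(1), \tau_M)$ is an $\mathfrak{s}$-type, and Frobenius reciprocity immediately shows every irreducible subrepresentation of $\ind_{P(1)}^K \tau_M$ is $\mathfrak{s}$-typical.

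For the hard direction, I would take an $\mathfrak{s}$-typical representation $\tau$ of $K$, embed $\tau$ in an irreducible $\pi \in \mathcal{R}_\mathfrak{s}(G)$, and realize $\pi \hookrightarrow i_P^G(\sigma_M \otimes \chi)$ for some parabolic $P = MU$ and some unramified character $\chi$. The Iwasawa decomposition $G = PK$ together with Mackey's formula yields
\[
\pi|_K \hookrightarrow \ind_{P \cap K}^K \sigma_M|_{P \cap K},
\]
where $\sigma_M|_{P \cap K}$ is inflated from $\sigma_M|_{K_M}$ via the quotient $P \cap K = K_M \cdot (K \cap U) \twoheadrightarrow K_M$ (with $U$ acting trivially). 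Frobenius reciprocity then produces a nonzero $K_M$-homomorphism from the $K \cap U$-coinvariants $\tau_{K \cap U}$ to $\sigma_M|_{K_M}$.

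The principal technical obstacle is to show that $\tau$ has depth zero, i.e., that $\tau$ factors through $K/K^+ \cong \mathbf{G}(k_F)$. Once this is achieved, the image of the above $K_M$-map lands in $\sigma_M^{K_M^+}$, which, under the hyperspecial hypothesis on $K_M$, contains $\tau_M$ as the relevant depth-zero summand. The problem then reduces to a finite-group statement: a $\mathbf{G}(k_F)$-representation $\bar\tau$ whose $U(k_F)$-coinvariants admit a nonzero $\mathbf{M}(k_F)$-map to $\tau_M$ must, by Harish-Chandra theory, be a subrepresentation of $\ind_{\bar P(k_F)}^{\mathbf{G}(k_F)} \tau_M$, which is the reduction of $\ind_{P(1)}^K \tau_M$ modulo $K^+$. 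To establish depth zero, the plan is a contradiction argument: assuming $\tau$ has positive depth, the $K_M$-map of the previous step would factor through a positive-depth $K_M$-subquotient of $\sigma_M|_{K_M}$, and a classification of the Bernstein components of $G$ supporting such positive-depth $K_M$-types (via Moy--Prasad/Morris parahoric covers for classical groups) would produce an inertial class distinct from $\mathfrak{s}$ also containing $\tau$, contradicting its typicality. Executing this contradiction in the split classical case is where the hypothesis $q_F > 5$ should enter, ensuring that the finite-group Harish-Chandra decompositions of $\sigma_M|_{K_M}$ and of the $\mathbf{M}(k_F)$-cuspidals are sufficiently non-degenerate.
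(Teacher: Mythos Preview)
The easy direction is fine and matches the paper. The hard direction contains a real gap at exactly the point you flag as the ``principal technical obstacle'': showing that an $\mathfrak{s}$-typical $\tau$ has depth zero. Your proposed contradiction argument (``a classification of the Bernstein components of $G$ supporting such positive-depth $K_M$-types \dots\ would produce an inertial class distinct from $\mathfrak{s}$'') is not a proof---no such classification is available to cite, and producing the competing inertial class is precisely the content of the theorem.

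The paper's method is quite different from your sketch. After the initial embedding into $\res_K i_P^G\sigma_M$, the paper first makes an explicit reduction (Lemma~\ref{conley_lemma}) from $\ind_{P\cap K}^K(\res_{K_M}\sigma_M)$ to $\ind_{P\cap K}^K\tau_M$, already using as black boxes the typical-representation results for ${\rm GL}_n$ and for cuspidal classical components (Lemmas~\ref{unicity_gln}--\ref{unicity_classical_cusps}). Then, rather than arguing abstractly about depth, it exhausts $\ind_{P\cap K}^K\tau_M$ by the filtration $\bigcup_{m\ge 1}\ind_{P(m)}^K\tau_M$ and shows, via an auxiliary chain $P(1,m)\supset R(m)\supset P(1,m+1)$, that each successive quotient contains only atypical constituents. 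This is done by Clifford theory (Lemma~\ref{normality_lemma}) followed by a case-by-case computation of $M_1(k_F)$-stabilizers on the graded pieces $\bar{\mathfrak{n}}_1^1$, $\bar{\mathfrak{n}}_1^2$ of the nilradical (Lemma~\ref{lemma_bounds}). The hypothesis $q_F>5$ enters only in Lemma~\ref{lemma_bounds}, to guarantee enough characters $\eta$ of $k_F^\times$ avoiding a prescribed three-element set so that one can twist to a genuinely different inertial class in the rank-one cases---not, as you suggest, through any general non-degeneracy of finite Harish-Chandra decompositions.

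If your depth-zero claim could be proved directly, your argument would in fact be cleaner than the paper's: finite-group Frobenius reciprocity would then immediately give $\bar\tau\subset\ind_{P(k_F)}^{G(k_F)}\tau_M$, and Lemma~\ref{conley_lemma} would be unnecessary. But the depth-zero step is the whole difficulty, and the filtration machinery of Section~\ref{sec_auxiliary} is what actually resolves it.
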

Let $G$ be the group of $F$-points of a reductive algebraic group
defined over $F$. For the depth-zero inertial classes of the form
$\mathfrak{s}=[G, \sigma]_G$, and $K$ is any maximal compact subgroup,
Latham \cite{latham_2} showed that an $\mathfrak{s}$-typical
representation of $K$, if it exists, is unique. We will apply this
result for split classical groups. However, for the present purposes
of this article, we only need to consider hyperspecial maximal compact
subgroups (see Lemma \ref{unicity_classical_cusps}).

Let ${\bf T}$ be a maximal split torus of ${\bf G}$
  defined over $F$. Using a Witt--Basis, we identify ${\bf T}(F)$ with
  the following sub-torus of the diagonal torus of ${\rm GL}(W)$:
$$\{\diag(t_1,\dots,t_1^{-1}):t_i\in F^\times,
1\leq i\leq n\}.$$ Let $\chi$ be a character of ${\bf T}(F)$, and let
$$\chi(\diag(t_1, \dots,t_1^{-1}))=\chi_1(t_1)\cdots\chi_n(t_n),$$ where
$\chi_i$ is a character of $F^\times$, for $1\leq i\leq n$. The
inertial class $[{\bf T}(F), \chi]_G$ is called a {\it toral inertial
class}. For any character $\eta$ of $F^\times$, let $l(\eta)$ be the
least positive integer $k$ such that $1+\mathfrak{p}_F^k$ is contained
in the kernel of $\eta$. In this article, we assume that
\begin{equation}\label{intro_7_cond}
  l(\chi_i)\neq l(\chi_j),\ \text{for}\  1\leq i\neq j\leq n.
\end{equation}

Let $K$ be a hyperspecial maximal compact subgroup of ${\bf G}$ such
that ${\bf T}(F)\cap K$ is the maximal compact subgroup of
${\bf T}(F)$. The proof of Theorem \ref{thm intro} can also be
extended to obtain a classification of $\mathfrak{s}$-typical
representations of $K$. In Section \ref{princ}, we describe Roche's
construction of a $G$-cover $(J_\chi, \chi)$ for the pair
$({\bf T}(F)\cap K, \res_{{\bf T}(F)\cap K}\chi)$ (see \cite[Section
2,3]{AllanRoche}). This construction depends on the choice of a
pinning. It is possible to choose a pinning such that
$J_\chi\subset K$. We prove the following theorem for the toral
inertial class $[{\bf T}(F), \chi]$.
\begin{theorem}
  Let $K$ be any hyperspecial maximal compact subgroup of $G$. Let
  ${\bf T}$ be any maximal split torus of ${\bf G}$ defined over
  $F$. Assume that $K\cap {\bf T}(F)$ is the maximal compact subgroup
  of ${\bf T}(F)$. Let $\chi$ be a character of ${\bf T}(F)$ which satisfies
  the condition \eqref{intro_7_cond}. Then $[{\bf T}(F),\chi]_G$-typical
  representations of $K$ are exactly the subrepresentations of
  ${\rm ind}_{J_\chi}^K \chi$.
\end{theorem}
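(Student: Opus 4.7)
The plan is to adapt the strategy of Theorem \ref{thm intro} to the toral setting, with Roche's pair $(J_\chi, \chi)$ replacing the cover $(P(1), \tau_M)$ and the regularity condition \eqref{intro_7_cond} furnishing the rigidity previously supplied by the hyperspecial Levi structure. The inclusion direction is formal: Roche \cite{AllanRoche} established that $(J_\chi, \chi)$ is a $[\mathbf{T}(F), \chi]_G$-type, so any irreducible subrepresentation $\tau$ of $\ind_{J_\chi}^K \chi$ satisfies $\ho_{J_\chi}(\chi, \tau) \neq 0$; for any irreducible $\pi$ with $\ho_K(\tau, \pi) \neq 0$, Frobenius reciprocity forces $\ho_{J_\chi}(\chi, \pi) \neq 0$ and hence $\pi \in \mathcal{R}_{[\mathbf{T}(F), \chi]_G}(G)$, so $\tau$ is typical.

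For the converse, let $\tau$ be $\mathfrak{s}$-typical with $\mathfrak{s} = [\mathbf{T}(F), \chi]_G$. By Frobenius reciprocity it suffices to produce a nonzero $J_\chi$-map $\chi \to \res_{J_\chi} \tau$. I would embed $\tau$ in some irreducible $\pi \in \mathcal{R}_\mathfrak{s}(G)$, so that $\pi$ contains the type $(J_\chi, \chi)$, and then analyse $\res_K \pi$ through a Mackey decomposition. The pieces of this decomposition are indexed, up to refinement, by elements of the relative Weyl group $W$ acting on $\chi$. The hypothesis \eqref{intro_7_cond} ensures that the $W$-orbit of $\chi|_{\mathbf{T}(F) \cap K}$ is free and that distinct Weyl conjugates $\chi^w$ are detected by the permuted tuple of levels $(l(\chi_{w(i)}))_i$; combining this with Theorem \ref{thm intro} applied to each proper intermediate Levi subgroup $M$ with $\mathbf{T}(F) \subsetneq M \subsetneq G$ rules out all but the identity piece as supports for $\mathfrak{s}$-typical constituents of $\res_K \pi$. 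The typicality of $\tau$ then forces $\tau$ to sit inside the $\chi$-piece, i.e.\ inside $\ind_{J_\chi}^K \chi$.

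The main obstacle is that Roche's $J_\chi$ is not a parahoric subgroup: it is assembled from the maximal compact subgroup of $\mathbf{T}(F)$ together with root-subgroup congruence subgroups $U_{\alpha, r_\alpha}$ whose heights depend on the levels of $\chi$ along the corresponding coroots, and the character $\chi$ is extended to these pieces in a nontrivial way. Consequently the clean Iwahori decomposition arguments available for Theorem \ref{thm intro} must be replaced by a root-by-root analysis, and one must verify at each step that $J_\chi \cap M$ coincides with Roche's subgroup for the restricted character $\chi|_{M}$, so that Theorem \ref{thm intro} is actually applicable on the intermediate Levi. The regularity hypothesis \eqref{intro_7_cond} is what makes this compatibility check succeed: it guarantees that on each intermediate Levi the restricted principal series datum is again sufficiently regular for the inductive step, and it controls the Mackey combinatorics of the double cosets $J_\chi \backslash K / J_\chi$ that would otherwise be intractable.
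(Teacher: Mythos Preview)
Your proposal has a genuine gap: you invoke Theorem~\ref{thm intro} for intermediate Levi subgroups $M$ with $\mathbf{T}(F)\subsetneq M\subsetneq G$, but Theorem~\ref{thm intro} is a statement about \emph{depth-zero} inertial classes (the cuspidal $\sigma_M$ is required to satisfy $\sigma_M^{K_M^+}\neq 0$). The toral class $[\mathbf{T}(F),\chi]$ under hypothesis~\eqref{intro_7_cond} is essentially never depth-zero: the integers $l(\chi_i)$ are distinct positive integers, so at most one of them equals $1$, and the restriction of $\chi$ to any intermediate Levi still carries characters of positive level. Thus Theorem~\ref{thm intro} is simply not applicable to those Levi blocks, and your inductive input is missing. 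Relatedly, the description of the Mackey pieces of $\res_K\pi$ as ``indexed, up to refinement, by elements of the relative Weyl group'' is too loose; what actually needs to be decomposed is $\res_K i_B^G\chi\simeq \ind_{K\cap B}^K\chi$, and its filtration is by congruence level in $\bar U_1$, not by Weyl group elements.

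The paper does not use Theorem~\ref{thm intro} at all here. It argues by induction on $n$, with the induction hypothesis being the toral theorem itself for the smaller classical factor of $M_1\simeq F^\times\times G(W')$ (Lemma~\ref{princ_multip} controls the passage from $M_1$-inertial classes to $G$-inertial classes). This reduces the problem to showing that every irreducible subquotient of $\ind_{P_1^0(m+1)}^K\chi\big/\ind_{P_1^0(m)}^K\chi$ is atypical for $m\geq l(\chi_1)$, where $P_1^0(m)$ and $R^0(m)$ are positive-level analogues of $P(1,m)$ and $R(m)$ built by fattening $J_\chi\cap M_1$ with the unipotent pieces of $P_1(m)$. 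The key technical input is Lemma~\ref{lemma_princ}, the toral counterpart of Lemma~\ref{lemma_bounds}: for each nontrivial character $\eta$ arising in the Clifford decomposition of $\ind_{P_1^0(m+1)}^{P_1^0(m)}\id$, one exhibits a twist $\chi'$ agreeing with $\chi$ on the stabilizer of $\eta$ but with $[\mathbf{T}(F),\chi']_G\neq[\mathbf{T}(F),\chi]_G$ (here the hypothesis $q_F>5$ and the distinct-levels condition are used). Finally, one checks via Roche's intertwining computation that $\ind_{P_1^0(N)}^{\mathcal I}\chi\simeq \ind_{J_\chi}^{\mathcal I}\chi$ for $N=l(\chi_1)$, which identifies the surviving piece with $\ind_{J_\chi}^K\chi$. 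Your outline would need to replace the appeal to Theorem~\ref{thm intro} with this self-referential induction and supply the analogue of Lemma~\ref{lemma_bounds} in the positive-depth setting.
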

\section{Notations}
Let $F$ be a non-Archimedean local field with ring of integers
$\mathfrak{o}_F$. Let $\mathfrak{p}_F$ be the maximal ideal of
$\mathfrak{o}_F$ with residue field
$k_F=\mathfrak{o}_F/\mathfrak{p}_F$. Let $q_F$ be the cardinality of
$k_F$. In this article, we assume that $q_F>5$. Let $\varpi_F$ be an
uniformiser of $F$.  For any $F$-algebraic group ${\bf H}$, we denote
by $H$, the group ${\bf H}(F)$. The group $H$ is considered as a
topological group whose topology is induced from $F$.

Let ${\bf G}$ be any reductive algebraic group over $F$. For any
closed subgroup $H$ of $G$ and a smooth representation $\sigma$ of
$H$, we denote by $\ind_H^G\sigma$, the compactly induced
representation from $H$ to $G$. For any parabolic subgroup $P$ of $G$
and $\sigma$ any smooth representation of a Levi factor $M$ of $P$, we
denote by $i_{P}^G\sigma$, the normalised parabolically induced
representation of $G$. For any representations $\rho_1$ and $\rho_2$
of the groups $G_1$ and $G_2$ respectively, we denote by
$\rho_1\boxtimes\rho_2$, the tensor product representation of the
group $G_1\times G_2$.

Let $(V, q)$ be any pair consisting of a vector space $V$ over a field
$k$, and a $k$-bilinear form $q$ on $V$. We denote by $G(V,q)$ (or by
$G(V)$ when $q$ is clear from the context), the group of $k$-points of
the connected component of the isometry group of the pair $(V,q)$.
\section{Preliminaries}\label{preli}
Let $\epsilon\in\{\pm 1\}$, and let $W$ be an $F$-vector space with
a non-degenerate $F$-bilinear form $q$ such that
$$q(w_1, w_2)=\epsilon q(w_2, w_1),\  \text{for}\ w_1, w_2\in W.$$ 
Let $W^+$ be any maximal totally isotropic subspace of
$W$. Let $$(w_1, w_2,\dots,w_n)$$ be a basis of $W^+$. There exists a
maximal totally isotropic subspace $W^-$ with basis
$$(w_{-1}, w_{-2},\dots,w_{-n})$$ such that
\begin{equation}
  q(w_i, w_j)=0,\ \text{for}\ -n\leq i\neq -j\leq n,\ \text{and}\
 q(w_i, w_{-i})=1,\ \text{for}\ 1\leq i\leq n.
 \end{equation}
 The space $W^+\oplus W^-$ is a hyperbolic subspace of $W$. Let
 $(W^+\oplus W^-)\perp W_0$ be a Witt--decomposition of $W$. Note that
 $W_0$ is an anisotropic subspace of $W$. {\bf In this article, we
   assume that $\dim_FW_0\leq 1$.} Let $w_0$ be any non-zero vector in
 $W_0$, if $W_0\neq \{0\}$. The tuple of vectors
\begin{eqnarray}\label{standard_basis}
 B:=\begin{cases}
 (w_n, w_{n-1}, \cdots, w_1, w_{-1}, w_{-2}, \cdots, w_{-n})
    &\text{if $\dim(W)=2n$}\\
    (w_n, w_{n-1}, \cdots, w_1, w_0, w_{-1}, w_{-2}, \cdots, w_{-n})
    &\text{if $\dim(W)=2n+1$.}
      \end{cases}
 \end{eqnarray}
 is a basis of the space $W$. Any tuple of vectors as in $B$ is called a
 {\it standard basis} of $W$. Let $N$ be the cardinality of the basis
 $B$.  Let ${\bf G}/F$ be the connected component of the isometry group
 associated to the pair $(W,q)$. The group ${\bf G}$ is an $F$-split
 semisimple group.  Any standard basis $B$ gives the
 following isomorphism
\begin{eqnarray}
{\bf G}\simeq\begin{cases}
 {\bf SO}_{2n}/F &  \text{if}\  \epsilon=1,\
 \text{and}\  N=2n\\
 {\bf SO}_{2n+1}/F& \text{if}\ \epsilon=1\ \text{and}\ N=2n+1,\\
  {\bf Sp}_{2n}/F& \text{if} \ \epsilon=-1.
\end{cases}
\end{eqnarray}

Given any maximal split torus ${\bf T}$ (defined over $F$) of ${\bf
  G}$, there exists a standard basis $B=(w_i: -n\leq i\leq n)$ of
$W$ such that $T$ is the $G$-stabilizer of the decomposition
$$W=Fw_{n}\oplus Fw_{n-1}\oplus\cdots\oplus Fw_{-n+1}\oplus Fw_{-n}.$$
Conversely, any standard basis $B$ gives rise to a maximal split torus
${\bf T}$ in ${\bf G}$ such that $T$ is the $G$-stabilizer of the
decomposition as above. We say that the torus ${\bf T}$ is associated
to the standard basis $B$.

A {\it lattice chain} $\Lambda$ is a function from $\mathbb{Z}$
to the set of lattices in $W$ which satisfies the following conditions:
\begin{enumerate}
\item $\Lambda(j)\subsetneq \Lambda(i)$, for $i<j$, and
\item there exists an integer $e(\Lambda)$ such that
  $\Lambda(i+e(\Lambda))=\mathfrak{p}_F\Lambda(i)$, for all $i\in \mathbb{Z}$. 
\end{enumerate}  
 Given any lattice $\mathcal{L}$, let
  $\mathcal{L}^\#$ be the lattice
$$\mathcal{L}^\#:=\{w\in W\ | \ q(v, \mathcal{L})\subset \mathfrak{p}_F\}.$$
Let $\Lambda^\#$ be the lattice chain defined by setting
$$\Lambda^\#(i)=\Lambda(-i)^\#,~{\rm for~all}~i\in \mathbb{Z}.$$ A lattice
chain $\Lambda$ is called {\it self-dual} if there exists $d\in
\mathbb{Z}$ such that $\Lambda^\#(i)=\Lambda(i+d)$, for all $i\in \mathbb{Z}$. 
For any integer $i$, let $a_i(\Lambda)$ be the set defined by
$$a_i(\Lambda):=\{T\in \End_F(W)\ |\ T\Lambda(j)\subset 
\Lambda(j+i)\ \forall\ j\in \mathbb{Z}\}.$$ Let $U_0(\Lambda)$ be the
set of units in $a_0(\Lambda)$. Let $U_i(\Lambda)$ be the group
$\id_V+a_i(\Lambda)$, for any $i> 0$.  Given any self-dual lattice
chain $\mathcal{L}$, there exists a standard basis $B$, called {\it
  a splitting} of $\Lambda$, such that for any
$i\in \mathbb{Z}$:
\begin{equation}\label{splitting}
\Lambda(i)=\mathfrak{p}_F^{a_{n}+i}w_n\oplus\mathfrak{p}_F^{a_{(n-1)}+i}w_{n-1}
\oplus\cdots\oplus \mathfrak{p}_F^{a_{(-n+1)}+i}w_{-n+1}\oplus 
\mathfrak{p}_F^{a_{(-n)}+i}w_{-n}.
\end{equation} 

Given any hyperspecial maximal compact subgroup $K$ of $G$, there
exists a self-dual lattice chain $\Lambda$ such that $K$ is equal to
$G\cap U_0(\Lambda)$. Note that $e(\Lambda)=1$. Let $K(m)$ be the
group $U_m(\Lambda)\cap G$, for $m\geq 1$. The group $K(m)$ is the
principal congruence subgroup of level $m$. The group $K(m)$ is a
normal subgroup of $K$, for $m\geq 1$. Let $B$ be a standard basis
such that $B$ is a splitting of $\Lambda$. Let ${\bf T}$ be the
maximal split torus of ${\bf G}$ associated to the standard basis
$B$. The group $K\cap T$ is the maximal compact subgroup of $T$.
Let $\mathcal{L}$ be the lattice
\begin{equation}\label{standard_lattice}
\mathcal{L}:=\Lambda(0)=
\mathfrak{p}_F^{a_{n}}w_n\oplus\mathfrak{p}_F^{a_{n-1}}w_{n-1}
\oplus\cdots\oplus \mathfrak{p}_F^{a_{-n+1}}w_{-n+1}\oplus 
\mathfrak{p}_F^{a_{-n}}w_{-n}.
\end{equation}
The lattice $\mathcal{L}$ is determined by the set of integers
$\{a_i:-n\leq i\leq n\}$. Let $L_0$ be the ideal generated by the set
$\{q(w_1,w_2): w_1, w_2\in \mathcal{L}\}$ in $\mathfrak{o}_F$. Let
$\bar{q}$ be the following bilinear form:
$$\bar{q}:\dfrac{\mathcal{L}}{\mathfrak{p}_F \mathcal{L}}
\times \dfrac{\mathcal{L}}{\mathfrak{p}_F \mathcal{L}} \rightarrow
\dfrac{L_0}{\mathfrak{p}_F L_0},\ \ q(w_1, w_2)\mapsto \overline{q(w_1
  ,w_2)}\ \forall \ w_1, w_2\in W,$$ where $\overline{q(w_1 ,w_2)}$ is
the image of $q(w_1 ,w_2)$ in $L_0/\mathfrak{p}_F L_0$. Since $K$ is
hyperspecial, the form $\bar{q}$ is non-degenerate (see
\cite[3.8.1]{tits_article}). We refer to the article \cite[Section
1.6]{lamaire_comparision} for these results.

Let ${\bf T}$ be any maximal split torus of ${\bf G}$, defined over
$F$, such that $K\cap T$ is the maximal compact subgroup of $T$. Let
$B$ be the standard basis of $W$ associated to the torus ${\bf
  T}$. There exists a self-dual lattice chain $\Lambda$ such that $B$
is a splitting of $\Lambda$, and $K$ is equal to $U_0(\Lambda)\cap G$.

Until the end of Section \ref{sec_auxiliary}, we fix a 
hyperspecial maximal compact subgroup $K$ of $G$. We
fix a self-dual lattice chain $\Lambda$ defining $K$. We fix a
standard basis
\begin{equation}\label{standard_basis_1}
B=(w_i:-n\leq i\leq n)
\end{equation}
such that $B$ is a splitting of $\Lambda$. We fix the set of integers
$\{a_i: -n\leq i\leq n\}$ as in \eqref{standard_lattice}. We have
a canonical homomorphism
\begin{equation}\label{mod_p_reduction}
\pi_1:K\rightarrow K/K(1)\simeq G(\mathcal{L}\otimes k_F, \bar{q}).
\end{equation}

Let $I$ be a sequence of positive integers
\begin{equation}\label{num_inv}
  n\geq n_1\geq n_2\geq \cdots\geq n_r\geq 1.
\end{equation} Consider the sets
$$S_i^{\pm}:=\{w_{\pm n}, w_{\pm(n-1)}, \cdots, w_{\pm(n_i)}\},$$
for $1\leq i\leq r$.  Let $W^{\pm}_i$ be the subspace of $W$ spanned
by the set $S_i^{\pm}$. We denote by $V^\pm_i$, the space spanned by
the set $S^\pm_{i+1}\backslash S^\pm_i$, for $i\leq r$. Let $V_{r+1}$
be the space $(W_{r}^+\oplus W_{r}^-)^\perp$. Let $\mathcal{F}_I$ be
the flag
\begin{equation}\label{flags} 
  W_1^+\subset W_2^+\subset \cdots \subset W_r^+.
\end{equation}
Let $P_I$ be the $G$-stabiliser of the flag $\mathcal{F}_I$. Let $M_I$
be the $G$-stabiliser of the decomposition
$$V_1^+\oplus\cdots \oplus  V_r^+\oplus
V_{r+1}\oplus V_r^-\oplus \cdots \oplus V_1^-.$$ The group $P_I$ is
the group of $F$-points of an $F$-parabolic subgroup of ${\bf G}$. Let
$U_I$ be the unipotent radical of $P_I$. We have $P_I=M_I\ltimes
U_I$. We denote by $\bar{U}_I$, the unipotent radical of the opposite
parabolic subgroup of $P_I$ with respect to the group $M_I$.

Assume that $G$ is a symplectic or special orthogonal group of odd
dimension. In this case, the group of $F$-points of any $F$-parabolic
subgroup of ${\bf G}$ is $G$-conjugate to $P_I$, for some sequence $I$
as in \eqref{num_inv}. The subgroups $P_I$ are called {\it standard
  parabolic subgroups}. The group $M_I$ will be called as a {\it
  standard Levi subgroup} of $P_I$.

Assume that $G$ is special orthogonal group of even dimension. In this
case, there are two orbits of maximal totally isotropic subspaces of
$W$. The representatives for these orbits are given by the spaces
\begin{eqnarray}
W^+=&Fw_n\oplus Fw_{n-1}\oplus\cdots\oplus Fw_{1}\\\label{flags_2}
(W^+)'=&Fw_n\oplus Fw_{n-1}\oplus\cdots\oplus Fw_{2}\oplus Fw_{-1}.
\end{eqnarray}
Let $\mathcal{F}_I'$ be a flag defined as in \eqref{flags}, except for
replacing $w_1$ with $w_{-1}$.  Let $P_{I}'$ and $M_{I}'$ be parabolic
subgroups, and Levi subgroups respectively, defined similarly as above
for the flag $\mathcal{F}_I'$. The group of $F$-points of an
$F$-parabolic subgroup of ${\bf G}$ is $G$ conjugate to at least one
of the groups $P_I$ or $P_{I}'$ for some sequence
$(n_1, n_2,\dots,n_r)$ as in \eqref{num_inv}. The parabolic subgroups
$P_I'$ and $P_I$ are called the {\it standard parabolic subgroups}. The
Levi factors $M_I$ and $M_I'$, for $P_I$ and $P_I'$ respectively, are
be called the {\it standard Levi subgroups}.
\begin{remark}
  There exist sequences $I$ such that $P_I$ and $P_I'$ are
  $G$-conjugate. Hence, for even special orthogonal groups these
  groups $P_I$ and $P_I'$ are not a parametrisation. Nevertheless, any
  parabolic subgroup of $G$ is conjugate to at least one such group.
\end{remark}

Let $P$ be a standard parabolic subgroup and $M$ be a standard Levi
factor of $P$. Let $U$ be the unipotent radical of
  $P$, and let $\bar{U}$ be the unipotent radical of the opposite
  parabolic subgroup, $\bar{P}$, of $P$ with respect to $M$. Let
$P(m)$ be the following compact open group of $G$:
$$P(m)=K(m)(P\cap K).$$
Note that the group $P(1)$ is a parahoric subgroup of $G$.  The group
$P(m)$ has an Iwahori decomposition with respect to the pair $(P,
M)$. The group $K/K(1)$ can be identified with $k_F$-points of the
connected component of the isometry subgroup associated to the pair
$(\mathcal{L}\otimes_{\mathfrak{o}_F} k_F, \bar{q})$; let $\pi_1$ be
the homomorphism as in \eqref{mod_p_reduction}.  Let $P(k_F)$ be the
image of $P(1)$ under $\pi_1$. $P(k_F)$ is a parabolic subgroup of
$K/K(1)$. The group $M(k_F)=\pi_1(K\cap M)$ is a Levi factor of
$P(k_F)$.

We identify $M$ with the group
$$G_1\times G_2\times \cdots\times G_r\times G_{r+1},$$
where $G_i={\rm GL}(V_i)$, for $1\leq i\leq r$, and $G_{r+1}$ is the
group of $F$-points of the connected component of the isometry group
associated to a non-singular subspace $(V_{r+1}, q)$ of $(W,q)$. Any
cuspidal representation $\sigma_M$ of $M$ is isomorphic to
$$\sigma_1\boxtimes\cdots\boxtimes\sigma_r\boxtimes \sigma_{r+1},$$
where $\sigma_i$ is a cuspidal representation of $G_i$, for $1\leq
i\leq r+1$.  Any inertial class $\mathfrak{s}$ of $G$ is equal to $[M,
\sigma_M]$.

Let $K_{M}$ be the group $M\cap K$. Note that $K_{M}$ is a
hyperspecial maximal compact subgroup of $M$.  Let
$\gamma_M$ be a cuspidal representation of $M(k_F)$. Let $\tau_M$ be
a representation of $K_{M}$, obtained as the inflation of $\gamma_M$
via the map
$$\pi_1:K_{M}=M\cap K \rightarrow M(k_F).$$ 
Note that $\tau_M$ extends as a representation of $P(1)$ via inflation
from the map
$$\tilde{\pi}_1: P(1)\xrightarrow{\pi_1} P(k_F)\rightarrow M(k_F).$$
Let $\sigma_M$ be a cuspidal representation of $M$ containing the pair
$(K_{M}, \tau_M)$.
\begin{lemma}
  Let $\mathfrak{s}$ be the inertial class $[M, \sigma_M]_G$. The pair
  $(P(1), \tau_M)$ is an $\mathfrak{s}$-type in the sense of Bushnell
  and Kutzko.
\end{lemma}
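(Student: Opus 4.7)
The plan is to identify $(P(1), \tau_M)$ as a $G$-cover, in the sense of \cite[Section 8]{Smoothrepcptgr}, of the depth-zero $[M, \sigma_M]_M$-type $(K_M, \tau_M)$, and then invoke Bushnell--Kutzko's main theorem on covers to conclude. Since the excerpt already records that $(K_M, \tau_M)$ is an $[M, \sigma_M]_M$-type when $K_M$ is hyperspecial, the whole task reduces to the two cover axioms.

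First I would verify that $(P(1), \tau_M)$ is decomposed with respect to $(P, M)$. From $P \cap K = K_M (K \cap U)$ combined with the Iwahori decomposition $K(1) = (K(1) \cap \bar{U}) K_M^+ (K(1) \cap U)$, an elementary manipulation gives
\[
P(1) = (K(1) \cap \bar{U})\, K_M\, (K \cap U),
\]
so that $P(1) \cap M = K_M$, $P(1) \cap U = K \cap U$, and $P(1) \cap \bar{U} = K(1) \cap \bar{U}$. Because the extension of $\tau_M$ to $P(1)$ is obtained by inflation through $\tilde{\pi}_1 : P(1) \to P(k_F) \to M(k_F)$, the subgroup $K \cap U$ is killed (it lands in the unipotent radical of $P(k_F)$) and $K(1) \cap \bar{U}$ is killed (it is contained in $K(1) = \ker \pi_1$). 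So $\res_{K_M}\tau_M$ is preserved and both unipotent pieces are trivial, confirming the Iwahori-decomposition half of the definition.

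The substantive step is the cover condition: for every parabolic $Q$ of $G$ with Levi component $M$, one must exhibit a strongly $(Q, P(1))$-positive element $z$ in the centre of $M$ together with an invertible element of $\mathcal{H}(G, \tau_M)$ supported on $P(1)\, z\, P(1)$. I would take $z$ in the split component of the centre of $M$ that strictly contracts the positive root groups of $Q$, and then analyse intertwining of $\tau_M$ through $P(1)$ by reducing modulo $K(1)$: any intertwining element of $K$ descends to an intertwining of the inflated $\gamma_M$ in the finite group $K/K(1) \simeq G(\mathcal{L} \otimes k_F, \bar q)$, and the cuspidality of $\gamma_M$ on $M(k_F)$ restricts such intertwining to the image of $M$. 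This controls the support of $\mathcal{H}(G, \tau_M)$ and yields the expected affine Hecke-algebra structure, from which the required invertible element on $P(1)\, z\, P(1)$ drops out.

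The main obstacle lies precisely in this last paragraph: one must control the intertwining of $\tau_M$ on $P(1)$ and identify the resulting Hecke algebra well enough to produce an invertible element on a strongly positive double coset. Once this is established, \cite[Section 8]{Smoothrepcptgr} lifts the $M$-type $(K_M, \tau_M)$ through the cover $(P(1), \tau_M)$ to an $\mathfrak{s}$-type for $[M, \sigma_M]_G$, which is the statement of the lemma.
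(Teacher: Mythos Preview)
Your route differs from the paper's. The paper does not verify the Bushnell--Kutzko cover axioms from scratch; instead it invokes Morris's result \cite[Theorem 4.9]{level_zero_G-types}, which already establishes that the pair $(\hat{P}, \tau_M)$ is an $\mathfrak{s}$-type, where $\hat{P}$ denotes the full stabiliser of the facet attached to the parahoric $P(1)$. The only thing the paper then has to check is the group-theoretic equality $\hat{P}=P(1)$, which it does via Iwahori decomposition of $\hat{P}$ together with the fact that $K_M=P(1)\cap M$ is hyperspecial (hence equal to $\hat{P}\cap M$), while the pro-$p$ groups $\hat{P}\cap U$ and $\hat{P}\cap\bar{U}$ are automatically contained in $P(1)$.

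Your proposal, by contrast, aims to rebuild the substance of Morris's theorem: the Iwahori-decomposition verification you give is fine and matches what the paper needs anyway, but the ``substantive step''---producing a strongly positive central element and an invertible element of $\mathcal{H}(G,\tau_M)$ supported on its double coset---is precisely the content of \cite{level_zero_G-types} in this depth-zero situation. You yourself flag this as the obstacle, and indeed the sketch you give (``cuspidality of $\gamma_M$ restricts intertwining to the image of $M$ \dots\ the required invertible element drops out'') is not a proof but a summary of what Morris's argument accomplishes; carrying it out requires the double-coset and Hecke-algebra analysis of that paper. So your approach is not wrong in principle, but it is incomplete at exactly the nontrivial point, and it duplicates work already available in the literature. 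The paper's approach buys you the result for the price of a single group identification $\hat{P}=P(1)$.
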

\begin{proof}
 This is essentially proved in \cite [Theorem
  4.9]{level_zero_G-types}. However, we have to show that the group
  $P(1)$ coincides with the full normaliser of the facet corresponding
  to the parahoric subgroup $P(1)$; which is denoted by $\hat{P}$ in
  \cite{level_zero_G-types}.  First, we have $P(1)\subseteq
  \hat{P}$. From the Iwahori decomposition of $\hat{P}$ with respect
  to $(P, M)$, we get that
  $$\hat{P}=(\hat{P}\cap U)(\hat{P}\cap M)(\hat{P}\cap \bar{U}).$$
  Since the groups $\hat{P}\cap U$ and $\hat{P}\cap \bar{U}$ are
  pro-$p$ groups, they are contained in $P(1)$. Since $K_M=P(1)\cap M$
  is a hyperspecial maximal compact subgroup, the group $ P(1)\cap M$
  is equal to $\hat{P}\cap M$.  This shows that $\hat{P}=P(1)$. 
\end{proof} In this article, we classify $[M, \sigma_M]_G$-typical
representation of $K$. In particular, we show that the
$[M, \sigma_M]_G$-typical representations of $K$ are exactly the
subrepresentations of $\ind_{P(1)}^K\tau_M$.
\section{The first reduction}
We begin with a few preliminary results.  We will make a mild
modification to the uniqueness result of typical representations
proved for depth-zero inertial classes of ${\rm GL}_n(F)$. The following
lemmas are essentially proved by Pa{\v{s}}k{\=u}nas in
\cite{Paskunas-uniqueness}, but not stated in the form we need.
\begin{lemma}\label{ref_disct}
  Let $G$ be the group of $k_F$-points of a connected reductive group
  over $k_F$. Let $H$ be a subgroup of $G$. Assume that there exists a
  proper parabolic subgroup $P$ of $G$, with unipotent radical $U$
  such that $H\cap U=\{\id\}$. Let $\tau$ be an irreducible
  representation of $G$. For any irreducible subrepresentation
  $\xi$ of $\res_H\tau$, there exists an irreducible non-cuspidal
  $G$-representation $\tau'$ such that $\xi$ occurs as a
  subrepresentation of $\res_H\tau'$.
\end{lemma}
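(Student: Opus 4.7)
The plan is to ignore the particular $\tau$ in the statement and show directly that every irreducible representation $\xi$ of $H$ embeds into $\res_H \tau'$ for some irreducible non-cuspidal $G$-representation $\tau'$. The universal carrier that will do the job is $\mathcal{V}:=\ind_U^G \mathbf{1}\simeq \mathbb{C}[G/U]$; the role of $\tau$ in the statement is only to fix the $\xi$ under consideration.

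The first step uses the hypothesis $H\cap U=\{\id\}$: the coset $eU\in G/U$ has trivial $H$-stabiliser, so its $H$-orbit is isomorphic as an $H$-set to $H$ acting on itself by left translation. Consequently $\res_H \mathcal{V}$ contains a copy of the regular representation $\mathbb{C}[H]$, and hence contains every irreducible representation of $H$; in particular, $\xi\hookrightarrow \res_H \mathcal{V}$.

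The second step observes, by transitivity of induction, that $\mathcal{V}\simeq \ind_P^G(\rho)$, where $\rho$ is the inflation to $P$ of the regular representation $\mathbb{C}[M]$ of the Levi $M\simeq P/U$. Decomposing $\mathbb{C}[M]=\bigoplus_\sigma \sigma^{\oplus\dim\sigma}$ into irreducibles of $M$, every irreducible constituent of $\mathcal{V}$ occurs as a constituent of some parabolic induction $i_P^G \sigma$ from the proper parabolic $P$, and is therefore non-cuspidal by definition. By complete reducibility over $\mathbb{C}$, write $\mathcal{V}=\bigoplus_i \tau_i'$ with each $\tau_i'$ irreducible and non-cuspidal; then the embedding $\xi\hookrightarrow\bigoplus_i \res_H \tau_i'$ forces $\xi\hookrightarrow \res_H \tau_i'$ for some $i$, giving the desired $\tau'$. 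The argument is straightforward; the only substantive input is the free $H$-orbit observation, enabled by the transversality hypothesis $H\cap U=\{\id\}$.
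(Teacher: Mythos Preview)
Your proof is correct and is essentially the Frobenius-adjoint of the paper's argument: the paper shows $\ho_U(\ind_H^G\xi,\id)\neq 0$ via Mackey on the identity double coset (using $H\cap U=\{\id\}$) and then picks a non-cuspidal irreducible constituent of $\ind_H^G\xi$, whereas you establish the equivalent statement $\ho_H(\xi,\res_H\ind_U^G\id)\neq 0$ and pick a constituent of $\ind_U^G\id$. Both reduce to the same Mackey computation on the identity $(H,U)$-double coset, so the approaches coincide up to which side of the adjunction one starts from.
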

\begin{proof}
Using Mackey decomposition, we observe that the space
$$\ho_U(\ind_H^G\xi, \id)$$ is non-trivial. Therefore, there exists an
irreducible non-cuspidal $G$-subrepresentation $\tau'$ of
$\ind_H^G\xi$. Frobenius reciprocity implies that $\xi$ occurs in the
irreducible non-cuspidal representation $\tau'$ of $G$. 
\end{proof}

 For simplicity until the end of
Lemmas \ref{unicity_gln} and \ref{unicity_gln_noncusp}, we denote the
group ${\rm GL}_n(F)$ by $G_n$ and the group
${\rm GL}_n(\mathfrak{o}_F)$ by $K_n$.
 \begin{lemma}\label{unicity_gln}
   Let $n>1$, and let $\mathfrak{s}=[G_n, \sigma]_{G_n}$ be a
   depth-zero inertial class.  The representation $\res_{K_n}\sigma$
   admits a decomposition:
   $$\res_{K_n}\sigma=\tau\oplus \tau'$$ such that 
   $\tau$ is an $\mathfrak{s}$-typical representation of $K_n$, and any
   irreducible $K_n$-subrepresentation $\xi$ of $\tau'$ occurs in
   $\res_{K_n}\pi_\xi$, for some irreducible non-cuspidal
   representation $\pi_\xi$ of $G$.
  \end{lemma}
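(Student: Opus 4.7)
The plan is to take $\tau$ to be the $K_n^+$-invariant subspace $\sigma^{K_n^+}\subset \sigma|_{K_n}$, take $\tau'$ to be a $K_n$-stable complement (which exists by complete reducibility for the compact group $K_n$), and then verify the two required properties separately. Declaring $\tau$ in this way makes the first property immediate: since $\sigma$ is depth-zero cuspidal and $K_n$ is hyperspecial, Moy--Prasad theory implies that $\sigma^{K_n^+}$ is the inflation to $K_n$ of a cuspidal representation $\bar\tau_0$ of $K_n/K_n^+\cong \mathrm{GL}_n(k_F)$ (possibly with multiplicity), and the pair $(K_n,\bar\tau_0)$ is an $\mathfrak{s}$-type in the Bushnell--Kutzko sense, cf.\ Morris \cite{level_zero_G-types}. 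Every irreducible summand of $\tau$ is therefore $\mathfrak{s}$-typical, and $\sigma|_{K_n}=\tau\oplus\tau'$ as $K_n$-representations.

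The substantive step is to show, for each irreducible $\xi\subset \tau'$, the existence of a non-cuspidal irreducible $\pi_\xi$ containing $\xi$. Since $\xi^{K_n^+}=0$, $\xi$ has positive depth, and I would derive the assertion from Pa{\v{s}}k{\=u}nas' analysis in \cite{Paskunas-uniqueness}. Concretely, using the realisation $\sigma\cong \cind_{F^\times K_n}^{G_n}\tilde\tau_0$, where $\tilde\tau_0$ extends the inflation of $\bar\tau_0$ by a suitable central character, and applying Mackey's formula, the contributions to $\sigma^{K_n^+}$ come from the double coset $F^\times K_n\cdot K_n$, whereas the other contributions come from double cosets $F^\times K_n\, g\, K_n$ with $g\notin F^\times K_n$. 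The Cartan decomposition of $G_n$ allows me to pick representatives $g$ so that the intersection $K_n\cap g^{-1}F^\times K_n g$ is pulled into the preimage of a proper parabolic $\bar P\subset \mathrm{GL}_n(k_F)$. A finite-group argument in the spirit of Lemma \ref{ref_disct}, applied to $K_n/K_n^+$ and $\bar P$, then shows that $\xi$ embeds into a parabolically induced $G_n$-representation $i_{P}^{G_n}\rho$ with $M\neq G_n$. Taking an irreducible subquotient of $i_P^{G_n}\rho$ whose restriction to $K_n$ still contains $\xi$ yields the required non-cuspidal $\pi_\xi$.

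The main obstacle is the double coset and stratum bookkeeping in the second step: one must verify that \emph{every} irreducible constituent of $\tau'$ (not just those that are atypical in Pa{\v{s}}k{\=u}nas' original dichotomy) is accounted for by some proper parabolic of $\mathrm{GL}_n(k_F)$ at the level of $K_n/K_n^+$. This is precisely the positive-depth analysis driving Pa{\v{s}}k{\=u}nas' unicity theorem for depth-zero cuspidal types of $\mathrm{GL}_n$; once it is in hand, the direct sum statement of the lemma follows by simply collecting the $K_n^+$-fixed constituents into $\tau$ and the remaining ones into $\tau'$.
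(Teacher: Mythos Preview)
Your approach is correct and is essentially the same as the paper's. Both arguments realise $\sigma$ as $\cind_{F^\times K_n}^{G_n}\tilde\tau_0$, apply Mackey with Cartan representatives, take $\tau$ to be the identity-coset summand (which coincides with $\sigma^{K_n^+}$, so your description and the paper's agree), and handle each non-identity summand by observing that the image of $K_n\cap t^{-1}K_nt$ in $\mathrm{GL}_n(k_F)$ lies in a proper parabolic $Q$, then invoking Lemma \ref{ref_disct} with $U$ the unipotent radical of an opposite of $Q$.

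Two minor points worth tightening. First, for $\mathrm{GL}_n$ the space $\sigma^{K_n^+}$ is the inflation of $\bar\tau_0$ with multiplicity exactly one, so your parenthetical ``possibly with multiplicity'' is unnecessary and in fact would conflict with the lemma's wording that $\tau$ is \emph{an} $\mathfrak{s}$-typical representation. Second, the passage from Lemma \ref{ref_disct} (a statement about $\mathrm{GL}_n(k_F)$) to ``$\xi$ embeds in $i_P^{G_n}\rho$'' deserves one more sentence: the paper makes this explicit by inflating the non-cuspidal $\gamma$ of $\mathrm{GL}_n(k_F)$ to $K_n$, embedding it in $\ind_{Q(1)}^{K_n}\tau_N$ for $\tau_N$ the inflation of a cuspidal of the Levi, and then using that $(Q(1),\tau_N)$ is a Bushnell--Kutzko type for a non-cuspidal component, so every irreducible subquotient of $\ind_{Q(1)}^{G_n}\tau_N$ is non-cuspidal. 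Your sketch skips this inflation-and-type step. Finally, your closing worry about ``positive-depth analysis'' and ``stratum bookkeeping'' is unfounded: the whole argument takes place at depth zero, and no stratum theory beyond the mod-$\mathfrak{p}_F$ reduction is needed.
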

  \begin{proof}
    The representation $\sigma$ is an unramified twist of the
    representation $\ind_{F^\times {K_n}}^{G_n}\tau$, where $\tau$ is
    a representation of $F^\times K_n$ such that: $\res_{K_n}\tau$ is
    obtained by inflation of a cuspidal representation of
    ${\rm GL}_n(k_F)$, and $\varpi_F$ acts trivially on $\tau$. Using
    Cartan decomposition for the group $G_n$, the representatives for
    the double cosets $F^\times {K_n}\backslash G_n/{K_n}$ are given
    by the elements of the form
    $\diag(\varpi_F^{i_1}, \dots, \varpi_F^{i_n})$, where
    $i_1\geq \cdots\geq i_n\geq 0$.  Now
  $$\res_{K_n}\sigma\cong\bigoplus_{t\in {K_n}\backslash {\rm GL}_n(F)/{K_n}}
  \ind_{{K_n}\cap t{K_n}t^{-1}}^{K_n}\tau.$$ Assume $t\neq \id$. Let
  $H$ be the image of the group ${K_n}\cap t{K_n}t^{-1}$ under the
  reduction map $\pi_1:{K_n}\rightarrow {\rm GL}_n(k_F)$. The group
  $H$ is contained in a proper parabolic subgroup $Q$ of
  ${\rm GL}_n(k_F)$.

Let $U$ be the unipotent radical of an opposite
  parabolic subgroup of $Q$. Note that $H\cap U$ is the trivial
  group. Let $\xi$ be an irreducible $H$-subrepresentation of
  $\tau$. Using Lemma \ref{ref_disct}, we get that $\xi$ occurs as a
  subrepresentation of $\res_H \gamma$, where $\gamma$ is a non-cuspidal
  irreducible representation of ${\rm GL}_n(k_F)$. This implies that
  any irreducible subrepresentation of
  $\res_{{K_n}\cap t{K_n}t^{-1}}\tau $ occurs as a subrepresentation
  of $\res_{{K_n}\cap t{K_n}t^{-1}}\tau'$ where $\tau'$ is the
  inflation of $\gamma$. This shows that any $K_n$-irreducible
  subrepresentation  of $\ind_{{K_n}\cap t{K_n}t^{-1}}^{K_n}\tau$
  occurs in $\ind_{{K_n}\cap t{K_n}t^{-1}}^{K_n}\tau'$, for some
  $\tau'$ as above. 

  The representation $\ind_{{K_n}\cap t{K_n}t^{-1}}^{K_n}\tau'$ is a
  subrepresentation of $\res_{K_n}\ind_{K_n}^G\tau'$. Let $Q(1)$ be a
  subgroup of $K_n$, obtained as the inverse image of $Q$ via the map
  $\pi_1:K_n\rightarrow \g{n}{k_F}$. Let $N$ be a Levi factor of $Q$.
  The representation $\gamma$ is a subrepresentation of
  $i_Q^{\g{n}{k_F}}\gamma_N$, where $\gamma_N$ is a cuspidal
  representation of $N$. Let $\tau_N$ be the representation of $Q(1)$
  obtained by inflation of $\gamma_N$ via the map $\pi_1:Q(1)\rightarrow Q$.
  The representation $\ind_{K_n}^G\tau'$ is a subrepresentation of
  $\ind_{Q(1)}^G\tau_N$. Any irreducible $G$-subquotient of
  $\ind_{Q(1)}^G\tau_N$ is a non-cuspidal representation (see
  \cite[chapter 8]{Orrangebook}). This shows that irreducible
  subrepresentations of $\ind_{{K_n}\cap t{K_n}t^{-1}}^{K_n}\tau'$
  occur in the restriction to $K_n$ of a non-cuspidal representation
  of $G$.
  \end{proof}
\begin{lemma}\label{unicity_gln_noncusp}
  Let $\mathfrak{s}=[M, \sigma]_{G_n}$ be a depth-zero non-cuspidal
  inertial class. Let $P$ be a parabolic subgroup of
  $G$ such that $M$ is a Levi factor of $P$.  The representation
  $\res_{K_n}i_P^{G_n}\sigma$ admits a
  decomposition $$\res_{K_n}i_P^{G_n}\sigma=\tau\oplus \tau'$$ such
  that  any irreducible $K_n$-subrepresentation of $\tau$ is
  $\mathfrak{s}$-typical, and any irreducible $K_n$-subrepresentation
  of $\tau'$ is atypical. Moreover, any irreducible
  $K_n$-subrepresentation of $\tau'$ occurs as a subrepresentation of
  $\res_{K_n}i_{R}^{G_n}\sigma_1$ such that $P$ and $R$ are not
  associate parabolic subgroups.
\end{lemma}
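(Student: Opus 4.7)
The plan is to combine the Iwasawa decomposition for $G_n$ with Lemma \ref{unicity_gln} applied in each block of $\sigma \simeq \sigma_1 \boxtimes \cdots \boxtimes \sigma_r$ on $M \simeq G_{n_1} \times \cdots \times G_{n_r}$, and then to run the non-associate-parabolic argument from the cuspidal case.

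First, since $G_n = P \cdot K_n$ and $\delta_P^{1/2}$ is trivial on the compact subgroup $P \cap K_n$, the Iwasawa identity gives
\begin{equation*}
\res_{K_n} i_P^{G_n} \sigma \simeq \ind_{P \cap K_n}^{K_n}\bigl(\sigma|_{P \cap K_n}\bigr),
\end{equation*}
with $\sigma$ on $P \cap K_n$ extended trivially along $U \cap K_n$ via the projection $P \cap K_n \twoheadrightarrow K_M := K_{n_1} \times \cdots \times K_{n_r}$. Now I apply Lemma \ref{unicity_gln} to each cuspidal $\sigma_i$ with $n_i \geq 2$ (setting $\tau_i' = 0$ when $n_i = 1$) to write $\res_{K_{n_i}} \sigma_i = \tau_i \oplus \tau_i'$. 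External-tensoring and distributing over the summands yields $\res_{K_M} \sigma = \lambda_M \oplus \lambda_M'$, where $\lambda_M := \tau_1 \boxtimes \cdots \boxtimes \tau_r$ and $\lambda_M'$ is the sum of all mixed tensor products containing at least one $\tau_j'$-factor. Inflating to $P \cap K_n$ and inducing to $K_n$ then produces the claimed decomposition $\res_{K_n} i_P^{G_n} \sigma = \tau \oplus \tau'$.

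For the typical part $\tau = \ind_{P \cap K_n}^{K_n} \tilde\lambda_M$: the pair $(P \cap K_n, \tilde\lambda_M)$ is a level-zero $G_n$-cover of the depth-zero type $(K_M, \lambda_M)$ for $[M,\sigma]_M$, by the level-zero-types construction for $\operatorname{GL}_n$ of \cite{level_zero_gl_n_types}; Frobenius reciprocity therefore forces every irreducible $K_n$-subrepresentation of $\tau$ to be $\mathfrak{s}$-typical. For the remaining part $\tau'$ I mimic the cuspidal argument: for any mixed summand of $\lambda_M'$ with some $\tau_i'$-factor, Lemma \ref{unicity_gln} realises each irreducible subrepresentation of $\tau_i'$ inside $\res_{K_{n_i}} i_{Q_i}^{G_{n_i}} \gamma_{N_i}$ for a proper parabolic $Q_i \subset G_{n_i}$ with cuspidal Levi datum $(N_i, \gamma_{N_i})$. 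Combining with the Iwasawa identity for $G_n$ and transitivity of parabolic induction places every irreducible $K_n$-subrepresentation of $\tau'$ inside $\res_{K_n} i_R^{G_n}\bigl(\sigma_1 \boxtimes \cdots \boxtimes \gamma_{N_i} \boxtimes \cdots \boxtimes \sigma_r\bigr)$ for a parabolic $R \subset G_n$ whose Levi strictly refines $M$. Because the block-size multiset of such a refined Levi differs from that of $M$, the parabolics $R$ and $P$ are not $G_n$-associate, so these subrepresentations sit in a Bernstein block distinct from $\mathfrak{s}$; their simultaneous appearance in $\res_{K_n} i_P^{G_n} \sigma \in \mathcal{R}_\mathfrak{s}$ then rules out typicality for any single inertial class.

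The main obstacle is the bookkeeping in the last step: assembling the per-factor witnesses $(Q_i, \gamma_{N_i})$ into a single $G_n$-parabolic $R$ via iterated parabolic induction, and verifying that the resulting Levi, obtained by strictly refining the $i$-th block of $M$, is genuinely non-conjugate to $M$ in $G_n$; this is what upgrades non-$\mathfrak{s}$-typicality to atypicality and yields the last assertion of the statement.
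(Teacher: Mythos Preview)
There is a genuine gap in your argument. The claim that $(P\cap K_n,\tilde\lambda_M)$ is a level-zero $G_n$-cover of $(K_M,\lambda_M)$ is false: a $G_n$-cover must have Iwahori decomposition with respect to $(P,M)$, whereas $P\cap K_n$ meets $\bar U$ trivially. The correct cover is $(P(1),\tilde\lambda_M)$ with $P(1)=(P\cap K_n)K_n(1)$, and the corresponding type induction $\ind_{P(1)}^{K_n}\tau_M$ is a \emph{proper} subrepresentation of your $\tau=\ind_{P\cap K_n}^{K_n}\tilde\lambda_M$. Concretely, $\ind_{P\cap K_n}^{K_n}\tilde\lambda_M=\bigcup_{m\geq 1}\ind_{P(m)}^{K_n}\tilde\lambda_M$, and the entire content of \cite[Theorem~3.2]{level_zero_gl_n_types} (mirrored in this paper by the analysis of Section~\ref{sec_auxiliary} and Theorem~\ref{classification theorem}) is precisely that the successive quotients $\ind_{P(m+1)}^{K_n}\tilde\lambda_M/\ind_{P(m)}^{K_n}\tilde\lambda_M$ for $m\geq 1$ consist of atypical representations. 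So your $\tau$ is too large: it contains irreducible $K_n$-subrepresentations that are not $\mathfrak{s}$-typical, and your Frobenius-reciprocity sentence does not apply because the pair you wrote down is not an $\mathfrak{s}$-type.

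What your argument actually establishes is only the easy half, namely that every $\mathfrak{s}$-typical constituent of $\res_{K_n}i_P^{G_n}\sigma$ lies inside $\ind_{P\cap K_n}^{K_n}\tilde\lambda_M$ (this is essentially the content of Lemma~\ref{conley_lemma} specialised to $\g{n}{F}$). The hard half---cutting $\ind_{P\cap K_n}^{K_n}\tilde\lambda_M$ down to $\ind_{P(1)}^{K_n}\tau_M$---requires the Clifford-theoretic orbit analysis carried out in \cite{level_zero_gl_n_types}, and cannot be obtained just by tensoring the blockwise decompositions from Lemma~\ref{unicity_gln}. The paper itself does not reprove this; it simply invokes \cite[Theorem~3.2]{level_zero_gl_n_types} and notes that the non-associate-parabolic assertion is visible in that proof.
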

\begin{proof}
  The first part of the lemma is proved in \cite[Theorem
  3.2]{level_zero_gl_n_types}. The last assertion follows from the
  proof of the result \cite[Theorem 3.2]{level_zero_gl_n_types}.  Note
  that there are no assumptions on $q_F$ in the proof of this lemma.
\end{proof}

Let $K$ be any hyperspecial maximal compact subgroup of $G$. We need
the uniqueness of $\mathfrak{s}$-typical representations of $K$ for
the inertial class $[G, \sigma]$, where $\sigma$
contains a depth-zero type of the form $(K, \lambda)$. We only give a
sketch of the following standard lemma for the completeness of the
exposition. This result is generalised by Latham for arbitrary maximal
compact subgroups and depth-zero cuspidal Bernstein components of an
wide class of reductive groups $G$ (see \cite{latham_2}).
\begin{lemma}\label{unicity_classical_cusps}
  The $K$-representation $\lambda$, is the
  unique $[G, \sigma]_G$-typical representation contained in
  $\sigma$. 
\end{lemma}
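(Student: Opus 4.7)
The plan is to adapt the Mackey decomposition argument of Lemma \ref{unicity_gln} to our classical setting. Since $\sigma$ is a depth-zero cuspidal representation of $G$ containing the Moy--Prasad type $(K,\lambda)$ at a hyperspecial vertex, I would first write
\[
\sigma\;\cong\;\ind_{\tilde K}^G\tilde\lambda,
\]
where $\tilde K$ is the compact open normaliser of $K$ in $G$ (compact because $Z(G)$ is compact in our split classical groups) and $\tilde\lambda$ is an irreducible representation of $\tilde K$ satisfying $\tilde\lambda|_K=\lambda$. By Mackey's formula,
\[
\res_K\sigma\;\cong\;\bigoplus_{t\in\tilde K\backslash G/K}\ind_{K\cap t^{-1}\tilde K t}^K\bigl(\tilde\lambda^t\bigr).
\]
The double coset of $t=1$ contributes $\tilde\lambda|_K=\lambda$; being (the restriction of) an $\mathfrak{s}$-type, $\lambda$ is automatically $\mathfrak{s}$-typical.

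For uniqueness, I would argue that every irreducible $K$-subrepresentation arising from a double coset with $t\notin\tilde K$ fails to be $\mathfrak{s}$-typical. Using the Cartan decomposition for $G$, such a representative may be taken of the form $t=\mu(\varpi_F)$ for a non-zero dominant cocharacter $\mu$ of a maximal split torus contained in $K$. A building-theoretic computation, tracing through the splitting of the self-dual lattice chain $\Lambda$ fixed in Section \ref{preli}, then shows that the image of $H_t:=K\cap t^{-1}\tilde K t$ under $\pi_1\colon K\to K/K(1)$ is contained in a proper parabolic subgroup $\mathsf{P}_\mu$ of the finite reductive quotient, and in particular $H_t$ meets trivially the unipotent radical of an opposite parabolic.

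Let $\xi$ be any irreducible $H_t$-subrepresentation of $\tilde\lambda^t|_{H_t}$; since $\xi$ factors through $\pi_1(H_t)\subseteq\mathsf{P}_\mu$, Lemma \ref{ref_disct} produces a non-cuspidal irreducible representation $\bar\gamma$ of $K/K(1)$ such that $\xi\hookrightarrow\res_{\pi_1(H_t)}\bar\gamma$. Writing $\gamma$ for the inflation of $\bar\gamma$ to $K$, Frobenius reciprocity (exactly as in the proof of Lemma \ref{unicity_gln}) implies that every irreducible $K$-subrepresentation of $\ind_{H_t}^K\tilde\lambda^t$ embeds into $\ind_{H_t}^K\gamma$, hence into $\res_K\ind_K^G\gamma$. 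Choosing a proper parabolic $\mathsf{Q}_0$ of $K/K(1)$ with Levi $\mathsf{M}_0$ and a cuspidal $\bar\gamma_0$ of $\mathsf{M}_0$ such that $\bar\gamma\hookrightarrow i_{\mathsf{Q}_0}^{K/K(1)}\bar\gamma_0$, inflation to the parahoric $Q_0(1)\subseteq K$ and transitivity of induction place our $K$-subrepresentation inside $\res_K\, i_{Q}^G\sigma_Q$ for a proper parabolic $Q$ of $G$ with Levi $M_Q$ and a depth-zero cuspidal $\sigma_Q$ of $M_Q$. Such a representation belongs to $\mathcal{R}_{[M_Q,\sigma_Q]}(G)\ne\mathcal{R}_{\mathfrak{s}}(G)$, so it cannot be $\mathfrak{s}$-typical.

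Combining the two cases, the only irreducible $\mathfrak{s}$-typical $K$-subrepresentation of $\sigma$ is $\lambda$, as claimed. The main obstacle is the building-theoretic step verifying that $\pi_1(H_t)$ lies in a proper parabolic of $K/K(1)$ whenever $t\notin\tilde K$; care is needed to absorb the finite-index factor $[\tilde K:K]$, which can be non-trivial for $\mathrm{SO}_{2n}$ due to diagram automorphisms. The remaining steps are direct transcriptions of the $\mathrm{GL}_n$-argument given in Lemma \ref{unicity_gln}.
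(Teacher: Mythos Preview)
Your overall strategy mirrors the paper's, but there is a genuine gap at the step where you write ``since $\xi$ factors through $\pi_1(H_t)$''. This assertion is false in general. The representation $\tilde\lambda$ is trivial on $K(1)$, so its conjugate $\tilde\lambda^t$ is trivial on $t^{-1}K(1)t$ (or $tK(1)t^{-1}$, depending on your convention), \emph{not} on $K(1)$. Hence $\tilde\lambda^t|_{H_t}$ is trivial on $H_t\cap t^{-1}K(1)t$, which is typically strictly smaller than $H_t\cap K(1)=\ker(\pi_1|_{H_t})$. Concretely, for $G=\mathrm{SL}_2$, $K=\mathrm{SL}_2(\mathfrak{o}_F)$, $t=\mathrm{diag}(\varpi_F,\varpi_F^{-1})$, the group $H_t\cap K(1)$ contains matrices with lower-left entry in $\mathfrak{p}_F^2\setminus\mathfrak{p}_F^3$, on which $\lambda^t$ is non-trivial whenever the underlying cuspidal representation of $\mathrm{SL}_2(k_F)$ is non-trivial on lower unipotents. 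So you cannot invoke Lemma~\ref{ref_disct} for $\tilde\lambda^t|_{H_t}$.

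The paper avoids this by applying Lemma~\ref{ref_disct} on the \emph{other} side of the conjugation: it restricts the unconjugated $\lambda$ to $K^{t^{-1}}\cap K\subset K$, where level-zeroness is immediate, and observes that this group lies in some $P(1)$ so that the lemma applies. One then transports the conclusion back through conjugation by $t$ to the Mackey summand $\ind_{K^t\cap K}^K\lambda^t$, and finally embeds into $\res_K\ind_{R(1)}^G\tau'$, whose irreducible subquotients are all non-cuspidal. Your argument can be repaired by this same manoeuvre: analyse $\lambda|_{K\cap t\tilde Kt^{-1}}$ rather than $\tilde\lambda^t|_{H_t}$. Incidentally, the detour through $\tilde K$ is unnecessary here; the paper works directly with $\sigma\cong\ind_K^G\lambda$.
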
 
\begin{proof}
The representation $\sigma$ is isomorphic to $\ind_K^G\lambda$. Now 
$$\res_K\ind_{K}^G\lambda\simeq \bigoplus_{g\in K\backslash G/ K}
\ind_{K^g\cap K}^K\lambda^g.$$ Assume that $g\notin K$.  Observe that
Cartan decomposition for $K\backslash G/K$ gives a representative
$t\in KgK$ such that $K^{t^{-1}}\cap K\subset P(1)$, for some proper
standard parabolic subgroup $P$ of $G$.  Using Lemma \ref{ref_disct},
we get that any irreducible subrepresentation $\xi$ of
$$\res_{K^{t^{-1}}\cap K}\lambda$$
occurs as a subrepresentation of $\res_{K^{t^{-1}}\cap
  K}\ind_{R(1)}^K\tau'$, where $\tau'$ is the inflation of a cuspidal
representation $\gamma$ of $L(k_F)$, the standard Levi factor of
$R(k_F)$, via the map
$$R(1)\rightarrow R(k_F)\rightarrow L(k_F).$$

Hence, any irreducible representation of $\ind_{K^g\cap K}^K\lambda^g$
occurs as a subrepresentation of $$\res_K\ind_{R(1)}^G\tau'.$$
The pair $(R(1), \tau')$ is a type for the Bernstein component $[L,
\sigma_L]$, where $\sigma_L$ is any cuspidal representation of $L$
containing the type $(K\cap L, \tau')$. Now any irreducible
$G$-subquotients of $\ind_{R(1)}^G\tau'$ are non-cuspidal. Hence the
irreducible subrepresentations of $\ind_{K^g\cap K}^K\lambda^g$ are
atypical. 
\end{proof}

Consider a standard parabolic subgroup $P$ with the standard Levi factor
$M$ isomorphic to
$$G_1\times G_2\times \cdots\times G_{r+1},$$
where $G_i$ is the group of $F$-points of a general linear group over
$F$, for $i\leq r$, and $G_{r+1}$ is the group of $F$-points of the
connected component of the isometry subgroup of a non-singular
subspace $(W', q)$ of $(W, q)$.  The factor $G_{r+1}$ is assumed to be
trivial if $M$ is contained in a maximal parabolic subgroup fixing a
maximal totally isotropic flag.  Let
$\mathfrak{t}_i=[M_i, \sigma_i]_{G_i}$ be an inertial class of $G_i$,
for $i\leq r$ and $\mathfrak{t}_{r+1}=[G_{r+1}, \sigma_{r+1}]$ be a
cuspidal inertial class of $G_{r+1}$.

We assume that $\mathfrak{t}_i$ is a depth-zero inertial class of
$G_i$, for $1\leq i\leq r$. We assume that $\sigma_{r+1}$ contains a
depth-zero type $(K\cap G_{r+1}, \lambda)$. Let $P_i$ be an
$F$-parabolic subgroup of $G_i$ with $M_i$ as a Levi factor, and let
\begin{equation}\label{basic_decomp}
\res_{K\cap G_i}i_{P_i}^{G_i}\sigma_i=\tau_i\oplus \tau_i'
\end{equation}
such that: any $K\cap G_i$-irreducible subrepresentation of $\tau_i'$
is atypical, $\tau_i\neq 0$, and any $K\cap G_i$-subrepresentations
of $\tau_i$ is $\mathfrak{t}_i$-typical. Such a decomposition is
possible by Lemmas \ref{unicity_gln} and \ref{unicity_gln_noncusp}, for
$i\leq r$, and for $G_{r+1}$ from the Lemma
\ref{unicity_classical_cusps}.

Let $\mathfrak{s}$ be the inertial class $[L, \sigma_L]_G$, where
$L\subset M$, is a standard Levi factor of a standard parabolic
subgroup such that
$$L\simeq M_1\times\cdots\times M_r\times G_{r+1},$$ 
and $\sigma_L$ is isomorphic to
$\sigma_1\boxtimes\cdots\boxtimes\sigma_r\boxtimes\sigma_{r+1}$.  We
denote by $\tau_{M}$ the $K \cap M$-representation
$$\tau_1\boxtimes\tau_2\boxtimes\cdots\boxtimes\tau_{r+1}.$$
Let $R$ be a standard parabolic subgroup such that $L$ is the standard
Levi factor of $R$.  Let $\tau'_{M}$ be the representation
$\ind_{R\cap M}^M\sigma_L/\tau_M$.  With these notations, we have the
following preliminary classification of $\mathfrak{s}$-typical
representations of $K$. 
\begin{lemma}\label{conley_lemma}
  Let $\mathfrak{s}$ be the inertial class $[L, \sigma_L]_G$. Any
  $\mathfrak{s}$-typical representation $\tau$ of $K$ occurs as a
  subrepresentation of $\ind_{K\cap P}^K\tau_M$.
\end{lemma}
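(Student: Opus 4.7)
The plan is to produce a concrete $G$-representation in $\mathcal{R}_\mathfrak{s}(G)$ containing $\tau$, decompose its $K$-restriction by Iwasawa/Mackey, and then extract the typical summand. First, $\ind_K^G\tau$ is finitely generated, so it admits an irreducible quotient $\pi$; by Frobenius reciprocity $\tau\hookrightarrow\res_K\pi$, and by $\mathfrak{s}$-typicality $\pi\in\mathcal{R}_\mathfrak{s}(G)$. The cuspidal support of $\pi$ lies in the $G$-orbit of $(L,\sigma_L)$, so Bernstein--Zelevinsky's independence-of-parabolic shows that $\pi$ is a composition factor of $i_R^G(\sigma_L\otimes\chi)$ for the standard parabolic $R=(R\cap M)U_P\subset P$ and some unramified character $\chi$ of $L$. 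Induction in stages rewrites this as $i_P^G(\pi_M\otimes\chi)$, where $\pi_M:=i_{R\cap M}^M\sigma_L$, and since $\chi$ is trivial on the compact group $K\cap L$, the $K$-restrictions of $i_P^G(\pi_M\otimes\chi)$ and $i_P^G\pi_M$ agree.

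Next, the Iwasawa decomposition $G=PK$ identifies $\res_K i_P^G\pi_M\simeq\ind_{K\cap P}^K\res_{K\cap M}\pi_M$, with $\res_{K\cap M}\pi_M$ inflated trivially across $K\cap U_P$. Because $\pi_M$ is an external tensor product along $M=G_1\times\cdots\times G_{r+1}$ (of the $i_{P_i}^{G_i}\sigma_i$, together with $\sigma_{r+1}$ on the last factor), its $K\cap M$-restriction factors accordingly; applying the decompositions \eqref{basic_decomp} for $i\le r$ and Lemma \ref{unicity_classical_cusps} for $i=r+1$, one obtains $\res_{K\cap M}\pi_M=\tau_M\oplus\tau_M'$, where $\tau_M'$ is the direct sum over all tuples $(\alpha_1,\ldots,\alpha_{r+1})$ having at least one coordinate $\alpha_i=\tau_i'$. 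Chaining the inclusions yields $\tau\hookrightarrow\ind_{K\cap P}^K\tau_M\oplus\ind_{K\cap P}^K\tau_M'$; as $\tau$ is irreducible, the task reduces to excluding $\tau\hookrightarrow\ind_{K\cap P}^K\tau_M'$.

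The main obstacle is this exclusion. Supposing $\tau\hookrightarrow\ind_{K\cap P}^K\xi$ for an irreducible $K\cap M$-subrepresentation $\xi=\xi_1\boxtimes\cdots\boxtimes\xi_{r+1}$ of $\tau_M'$, set $J=\{i:\xi_i\subset\tau_i'\}\neq\emptyset$ and pick, for each $i$, an irreducible $G_i$-representation $\pi_i$ with $\xi_i\hookrightarrow\res_{K\cap G_i}\pi_i$. Lemmas \ref{unicity_gln}, \ref{unicity_gln_noncusp}, and \ref{unicity_classical_cusps} permit, for each $j\in J$, the cuspidal-support Levi $L_j$ of $\pi_j$ to fail to be $G_j$-conjugate to $M_j$ (with $L_{r+1}\subsetneq G_{r+1}$ when $r+1\in J$), while for $i\notin J$ the typicality of $\xi_i\subset\tau_i$ forces $\pi_i\in\mathcal{R}_{\mathfrak{t}_i}(G_i)$ and hence $L_i\sim_{G_i}M_i$. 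Setting $\pi_M^*=\pi_1\boxtimes\cdots\boxtimes\pi_{r+1}$, we obtain $\tau\hookrightarrow\res_K i_P^G\pi_M^*$, so $\tau$ embeds in some irreducible composition factor $\pi^*$ of the finite-length module $i_P^G\pi_M^*$; by $\mathfrak{s}$-typicality $\pi^*\in\mathcal{R}_\mathfrak{s}(G)$, so the cuspidal support $[L^*,\sigma^*]_G$ with $L^*=L_1\times\cdots\times L_{r+1}$ must equal $[L,\sigma_L]_G$. The heart of the argument is then a Weyl-group analysis: because the relative Weyl group $W(G,M)$ only permutes GL-factors of equal rank and fixes the classical factor $G_{r+1}$, if $r+1\in J$ the classical components ($L_{r+1}\subsetneq G_{r+1}$ versus $G_{r+1}$) already fail to match; otherwise, any permutation $\sigma\in W(G,M)$ realizing the equivalence of cuspidal supports must, when traced through its cycles using $L_k\sim_{G_k}M_k$ at the non-$J$ positions, propagate a shape-matching condition back around the cycle that ultimately forces $L_j\sim_{G_j}M_j$ for some $j\in J$, contradicting the choice of $\pi_j$. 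Hence $\tau\hookrightarrow\ind_{K\cap P}^K\tau_M'$ is impossible, and we conclude $\tau\hookrightarrow\ind_{K\cap P}^K\tau_M$.
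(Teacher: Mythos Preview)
Your overall strategy matches the paper's: realise $\tau$ inside a parabolic induction, split $\res_{K\cap M}\pi_M=\tau_M\oplus\tau_M'$ using the factorwise decompositions \eqref{basic_decomp}, and show that every irreducible constituent of $\ind_{K\cap P}^K\tau_M'$ is atypical. The divergence, and a genuine gap, is in this last step.

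For a fixed irreducible $\xi=\xi_1\boxtimes\cdots\boxtimes\xi_{r+1}\subset\tau_M'$ you modify \emph{every} coordinate in $J$ simultaneously, producing $L^*=L_1\times\cdots\times L_{r+1}$ with $L_j\not\sim_{G_j}M_j$ for each $j\in J$. But this does not force $L^*\not\sim_G L$. Take $r=2$, $G_1=G_2={\rm GL}_2$, $M_1=G_1$ (cuspidal factor), $M_2={\rm GL}_1\times{\rm GL}_1$; with $J=\{1,2\}$ the lemmas may give $L_1={\rm GL}_1\times{\rm GL}_1$ and $L_2={\rm GL}_2$, so $L^*$ and $L$ both have block type $\{2,1,1\}$ and are $G$-conjugate. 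You would then need the representation-level comparison to fail, but Lemmas~\ref{unicity_gln} and~\ref{unicity_gln_noncusp} give no control over the cuspidal data $\gamma_j$: nothing prevents $\gamma_2$ from being an unramified twist of $\sigma_1^{\pm1}$ in the example. Your ``cycle'' argument invokes $W(G,M)$, but that is the wrong group---the relevant identification is of $L$ and $L^*$ as Levi subgroups of $G$, and the matching of their blocks need not respect the $G_i$-factor grouping at all. The argument as written does not close.

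The paper avoids this by changing only \emph{one} coordinate. Pick a single $j$ with $\xi_j\subset\tau_j'$, keep every other factor embedded in $i_{P_i}^{G_i}\sigma_i$, and replace only the $j$-th by $i_{R_j'}^{G_j}\gamma_j$ with $M_j'\not\sim_{G_j}M_j$. Then $L'=M_1\times\cdots\times M_j'\times\cdots\times G_{r+1}$ differs from $L$ in exactly one slot; subtracting the common blocks, $L'\sim_G L$ would force $M_j'\sim_{G_j}M_j$, a contradiction (and if $j=r+1$ the classical blocks already differ). Hence $[L',\sigma'_{L'}]_G\neq[L,\sigma_L]_G$ immediately, and every constituent of $\ind_{K\cap P}^K\xi$ is atypical. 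Your argument is easily repaired by making this single-coordinate choice.
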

\begin{proof}
  The representation $\ind_{K}^{G}\tau$ is finitely generated and
  hence has an irreducible quotient $\pi$. From Frobenius reciprocity,
  the representation $\pi$ occurs as a subquotient of
  $i_R^G(\sigma_L\otimes\chi)$, where $R$ is a standard parabolic
  subgroup $G$ with Levi factor $L$, and $\chi$ is some
  unramified character of $L$. 

  Let $\tilde{\sigma}_M$ be the representation
  $i_{R\cap M}^{M}\sigma_L$. Then $\tau$ occurs as a
    subrepresentation of $\res_Ki_R^G\sigma_L$, and we have
  restriction
  $$\res_Ki_R^G\sigma_L=\ind_{P\cap K}^K(\res_{K\cap M}\tilde{\sigma}_M)
  =\ind_{P\cap K}^K\tau_{M}\oplus
\ind_{P\cap K}^K\tau_M'.$$ The Levi subgroup $M$ is isomorphic to
$G_1\times G_2\times\cdots\times G_r\times G_{r+1}$. We identify
$\tilde{\sigma}_M$ with the representation
$\tilde{\sigma}_1\boxtimes\tilde{\sigma}_2
\boxtimes\dots\boxtimes\tilde{\sigma}_r\boxtimes\tilde{\sigma}_{r+1}$,
where $\tilde{\sigma}_i$ is the representation
$i_{P_i}^{G_i}(\sigma_i\otimes\chi_i)$. Here $P_i$ is the parabolic
subgroup $R\cap G_i$ of $G_i$, containing $M_i$ as a Levi factor and
$\chi_i=\res_{M_i}\chi$ is an unramified character of $M_i$ for all
$1\leq i\leq r+1$.

Let $$\res_{K\cap G_i}\tilde{\sigma_i}=\oplus_j\xi_i^j,$$ where
$\xi_i^0=\tau_i$ as defined in the decomposition of
$\res_{K\cap G_i}\tilde{\sigma}_i$ in \eqref{basic_decomp}, and for
$j>0$ the representation $\xi_i^j$ is an irreducible subrepresentation
of $\tau_i'$ in \eqref{basic_decomp}. Now the representation $\tau_M$
is isomorphic to
$\xi_1^0\boxtimes\cdots\boxtimes\xi_r^0\boxtimes\xi_{r+1}^0$. Similarly
define the representation $\tau'_M$ as the representation
$$\bigoplus_{(i_1, i_2, \cdots,
  i_{r+1})\neq
  0}\xi^{i_1}_1\boxtimes\xi^{i_2}_2\boxtimes\cdots\boxtimes
\xi^{i_{r+1}}_{r+1}.
$$ 
We denote by $\xi_I$, the summand corresponding to the 
tuple $I=(i_1, i_2, \cdots,i_{r+1})$. Let $I$ be the non-zero tuple
$(i_1, i_2, \cdots,i_{r+1})$ and fix $1\leq j\leq r+1$ such that $i_j\neq
0$.  
Now $\xi_j^{i_j}$ is atypical and hence occurs in 
$$\res_{K\cap G_j}i_{R_j'}^{G_j}\gamma_j$$
where $R_j'$ is a parabolic subgroup of $G_j$, with a
Levi factor $M_j'$, $\gamma_j$ is a cuspidal representation of $M_j'$
such that $[M_j', \gamma_j]$ is not equal to $[M_j, \sigma_j]$.

Let $L'$ be the Levi subgroup $M_1\times M_2\times\cdots\times
M_{j-1}\times M_j'\times \cdots\times G_{r+1}$ and $\sigma'_{L'}$ be
the cuspidal representation
$\sigma_1\boxtimes\cdots\boxtimes\sigma_{j-1}
\boxtimes\gamma_j\boxtimes\cdots\boxtimes \sigma_{r+1}$. Let $R'$ be
any parabolic subgroup such that $L'$ is a Levi factor of $R'$.  Note
that
$$\ind_{K\cap P}^K\xi_I\subset \res_Ki_{R'}^{G}\sigma'_{L'}.$$
Now the cuspidal support of $i_{R'}^{G}\sigma'_{L'}$ is given by
$[L', \sigma'_{L'}]$. If $j<r+1$ then using Lemmas \ref{unicity_gln}
and \ref{unicity_gln_noncusp}, we know that $M_j$ and $M_j'$ are not
conjugate in $G_j$. This shows that $L$ and $L'$ are not conjugate in
$G$.  Hence the inertial class $[L', \sigma'_{L'}]$ is not equal to
$[L, \sigma_L]$.  Assume that $j=r+1$. In this case, Lemma
\ref{unicity_classical_cusps} shows that $L'$ is a proper Levi
subgroup of $L$.  Hence the pairs $(L, \sigma_L)$ and
$(L', \sigma'_{L'})$ represent two distinct inertial classes. This
shows that any irreducible subrepresentation of
$\ind_{K\cap P_I}^K\xi_I$ is atypical.
\end{proof}
\section{Decomposition of an auxiliary representation }
\label{sec_auxiliary}
Let $P$ be any standard parabolic subgroup of $G$. Let $U$ be the
unipotent radical of $P$. Let $M$ be the standard Levi subgroup of
$P$. Let $\bar{P}$ be the opposite parabolic subgroup of $P$ with
respect to $M$. Let $\bar{U}$ be the unipotent radical of $\bar{P}$.
Let $\mathfrak{s}=[M, \sigma_M]$ be a depth-zero Bernstein component
such that $\sigma_M$ contains a type $(K_{M}, \tau_M)$, where $\tau_M$
is the inflation of a cuspidal representation $\gamma_M$ of $M(k_F)$.

Let $m\geq 1$ be any positive integer. Recall that $P(m)$ is defined
as the group $(P\cap K)K(m)$. The group $P(m)$ has Iwahori decomposition
with respect to the pair $(P, M)$. Moreover,
$$P(m)\cap M=K\cap M~{\rm and}~
P(m)\cap U=U\cap K.$$ The representation $\tau_M$ extends as a
representation of $P(m)$ via inflation from the map
$\pi_1:P(1)\rightarrow P(k_F)$ defined in \eqref{mod_p_reduction}. The groups
$U\cap P(m)$ and $\bar{U}\cap P(m)$ are contained in the kernel of
this inflation.  Note that
\[\bigcap_{m\geq 1}P(m)=P\cap K.\] We obtain 
$$\ind_{K\cap P}^K\tau_M=\bigcup_{m\geq 1}\ind_{P(m)}^K\tau_M.$$
We will show that the irreducible subrepresentations of the quotient
$$\ind_{P(m+1)}^K\tau_M/(\ind_{P(m)}^K \tau_M)$$ 
are atypical.

Given any irreducible representation $\tau$ of $M(k_F)$, we consider
$\tau$ first as a representation of $P(k_F)$ via inflation. Then
$\tau$ is considered as a representation of $P(1)$ via inflation from
the map $\pi_1:P(1)\rightarrow P(k_F)$ in
\eqref{mod_p_reduction}. There exists a standard parabolic subgroup
$R\subset P$ in $G$, containing $L$ as its standard Levi factor, such
that: $L\subset M$, and $\tau$ is a subrepresentation of
$$\ind_{R(k_F)\cap M(k_F)}^{M(k_F)}\tau',$$ where $\tau'$ is a cuspidal
representation of $L(k_F)$.  If
$${\rm
  Hom}_{P(1)}(\tau, \pi)\neq 0,$$ for some irreducible smooth
representation $\pi$ of $G$, then the representation $\tau'$ of $R(1)$
occurs in $\pi$. The cuspidal support of the representation $\pi$ is
$[L, \sigma_L]$, where $\sigma_L$ is a cuspidal representation of $L$
containing the pair $(K_L,\tau')$. We call the component
$[L, \sigma_L]_G$ as the {\it inertial class associated to the pair
  $(P(1), \tau)$}.
 
For the purpose of inductive arguments it is useful to introduce some
more classes of compact open subgroups and prove some basic properties
of these groups.  Let $I$ be a sequence of integers 
$$n\geq n_1\geq \cdots\geq n_r\geq 1.$$
Let $I_1$ be the sequence of integers as above consisting of a single
integer $n_r$.  Let $\mathcal{F}_I$ be the flag
$W^+_1\subset \cdots\subset W^+_r$ of totally isotropic subspaces of
$W$, as defined in \eqref{flags}, corresponding to $I$ (or possibly
the flag defined for \eqref{flags_2}, if $G$ is isomorphic to special
orthogonal subgroup ${\rm SO}_{2n}(F)$). Let $P$ be the standard
parabolic subgroup fixing the flag $\mathcal{F}_I$. Let
$\mathcal{F}_{I_1}$ be the flag $W^+_r$ (or possibly the space
$(W_r^+)'$ if $G$ is isomorphic to ${\rm SO}_{2n}(F)$). The standard
parabolic subgroup $P_{1}$ fixing the flag $\mathcal{F}_{I_1}$ is the
maximal proper parabolic subgroup containing the parabolic subgroup $P$. Let
$M_1$ be the standard Levi factor of $P_1$. Let $U_1$ be the unipotent
radical of $P$. Let $\bar{P}_1$ be the opposite parabolic subgroup of
$P_1$ with respect to $M_1$. Let $\bar{U}_1$ be the unipotent radical
of $P_1$.

Let $1\leq i\leq r$ be any positive integer. Let $\bar{V}_i^\pm$ be
the subspace $\mathcal{L}\otimes k_F$ spanned by set of vectors
$\{\varpi_F^{a_i}w_i\otimes 1\ |\ w_i\in S_i^\pm\}$. Let
$\bar{V}_{r+1}$ be the space $(\bar{W}_r^+\oplus
\bar{W}_r^-)^\perp$. Let $\bar{W}_i$ be the totally isotropic space
$$\bar{V}_1^+\oplus \bar{V}_2^+\oplus\cdots\oplus \bar{V}_i^+.$$
The parabolic subgroup $P(k_F)$ is the $G(\mathcal{L}\otimes k_F,
\bar{q})$-stabilizer of the flag
$$\bar{W}_1^+\subset \bar{W}_2^+\subset \cdots\subset \bar{W}_r^+.$$
The group $M(k_F)$ is the  $G(\mathcal{L}\otimes k_F,
\bar{q})$-stabilizer of the decomposition 
$$\bar{V}_1^+\oplus \bar{V}_2^+\oplus\cdots\oplus \bar{V}_r^+\oplus
\bar{V}_{r+1}
\oplus\bar{V}_r^-\oplus \bar{V}_{r-1}^-\oplus\cdots\oplus
\bar{V}_1^-.$$
Moreover, the group $P_1(k_F)$ is the $G(\mathcal{L}\otimes k_F,
\bar{q})$-stabilizer of the space $\bar{W}_r^+$, and 
$M_1(k_F)$ is the $G(\mathcal{L}\otimes k_F,
\bar{q})$-stabilizer of the decomposition 
$$W_r^+\oplus V_{r+1}\oplus W_r^-.$$

Let $m$ be a positive integer. We introduce a compact open subgroup
$P(1, m)\subseteq P(1)$, which helps in inductive arguments.  We set
$$P(1,m) =K(m)(P(1)\cap P_{1}).$$ 
Using Iwahori decomposition of the group $K(m)$,
we get that the group $P(1,m)$ admits an Iwahori decomposition with
respect to the pair $(P_{1}, M_{1})$. Let $U_1$ be the unipotent
radical of $P_1$ and $\bar{U}_1$ be the unipotent radical of the
opposite parabolic subgroup of $P_1$ with respect to $M_1$. Using the
Iwahori decomposition of $P(1)$ with respect to the pair $(P_1, M_1)$,
we get that 
$$P(1)=(P(1)\cap \bar{U}_1)(P(1)\cap P_1).$$
Now, the group $P(1)\cap \bar{U}$ is contained in $K(1)$. Hence, we
have $P(1, 1)=P(1)$.  One of the main ingredient in classification of
typical representations is the description of the induced
representation
 $$\ind_{P_1(1,m+1)}^{P_1(1,m)}\id.$$
 
Since the unipotent radical of $P_{1}$ is not necessarily abelian, it
 is useful to introduce another family of compact subgroups $R(m)$
 such that
$$P(1,m+1) \subset R(m) \subset P(1,m).$$ 
With respect to the basis 
\begin{equation}
(\varpi_F^{a_n}w_n, \varpi_F^{a_{n-1}}w_{n-1}, \dots, \varpi_F^{a_{-n+1}}w_{-n+1},
  \varpi_F^{a_{-n}}w_{-n}),
\end{equation} we identify the group $K$ as a
subgroup of ${\rm GL}_N(\mathfrak{o}_F)$ and
$P$ as a subgroup of invertible upper block matrices. With this identification,
let $R(m)$ be the compact open subgroup of $P(1,m)$ consisting of
matrices of the form:
$$\begin{pmatrix}
  *&*&*&*&*\\
  *&*&*&*&*\\
  *&*&*&*&*\\
  *&*&*&*&*\\
  Z&*&*&*&*
\end{pmatrix}$$ where entries of the matrix $Z$ belong to
$M_{n_r\times n_r}(\mathfrak{p}_F^{m+1})$. Since $m\geq 1$,
the group $R(m)$ is well defined. Let $\mathfrak{n}_1$ be the Lie
algebra of $\bar{U}_1(k_F)$.  Now, with respect to the basis
\begin{equation}
  (\varpi_F^{a_n}w_n\otimes 1, \varpi_F^{a_{n-1}}w_{n-1}\otimes 1, 
  \dots, \varpi_F^{a_{-n+1}}w_{-n+1}\otimes 1,
  \varpi_F^{a_{-n}}w_{-n}\otimes 1),
\end{equation} of
$\mathcal{L}\otimes k_F$, let $\bar{\mathfrak{n}}^1_{1}$ and
$\bar{\mathfrak{n}}_{1}^2$ be the space of matrices in $\mathfrak{n}_1$
of the form
$$\begin{pmatrix}
  0&0&0&0&0\\
  X&0&0&0&0\\
  a&0&0&0&0\\
  Y&0&0&0&0\\
  0&Y'&a'&X'&0
\end{pmatrix}\ \text{and}
\begin{pmatrix}
0&0&0&0&0\\
0&0&0&0&0\\
0&0&0&0&0\\
0&0&0&0&0\\
Z&0&0&0&0
\end{pmatrix}$$ respectively, where
$X, Y, (X')^{\text{tr}}, (Y')^{\text{tr}}\in M_{(n-n_r)\times
  n_r}(k_F)$, and $a, (a')^{\text{tr}}\in M_{1\times n_r}(k_F)$.  The
space $\mathfrak{n}_1$ is equal to
$\mathfrak{n}_1^1\oplus \mathfrak{n}_1^2$. Note that for symplectic
groups and even orthogonal groups, the $n+1$-th rows and columns are
assumed to be absent.

Now we want to decompose the representations
$$\ind^{P(1,m)}_{R(m)}\id~{\rm and}~\ind^{R(m)}_{P(1,m+1)}\id.$$ We
first consider two normal subgroups $K_1$ and $K_2$ of $P(1,m)$ and
$R(m)$ respectively, with the properties that $$K_1\cap R(m)\trianglelefteq 
K_1~ {\rm and}~ K_2\cap P(1,m)\trianglelefteq K_2.$$ The groups $K_1$
and $K_2$ are kernels of the quotient maps $$P(1,m)\rightarrow
M_{1}(k_F)~{\rm and}~R(m)\rightarrow M_{1}(k_F)$$ respectively. Since
$K_1$ and $K_2$ differ from $P(1,m)$ and $R(m)$ only by their 
intersections with Levi group $M_{1}$,
we get that $$K_1R(m)=P(1,m)~{\rm and}~K_2P(1,m+1)=R(m).$$
\begin{lemma}\label{normality_lemma}
The subgroup $K_1\cap R(m)$ is a normal subgroup of $K_1$ and
$K_2\cap P(1,m+1)$ is a normal subgroup of $K_2$. 
\end{lemma}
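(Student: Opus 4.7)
Both claims are verified by a direct matrix computation using the block decomposition of $P(1,m)$ with respect to the maximal parabolic $P_1$. The key facts I would record first are: any $x \in P(1,m) = K(m)(P(1) \cap P_1)$ has every strictly below-super-diagonal block (i.e.\ the entries in $\bar{\mathfrak{n}}_1 = \bar{\mathfrak{n}}_1^1 \oplus \bar{\mathfrak{n}}_1^2$) lying in $\mathfrak{p}_F^m$, since the factor in $P(1) \cap P_1$ is zero there and the factor in $K(m)$ is $\equiv \id \pmod{\mathfrak{p}_F^m}$; membership in $R(m)$ refines the $Z$-block (the single $\bar{\mathfrak{n}}_1^2$-block) from $\mathfrak{p}_F^m$ to $\mathfrak{p}_F^{m+1}$; and membership in $K_1$ or $K_2$ forces the $P_1$-Levi diagonal blocks of $x$ to reduce to the identity modulo $\mathfrak{p}_F$. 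Both $K_1$ and $K_2$ are already normal in $P(1,m)$ and $R(m)$ respectively, so it suffices to verify that the conjugates stay inside the prescribed intersection.

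For $K_1 \cap R(m) \trianglelefteq K_1$, I would take $k \in K_1$ and $h \in K_1 \cap R(m)$ and compute
\[
  (khk^{-1})_{5,1} = \sum_{i,j=1}^{5} k_{5,i}\, h_{i,j}\, (k^{-1})_{j,1}.
\]
A $\mathfrak{p}_F$-adic accounting shows that any term with $i$ or $j$ in $\{2,3,4\}$ already lies in $\mathfrak{p}_F^{2m} \subseteq \mathfrak{p}_F^{m+1}$ (using $m \geq 1$); the two corner terms at $(i,j) = (1,1)$ and $(5,5)$ reduce modulo $\mathfrak{p}_F^{m+1}$ to $k_{5,1}$ and $(k^{-1})_{5,1}$ respectively, after absorbing the Levi factors $h_{1,1}, h_{5,5}, k_{5,5}, (k^{-1})_{1,1} \equiv \id \pmod{\mathfrak{p}_F}$ into $\mathfrak{p}_F \cdot \mathfrak{p}_F^m = \mathfrak{p}_F^{m+1}$-corrections; and the $(5,1)$-term contributes $h_{5,1}$, which lies in $\mathfrak{p}_F^{m+1}$ by hypothesis. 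Reading the identity $kk^{-1} = \id$ in its $(5,1)$-block and applying exactly the same valuation bounds gives $k_{5,1} + (k^{-1})_{5,1} \equiv 0 \pmod{\mathfrak{p}_F^{m+1}}$, so the three surviving contributions cancel to leave $(khk^{-1})_{5,1} \in \mathfrak{p}_F^{m+1}$, hence $khk^{-1} \in R(m)$.

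The argument for $K_2 \cap P(1,m+1) \trianglelefteq K_2$ is parallel and is applied at each of the six $\bar{\mathfrak{n}}_1^1$-type positions $(i,j) \in \{(2,1),(3,1),(4,1),(5,2),(5,3),(5,4)\}$. Taking $k \in K_2$ and $h \in K_2 \cap P(1,m+1)$, the essential new input is that $k, k^{-1} \in K_2 \subseteq R(m)$ already have $Z$-block in $\mathfrak{p}_F^{m+1}$ and $h \in P(1,m+1)$ has every $\bar{U}_1$-block in $\mathfrak{p}_F^{m+1}$. Enumerating the 25 terms of $(khk^{-1})_{i,j}$ case by case (distinguishing Levi diagonal blocks, intra-middle off-diagonal entries in $\mathfrak{p}_F$, $\bar{\mathfrak{n}}_1^1$-blocks in $\mathfrak{p}_F^m$, $\bar{\mathfrak{n}}_1^2$-blocks in $\mathfrak{p}_F^{m+1}$, and $U_1$-blocks in $\mathfrak{o}_F$), every product is seen to lie in $\mathfrak{p}_F^{m+1}$ except the two "corner" contributions that reduce to $k_{i,j}$ and $(k^{-1})_{i,j}$; their sum vanishes modulo $\mathfrak{p}_F^{m+1}$ by the same bookkeeping applied to $kk^{-1} = \id$ in the $(i,j)$-block. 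The main obstacle is simply to execute this case analysis carefully and to confirm the initial structural claim that every $\bar{\mathfrak{n}}_1$-block of an element of $P(1,m)$ belongs to $\mathfrak{p}_F^m$; once that is in hand, no deeper structural input is required.
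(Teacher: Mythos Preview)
Your argument is correct, but it proceeds quite differently from the paper's proof. The paper first invokes the Iwahori decomposition of $K_i$ with respect to $(P_1,M_1)$: since $K_i\cap P_1$ already coincides with the $P_1$-part of the subgroup in question, it reduces the problem to checking that conjugation by elements $u^-\in K_i\cap\bar U_1$ preserves the subgroup. It then verifies $u^-j(u^-)^{-1}$ (for $j\in K_i\cap M_1$) via the observation that $K_i\cap M_1$ acts trivially on the quotient $\bar{\mathfrak n}_1^i$, and handles $u^-u^+(u^-)^{-1}$ (for $u^+\in K_i\cap U_1$) by an explicit block-matrix computation carried out separately in four cases according to whether the group is symplectic, even orthogonal, or odd orthogonal, and whether $W_r^+$ is maximal isotropic.

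Your route bypasses both the Iwahori reduction and the case split: you compute the offending block of $khk^{-1}$ directly for general $k,h$, and the single non-obvious point---that the two surviving contributions $k_{i,j}$ and $(k^{-1})_{i,j}$ cancel modulo $\mathfrak p_F^{m+1}$---is dispatched by reading the same valuation bookkeeping in the $(i,j)$-block of $kk^{-1}=\id$. This is a genuinely more uniform argument, treating all group types and all relevant block positions at once; the paper's approach, by contrast, makes the structural reason for normality (triviality of the $M_1$-action on the relevant graded piece of $\bar{\mathfrak n}_1$) more visible, at the cost of a longer case analysis.
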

\begin{proof}
  The groups $K_1$ and $K_2$ satisfy Iwahori decomposition with
  respect to the pair $(P_1, M_1)$.  Observe that $$K_1\cap
  P_{1}=(K_1\cap R(m))\cap P_{1}~{\rm and}~K_2\cap P_{1}=(K_2\cap
  P(1, m+1))\cap P_{1}.$$ We need to check that $K_1\cap
  \bar{U}_{1}$ normalizes $K_1\cap R(m)$, and $K_2\cap \bar{U}_{1}$
  normalizes $K_2\cap P_I(1, m+1)$. We have $M_{1}\cap
  P(1,m)$-equivariant isomorphisms
$$\dfrac{K_1\cap \bar{U}_1}{(K_1\cap R(m))\cap \bar{U}_1}\simeq 
\bar{\mathfrak{n}}_{1}^1$$ and
$$\dfrac{K_2\cap \bar{U}_{1}}{(K_2\cap P_I(1,m+1))\cap \bar{U}_1}\simeq 
\bar{\mathfrak{n}}_{1}^2.$$ 
Since $K_1\cap M_{1}$ (respectively
$K_2\cap M_{1}$) acts trivially on $\bar{\mathfrak{n}}_{1}^1$
(respectively on $\bar{\mathfrak{n}}_{1}^2$), we get that
$u^{-}j(u^{-})^{-1}$ belongs to $K_1\cap R(m)$ (respectively $K_2\cap
P(1,m)$) for all $u^- \in K_i\cap \bar{U}_{1}$ and $j\in K_i\cap
M_{1}$ for $i\in \{1,2\}$.  

With this, we are left with showing that
$u^{-}u^+(u^{-})^{-1}$ belongs to $K_1\cap R(m)$ (respectively
$K_2\cap P(1,m)$) for all $u^{-}$ in $K_1\cap \bar{U}_{1}$
(respectively $K_2\cap \bar{U}_{1}$) and $u^{+}$ in $K_1\cap U_{1}$
(respectively $K_2\cap U_{1}$). {\bf We break the verification in two
cases when $W_r$ is maximal or non-maximal totally isotropic
subspace}. Because of dimension reason, we consider the symplectic and
even orthogonal cases first and then consider the odd orthogonal case.

For any block matrix $A$ in $M_{m\times n}(\mathfrak{o}_F)$, let
${\rm val}(A)$ be the least positive integer $k$ such that
$A\in M_{m\times n}(\mathfrak{p}_F^k)$.  Let $t$ be the dimension of
$W_r$. First, suppose $W_r$ is a maximal totally isotropic space,
i.e., $t=n$.  Consider the case where $G$ is either a symplectic or
even orthogonal group. In this case, we have
$R(m)=P(1,m+1)$. let $$\begin{pmatrix}
  I_n&0 \\
  X&I_n \\
\end{pmatrix}\in K_1\cap \bar{U}_1~
{\rm and}~\begin{pmatrix}
  I_n&A \\
  0&I_n \\
\end{pmatrix}\in K_1\cap U_1,$$ where $X\in{\rm M}_{n}(\mathfrak
p_F^{m+1})$ and $A\in{\rm M}_{n}(\mathfrak o_F)$.
We have 
$$\begin{pmatrix}
I_n&0 \\
X&I_n \\
\end{pmatrix}\begin{pmatrix}
I_n&A \\
0&I_n \\
\end{pmatrix}\begin{pmatrix}
I_n&0 \\
X&I_n \\
\end{pmatrix}^{-1} =\begin{pmatrix}
  I_n-AX&A \\
  -XAX&I_n+XA \\
\end{pmatrix}.$$ 
The lemma in this situation follows from the observation that
$XAX\in {\rm M}_n(\mathfrak{p}_F^{m+1})$. For odd orthogonal
groups, $$u^-=\begin{pmatrix}
I_n&0&0 \\
a&1&0 \\
X&a'&I_n \\
\end{pmatrix}
\ \text{and}\ 
u^+=\begin{pmatrix}
  I_n & b & Y \\
  0 & 1 & b' \\
  0 & 0 & I_n \\
\end{pmatrix},$$
where $a'$ and $b'$ are uniquely determined by $a$ and $b$
respectively. Now, the matrix $u^-u^+{(u^-)}^{-1}$ in its block matrix
form as above is equal to 
$$\begin{pmatrix}
\ast&\ast&\ast\\
a_1&\ast&\ast\\
X_1& a_1'&\ast
\end{pmatrix},$$
where 
\begin{align*}
&a_1=-aba-(ay+b')(X+a'a),\\
&X_1=X-(Xb+a')a-(XY+a'b'+1)(X+aa')\\
&a_1'=Xb-(XY+a'b')a'.
\end{align*}
Clearly, ${\rm val}(a_1)$, ${\rm val}(a_1')$ and ${\rm val}(X_1)$ are
greater than or equal to $m+1$. This shows that $u^-u^+{(u^-)}^{-1}\in
K_1\cap R(m)$ for similar reasons.

Now assume that $W_r$ is a non-maximal totally isotropic subspace of
$W$, i.e. $t<n$. We first consider the symplectic or even orthogonal
case. Let
$$u^-=\begin{pmatrix}
I_t&0&0&0 \\
A&I_{n-t}&0&0 \\
B&0&I_{n-t}&0 \\
C&B'&A'&I_t \\
\end{pmatrix}\in K_i\cap\bar{U}_1
\ \text{and}\ u^+=\begin{pmatrix}
  I_t & X & Y & Z \\
  0 & I_{n-t} & 0 & Y' \\
  0 & 0 & I_{n-t} & X' \\
  0 & 0 & 0 & I_t \\
\end{pmatrix}\in K_i\cap U_{1},$$
for $i=1,2$. Hence ${\rm val}_F\{A,B,C\}\geq m$. Here again,
 $A',B',X'$ and $Y'$ are uniquely determined
by $A,B,X,$ and $Y$ respectively. The matrix
$u^-u^+(u^-)^{-1}$ looks like
$$u^-u^+(u^-)^{-1}=\begin{pmatrix}
  *&*&*&* \\
  P&*&*&* \\
  Q&*&*&* \\
  R&Q'&P'&* \\
\end{pmatrix},$$
where
\begin{align}\label{relations}
P=&-AXA-AYB-AZC-Y'C, \nonumber\\
Q=&-BXA-BYB-BZC-X'C, \\
R=&-CXA-B'A-CYB-A'B-CZC-B'Y'C-A'X'C. \nonumber
\end{align}
Since ${\rm val}_F(R)\geq m+1$, it follows that $K_1\cap R(m)$ is
normal in $K_1$. The remaining case, i.e. $K_2\cap P(m+1)$ is normal
in $K_2$ is similar. Indeed, in this case ${\rm val}_F\{A,B\}\geq m$
and ${\rm val}_F(C)\geq m+1$. Hence normality follows from the fact that
${\rm val}_F\{P,Q\}\geq m+1$.

Now finally we consider the odd orthogonal case.  We have
$$u^-=\begin{pmatrix}
I_t&0&0&0&0 \\
A&I_{n-t}&0&0&0 \\
x&0&1&0&0 \\
B&0&0&I_{n-t}&0 \\
C&B'&x'&A'&I_t \\
\end{pmatrix}
\ \text{and}\ 
u^+=\begin{pmatrix}
  I_t & X & a & Y & Z \\
  0 & I_{n-t}&0 & 0 & Y' \\
0&0&1&0&a' \\  
0 & 0 &0& I_{n-t} & X' \\
  0 & 0&0 & 0 & I_t \\
\end{pmatrix},$$ where $x\in {\rm M}_{1,t}(\mathfrak p_F^{m+1})$. Let
$A_1$ denote the matrix $\begin{pmatrix}
  A \\
  x
\end{pmatrix}\in M_{n-t+1,t}(\mathfrak p_F^{m+1})$. Similarly, We define the
matrix $X_1$ to be $X_1=(X \ a)\in M_{t,n-t+1}(\mathfrak o_F)$. 
After redefining $B'$ and $Y'$ appropriately, we get 
$$u^-=\begin{pmatrix}
I_t&0&0&0 \\
A_1&I_{n-t+1}&0&0 \\
B&0&I_{n-t}&0 \\
C&B'&A'&I_t \\
\end{pmatrix}
\ \text{and}\ 
u^+=\begin{pmatrix}
  I_t & X_1  & Y & Z \\
  0 & I_{n-t+1} & 0 & Y' \\  
0 & 0 & I_{n-t} & X' \\
  0 & 0 & 0 & I_t \\
\end{pmatrix}.$$ Now the normality follows from calculations
similar to \eqref{relations}.
\end{proof}
Using Mackey decomposition and the fact that the quotients
$${K_1}/(K_1\cap R(m))~{\rm and}~ {K_2}/(K_2\cap P(1,m+1))$$ are abelian, we
have
$${\res}_{K_1}\ind_{R(m)}^{P(1,m)}\id=\oplus_{\Lambda_1} \eta\
\text{and}\
{\res}_{K_2}\ind_{P(1,m+1)}^{R(m)}\id=\oplus_{\Lambda_2} \eta,$$
where $\Lambda_1$ and $\Lambda_2$ are characters on the quotients
${K_1}/(K_1\cap R(m))$ and ${K_2}/(K_2\cap P(1,m+1))$
respectively. The groups $P(1,m)$ and $R(m)$ act on $\Lambda_1$ and
$\Lambda_2$ respectively. We denote by $\Lambda_1'$ and $\Lambda_2'$
for a set of representatives for the action of $P(1,m)$ and $R(m)$
respectively. Now using Clifford theory, we obtain
\begin{equation}
\ind_{R(m)}^{P(1,m)}\id\simeq
\oplus_{\eta\in\Lambda_1'}\ind_{Z_{P(1,m)}(\eta)}^{P(1,m)}U_{\eta}
\end{equation}
and
\begin{equation}
\ind_{P(m+1)}^{R(m)}\id\simeq
\oplus_{\eta\in\Lambda_2'}\ind_{Z_{R(m)}(\eta)}^{R_{m}}U'_{\eta},
\end{equation}
where $U_{\eta}$ and $U'_{\eta}$ are some irreducible representations
of $Z_{P(1,m)}(\eta)$ and $Z_{R(m)}(\eta)$ respectively. The precise
description of $U_\eta$ is not used in any arguments. 

It is crucial to understand the images of the groups
  $Z_{P(1,m)}(\eta)$ and $Z_{R(m)}(\eta)$ in the quotient
  $K/K(1)$. This is achieved in Lemma \ref{lemma_bounds}, and we
begin with some preparations.  We first note that Iwahori
decomposition gives us
$$Z_{P(1,m)}(\eta)=Z_{P(1, m)\cap M_{1}}(\eta)K_1$$
and 
$$Z_{R(m)}(\eta)=Z_{R(m)\cap M_{1}}(\eta)K_2.$$
We have the following isomorphisms $$K_1/(K_1\cap R(m))\cong
\bar{\mathfrak{n}}_{1}^1$$ and $$K_2/(K_2\cap
P(1,m+1))\cong\bar{\mathfrak{n}}_{1}^2$$ respectively. The
$k_F$-dual of the space $\bar{\mathfrak{n}}_{1}^i$ is isomorphic to
$\bar{\mathfrak{n}}_{1}^i$ for $i\in\{1,2\}$, in a
$M_{1}(k_F)$-equivariant way. This is because the representation of
$M_{1}(k_F)$ on $\bar{\mathfrak{n}}_{1}^i$ is self-dual for
$i\in\{1,2\}$. Note that $P(1,m)\cap M_{1}=R(m)\cap M_{1}$. Observe
that the action of the groups $P(1,m)\cap M_{1}$ and $R(m)\cap
M_{1}$ on the characters in $\Lambda_1$ and $\Lambda_2$ factors
through the quotient map
\begin{equation}\label{reduction_levi}
\pi_1:K\cap M_{1}\rightarrow M_{1}(k_F).
\end{equation}
We
identify the group $M_{1}(k_F)$ with 
\begin{equation}\label{max_levi}
{\rm GL}(\bar{W}_r^+) \times G(\bar{V}^+_{r+1}\oplus \bar{V}^-_{r+1})
\end{equation} 
where $G(\bar{V}^+_{r+1}\oplus \bar{V}^-_{r+1})$ is the group of $k_F$-points
of the connected component of the isometry group of the pair
$(\bar{V}^+_{r+1}\oplus \bar{V}^-_{r+1}, \bar{q})$. The image of
$P(1,m)\cap M_{1}$ under the map \eqref{reduction_levi} is
contained in the group of the form
\begin{equation}\label{image_reduction}
Q\times G(\bar{V}^+_{r+1}\oplus \bar{V}^-_{r+1})
\end{equation}
 where $Q$ is the parabolic subgroup of
${\rm GL}(\bar{W}_r^+)$ fixing the flag $\bar{W}_1^+\subset
\cdots\subset \bar{W}_r^+$.

With the above observation, it is useful to  recall the
stabilisers in the case of general linear groups (see \cite[Lemma
3.8]{level_zero_gl_n_types}). Let $r>1$ be an integer and let
$I=(n_1,n_2,\cdots,n_r)$ be a partition of $n$. We denote by $P_I$,
the parabolic subgroup of upper block diagonal matrices of size
$n_i\times n_j$. The partition $(n_1, n_2, \cdots, n_{r-1})$ is
denoted by $J$.  Let $\mathcal{O}_{A}$ be an orbit for the action of
$P_{J}(k_F)\times\g{n_r}{k_F}$ on the set of matrices
$M_{(n-n_r)\times n_r}(k_F)$ given
by
$$(g_1,g_2)X=g_1Xg_2^{-1}~\forall~g_1\in
P_{J}(k_F),~g_2\in\g{n_r}{k_F},~X\in M_{(n-n_r)\times n_r}(k_F).$$ Let
$p_j$ be the composition of the quotient map
$P_J(k_F)\times {\rm GL}_{n_r}(k_F)\rightarrow M_I(k_F)$ and the
projection onto the $j^{th}$-factor of
$M_I(k_F)=\prod_{i=1}^{r}\g{n_i}{k_F}$ i.e.
$$p_j:P_J(k_F)\times {\rm GL}_{n_r}(k_F)\rightarrow {\rm GL}_{n_j}(k_F).$$
\begin{lemma}\label{lemma_level_4}
  Let $\mathcal{O}_A$ be an orbit consisting of non-zero matrices in
  $M_{(n-n_r)\times n_r}(k_F)$.  We can choose a representative $A$
  such that the $P_{J}(k_F)\times\g{n_r}{k_F}$-stabiliser
  $Z_{P_{J}(k_F)\times\g{n_r}{k_F}}(A)$ of $A$,
  satisfies one of the
  following conditions.
\begin{enumerate}
\item There exists a positive integer $j$ with $j\leq r$ such that the
  image of
$$p_j:Z_{P_{J}(k_F)\times\g{n_r}{k_F}}(A)\rightarrow \g{n_j}{k_F}$$
is contained in a proper parabolic subgroup of $\g{n_j}{k_F}$.
\item There exists a positive integer $i$ with $1\leq i\leq r-1$ such that
  $p_i(g)=p_r(g)$, for all $g$ in 
$$ Z_{P_{J}(k_F)\times\g{n_r}{k_F}}(A).$$
\end{enumerate}
\end{lemma}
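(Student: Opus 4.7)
The plan is to normalise the representative in the last non-vanishing block row and then extract both possible conclusions from the stabilizer equation restricted to that single row. Write any representative $A \in \mathcal{O}_A$ in block form with $k$-th block $A_k \in M_{n_k \times n_r}(k_F)$ for $k = 1, \ldots, r-1$, and let $i \in \{1, \ldots, r-1\}$ be the largest index with $A_i \neq 0$; this is well-defined because $A \neq 0$.

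The $\g{n_i}{k_F}$-factor of the Levi of $P_J(k_F)$ acts on the left on the $i$-th block row alone (taking all other diagonal blocks to be the identity and all off-diagonal blocks to be zero), and the full $\g{n_r}{k_F}$ acts on the right. Both actions stay inside $P_J(k_F)\times \g{n_r}{k_F}$, so I can Smith-reduce $A_i$ to $\begin{pmatrix} I_s & 0 \\ 0 & 0 \end{pmatrix}$ with $s = \rank(A_i)$ without leaving $\mathcal{O}_A$; the blocks $A_k$, $k<i$, get column-mixed in the process, but their explicit form will play no role.

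For $(g_1, g_2)$ in the stabilizer of this reduced $A$, write $g_1 \in P_J(k_F)$ with diagonal Levi blocks $h_1,\ldots,h_{r-1}$ and upper off-diagonal blocks $(g_1)_{k\ell}$. Because $A_k = 0$ for $k > i$, the identity $g_1 A = A g_2$ read in the $i$-th block row simplifies to $h_i A_i = A_i g_2$. Decomposing $h_i$ and $g_2$ into blocks of sizes $(s, n_i - s)$ and $(s, n_r - s)$ respectively, this single matrix identity forces the lower-left block of $h_i$ to vanish, the upper-right block of $g_2$ to vanish, and the two $s \times s$ upper-left diagonal blocks to coincide.

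The three subcases of the lemma fall out directly. If $s < n_r$, then $g_2 = p_r(g)$ preserves the non-trivial proper subspace of $k_F^{n_r}$ spanned by the last $n_r - s$ standard basis vectors, so $p_r$ sends the stabilizer into a proper parabolic of $\g{n_r}{k_F}$, giving (1) with $j = r$. If $s = n_r < n_i$, then $h_i = p_i(g)$ preserves the non-trivial proper subspace of $k_F^{n_i}$ spanned by the first $s$ standard basis vectors, giving (1) with $j = i$. If $s = n_r = n_i$, then $A_i = I_{n_r}$ and the equation collapses to $h_i = g_2$, which is precisely (2) with this value of $i$. No further compatibility with block rows $k < i$ needs to be checked, since each conclusion is already forced by the $i$-th block row alone. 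The argument has no serious obstacle; the only delicate point is to confirm that the row and column operations used to Smith-reduce $A_i$ really do live inside $P_J(k_F)\times \g{n_r}{k_F}$, so that the orbit is preserved.
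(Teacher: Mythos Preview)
Your proof is correct. Note, however, that the paper does not prove this lemma itself: it merely recalls the statement from \cite[Lemma~3.8]{level_zero_gl_n_types}, so there is no in-paper proof to compare against. Your argument---pick the lowest non-vanishing block row $A_i$, Smith-reduce it using the $\g{n_i}{k_F}$-factor of the Levi on the left and $\g{n_r}{k_F}$ on the right, then read off the three cases $s<n_r$, $s=n_r<n_i$, $s=n_r=n_i$ from the single equation $h_iA_i=A_ig_2$---is exactly the natural direct proof one would expect to find in the cited reference, and nothing in it is in doubt.
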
 
Now let us note a small observation which will be useful in the proof of 
Lemma \ref{lemma_bounds}.
\begin{lemma}\label{auto_unipotent}
Let $G$ be a split reductive group with an automorphism $\theta$. There
exists a parabolic subgroup of $G\times G$ with unipotent radical 
$U$ such that $\{(g, \theta(g))| g\in G\}$ has trivial intersection
with $U$.
\end{lemma}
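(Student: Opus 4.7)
The plan is to construct an explicit parabolic subgroup of $G \times G$ whose unipotent radical automatically misses the twisted diagonal $\Delta_\theta := \{(g, \theta(g)) : g \in G\}$. The guiding observation is that if $(u, \theta(u))$ lies in some $U_1 \times U_2$ with $U_1, U_2$ unipotent radicals of parabolics of $G$, then $\theta(u) \in \theta(U_1) \cap U_2$; so one wants to arrange $\theta(U_1) \cap U_2 = \{1\}$. The cleanest way to do this is to take $U_1$ to be a maximal unipotent subgroup and $U_2$ to be an ``opposite'' to $\theta(U_1)$.

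Concretely, I would fix a Borel subgroup $B$ of $G$ with unipotent radical $U$, and let $B^-$ and $U^-$ denote an opposite Borel and its unipotent radical, so that $U \cap U^- = \{1\}$ by the standard structure theory of split reductive groups (the big Bruhat cell). Since $\theta$ is an automorphism, $\theta(B)$ is again a Borel subgroup of $G$; by conjugacy of Borel subgroups in the connected reductive group $G$, there exists $g \in G$ with $\theta(B) = gBg^{-1}$, and consequently $\theta(U) = gUg^{-1}$. I then set $P := B \times gB^-g^{-1}$, a parabolic subgroup of $G \times G$ whose unipotent radical is $U_P := U \times gU^-g^{-1}$.

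To verify the claim, suppose $(u, \theta(u)) \in U_P$. Then $u \in U$ gives $\theta(u) \in \theta(U) = gUg^{-1}$, while the second coordinate gives $\theta(u) \in gU^-g^{-1}$. Conjugating by $g^{-1}$ yields $g^{-1}\theta(u)g \in U \cap U^- = \{1\}$, so $\theta(u) = 1$ and hence $u = 1$ because $\theta$ is an automorphism. The argument is essentially immediate once the correct parabolic is spotted; the only inputs are the conjugacy of Borel subgroups and the fact $U \cap U^- = \{1\}$, so there is no real obstacle beyond recognising that one should pair $B$ with a $\theta$-adjusted opposite Borel rather than with $B^-$ itself.
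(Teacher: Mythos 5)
Your proof is correct and uses the same essential idea as the paper's: both rest on $U\cap U^-=\{1\}$ for a parabolic and its opposite, and both end up with a parabolic of $G\times G$ of the form (parabolic)~$\times$~($\theta$-adjusted opposite). The paper phrases this more slickly---it notes that $\Delta_\theta$ is the image of the diagonal under the automorphism $\id\times\theta$ of $G\times G$ and transports the obvious parabolic $P\times\bar P$ along it---whereas you make the same parabolic explicit by invoking conjugacy of Borel subgroups; the two arguments are interchangeable.
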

\begin{proof}
Let $P$ be any proper parabolic subgroup of $G$ and $\bar{P}$
be any opposite parabolic subgroup of $P$. The unipotent radical 
of $P\times \bar{P}$ has trivial intersection with the diagonal subgroup
of $G\times G$. The group $\{(g, \theta(g))| g\in G\}$ is the image by
the automorphism $\id\times \theta$ of the diagonal subgroup of $G\times G$
and hence the lemma follows. 
\end{proof}
The following is the technical heart of this article. {\bf Here we use the
condition that $q_F>5$}. Let $\tilde{H}$ be the image of
$P(1,m)\cap M_{1}$ under the map $\pi_1$ in
\eqref{reduction_levi}. This is contained in the group $Q\times
G(\bar{V}^+_{r+1}\oplus \bar{V}^-_{r+1})$ as in
\eqref{image_reduction}. Hence the lemma is based on the $Q\times
G(\bar{V}^+_{r+1}\oplus \bar{V}^-_{r+1})$-stabilisers (which contain
$\tilde{H}$-stabilisers) of non-trivial elements in
$\bar{\mathfrak{n}}^1_1$ and $\bar{\mathfrak{n}}^2_1$. There are
several cases to consider primarily depending on the subspace
$\bar{W}_r^+$ of the flag $\bar{W}_1^+\subset \cdots\subset
\bar{W}_r^+$ being maximal or not. Let $\theta$ be the quotient map
$$\theta: Q\times G(\bar{V}_{r+1}^+\oplus 
\bar{V}_{r+1}^-)\rightarrow M(k_F).$$
\begin{lemma}\label{lemma_bounds}
  Let $u$ be any non-trivial element of $\bar{\mathfrak{n}}^1_{1}$ or
  $\bar{\mathfrak{n}}_{1}^2$ and $H$ be the image of
  $Z_{\tilde{H}}(u)$ under the map $\theta$. Let $\tau$ be a cuspidal
  representation of $M(k_F)$ and $\xi$ be an irreducible
  subrepresentation of $\res_{H}\tau$. There exists an irreducible
  representation $\tau'$ of $M(k_F)$ such that $\xi$ occurs in the
  restriction $\res_{H}\tau'$ and the inertial
    classes associated to the pairs $(P(1),\tau)$ and $(P(1),\tau')$
  are distinct.
\end{lemma}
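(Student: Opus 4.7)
The proof reduces to an application of Lemma \ref{ref_disct}: it suffices to exhibit a proper parabolic subgroup of $M(k_F)$ whose unipotent radical meets $H=\theta(Z_{\tilde H}(u))$ trivially. Granted such a parabolic, Lemma \ref{ref_disct} produces an irreducible non-cuspidal representation $\tau'$ of $M(k_F)$ containing $\xi$ in its restriction to $H$. Since $\tau$ is cuspidal of $M(k_F)$, the inertial class associated to $(P(1),\tau)$ is $[M,\sigma_M]_G$, whereas the non-cuspidality of $\tau'$ forces its cuspidal support onto a proper Levi $L'\subsetneq M$; the associated inertial class $[L',\sigma_{L'}]_G$ is then distinct from $[M,\sigma_M]_G$, since $L'$ and $M$ are non-conjugate Levi subgroups of $G$.

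Using $M(k_F)\cong\prod_{i=1}^{r}G_i(k_F)\times G_{r+1}(k_F)$, the required proper parabolic can be built from a proper parabolic of a single factor or of a product of two factors. The argument splits according to which summand contains $u$. If $u\in\bar{\mathfrak{n}}_1^2$, then $u$ is a nonzero symmetric or alternating bilinear form on $\bar W_r^+$ (according to $\epsilon$), and the classical factor $G(\bar V_{r+1}^+\oplus\bar V_{r+1}^-)$ acts trivially on $\bar{\mathfrak{n}}_1^2$. Hence $H=p(Z_Q(u))\times G_{r+1}(k_F)$, where $p\colon Q\to\prod_i G_i(k_F)$ is the Levi quotient, and one needs a proper parabolic of $\prod_i G_i(k_F)$ whose unipotent avoids $p(Z_Q(u))$. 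Choosing a representative with a nonzero off-diagonal block $Z_{ij}$ ($i\neq j$), the action of $G_i(k_F)\times G_j(k_F)$ on $Z_{ij}$ is the standard matrix action, and Lemma \ref{lemma_level_4} (applied with $j$ playing the role of the last index) yields either (i) the stabilizer's image in some $G_k(k_F)$ lies in a proper parabolic, from which the desired parabolic of $M(k_F)$ follows immediately, or (ii) a diagonal identification $p_i(g)=p_r(g)$ holds throughout the stabilizer, in which case Lemma \ref{auto_unipotent} (with the identity automorphism on $G_i(k_F)=G_r(k_F)$) supplies the required parabolic of $G_i(k_F)\times G_r(k_F)$. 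When only diagonal blocks $Z_{ii}$ are nonzero, one chooses a representative with $Z_{ii}$ having a nonzero kernel, so the stabilizer preserves a proper nonzero subspace of $V_i^+$ and is contained in a proper parabolic of $G_i(k_F)$. The hypothesis $q_F>5$ is used in selecting these representatives.

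If $u\in\bar{\mathfrak{n}}_1^1$, both factors of $\tilde H$ act, and $u$ encodes maps $X,a,Y$ from $\bar W_r^+$ into $\bar V_{r+1}^+,\bar W_0,\bar V_{r+1}^-$. A similar dichotomy arises: after a suitable orbit choice, either $H$ projects into a proper parabolic of a single factor of $M(k_F)$ (handled as above), or $H$ projects into a diagonal-type subgroup of a product of two factors of $M(k_F)$, one of which may now be $G_{r+1}(k_F)$ (handled by Lemma \ref{auto_unipotent} applied to the relevant product). The main obstacle is this final case: the coupling between the $Q$-action on $\bar W_r^+$ and the classical $G_{r+1}(k_F)$-action must be analyzed separately according to whether $\bar W_r^+$ is a maximal totally isotropic subspace, and the hypothesis $q_F>5$ must be used once more to guarantee the existence of orbit representatives for which the dichotomy becomes effective.
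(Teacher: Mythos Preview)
Your overall architecture matches the paper's: reduce to Lemma~\ref{ref_disct} by exhibiting a proper parabolic of $M(k_F)$ whose unipotent radical misses $H$, and feed this through the $M_1(k_F)$-action on $\bar{\mathfrak n}_1^1$ and $\bar{\mathfrak n}_1^2$. However, there is a genuine gap in your treatment of the one-dimensional blocks, and it is precisely there that the hypothesis $q_F>5$ enters --- not in the way you suggest.

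Concretely, consider the situation (arising for $u\in\bar{\mathfrak n}_1^2$, or in the reduction of the non-maximal case to it) where the relevant block $B_{pq}$ links two factors $\mathrm{GL}(\bar V_p^+)$ and $\mathrm{GL}(\bar V_q^+)$ with $\dim\bar V_p^+=\dim\bar V_q^+=1$. Your option (ii) from Lemma~\ref{lemma_level_4} would give a diagonal constraint $g_p g_q=1$, and you then invoke Lemma~\ref{auto_unipotent}. But Lemma~\ref{auto_unipotent} requires a \emph{proper} parabolic of the factor, and $\mathrm{GL}_1$ has none; the lemma is vacuous here. Likewise, your alternative ``choose a representative with $Z_{ii}$ having nonzero kernel'' fails: a nonzero $(-\epsilon)$-form on a one-dimensional space (which occurs in the symplectic case) has trivial kernel, and the $Q$-orbit does not contain a degenerate representative. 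So in this case there is \emph{no} proper parabolic of $M(k_F)$ whose unipotent radical avoids $H$, and the entire Lemma~\ref{ref_disct} mechanism is unavailable.

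The paper handles this case by abandoning Lemma~\ref{ref_disct} and constructing $\tau'$ directly: writing $\tau=\tau_1\boxtimes\cdots\boxtimes\tau_r$, one sets
\[
\tau'=\tau_1\boxtimes\cdots\boxtimes\tau_p\eta\boxtimes\cdots\boxtimes\tau_q\eta^{-1}\boxtimes\cdots\boxtimes\tau_r
\]
for a character $\eta$ of $k_F^\times$. The constraint $g_pg_q=1$ on $H$ forces $\res_H\tau=\res_H\tau'$. One then checks that $(P(1),\tau)$ and $(P(1),\tau')$ give the same inertial class only if $\eta\in\{\tau_p^{-2},\,\tau_p\tau_q,\,\tau_p\tau_q^{-1}\}$; the hypothesis $q_F>5$ guarantees a nontrivial $\eta$ outside this three-element set. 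This is the actual role of $q_F>5$ --- it is a counting argument on characters, not a device for ``selecting orbit representatives''. Your proposal needs this argument (or an equivalent) to close the one-dimensional case. Separately, your sketch for $u\in\bar{\mathfrak n}_1^1$ in the non-maximal case is too coarse: the paper's argument there requires a careful analysis of the rank of $X_t\colon\bar V_t^+\to\bar V_{r+1}$ and of whether the image $X_t\bar V_t^+$ is totally isotropic, non-singular, or has nontrivial radical, each subcase producing the parabolic differently.
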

\begin{proof}
  We will show that there exists a parabolic subgroup $S$ of $M(k_F)$
  such that $\Rad(S)\cap H$ is trivial. Using Lemma \ref{ref_disct} we
  get a non-cuspidal irreducible $M(k_F)$-representation $\tau'$ such
  that $\xi$ occurs in $\res_H\tau.$ The inertial classes
  associated to the pairs $(P_I(1), \tau)$ and $(P_I(1), \tau')$ are
  clearly distinct.
  
  We begin with the case where {\bf the space $W_{r}^+$ is a maximal
    isotropic subspace of $(W,q)$}. In this case, $P$ is contained in
  the maximal parabolic subgroup $P_1$ fixing the maximal isotropic
  subspace $W_r^+$ of $W$. Recall that the standard Levi factor of
  $P_1$ is denoted by $M_1$. The adjoint action of
  $M_1(k_F)\simeq {\rm GL}(\bar{W}_r^+)$ on $\bar{\mathfrak{n}}_{1}$,
  the Lie algebra of the unipotent radical of $\bar{P}_1(k_F)$, is
  the representation of ${\rm GL}(\bar{W}^+_r)$ on the space of
  $-\epsilon$ forms on $\bar{W}^+_{r}$. 

  Let $B$ be a $-\epsilon$ bilinear form on $\bar{W}_r^+$
  corresponding to $u$. In this case $\tilde{H}$ is contained in
  $Q$. Let $g=(g_{kl})$ and $B=(B_{k'l'})$ be the block matrix
  representation of the elements $g$ in $Q$ and the $-\epsilon$
  bilinear form $B$ on $\bar{W}_r^+$ with respect to the decomposition
  $\bar{V}_1^+\oplus\cdots\oplus \bar{V}_r^+$ of $\bar{W}_r^+$. Let
  $p$ be the largest positive integer such that the $B_{pq}$ is
  non-zero for some $1\leq q\leq r$. Let $q$ be the largest positive
  integer such that $B_{pq}\neq 0$. For any $g\in Z_Q(B)$ we have
$$g_{pp}B_{pq}g_{qq}^{T}=B_{pq}$$
where $B_{pq}$ is bilinear form on $\bar{V}^+_{p}\times \bar{V}^+_q$.
Without loss of generality assume that
$$\dim \bar{V}^+_{p}>\dim \bar{V}^+_{q}.$$ Let $S$ be the stabiliser of
the kernel of the map $\bar{V}^+_p\rightarrow (\bar{V}^+_q)^\vee$
induced by $B_{pq}$. Then $g_{pp}$ belongs to a proper parabolic
subgroup $\bar{S}$ of ${\rm GL}(\bar{V}^+_{p})$. Hence $H$ is
contained in a proper parabolic subgroup $\bar{S}$ of $M(k_F)$. The
required parabolic subgroup $S$ can be taken to be any opposite
parabolic subgroup of $\bar{S}$.

{\bf Consider the case where $\dim \bar{V}^+_{p}$ is equal to
  $\dim \bar{V}^+_{q}>1$}. If the map
$\bar{V}^+_{p}\rightarrow (\bar{V}^+_{q})^\vee$ induced by $B_{pq}$
has non-trivial kernel then $g_{pp}$ belongs to the proper parabolic
subgroup of ${\rm GL}(\bar{V}^+_{p})$ fixing this kernel. Hence
$H$ is contained in a proper parabolic subgroup $\bar{S}$ of
$M(k_F)$. Let $S$ be an opposite parabolic subgroup of $\bar{S}$. We
get that $\Rad(S)\cap H$ is a trivial group. We assume that the map
$\bar{V}^+_{p}\rightarrow (\bar{V}^+_{q})^\vee$, induced by $B_{pq}$,
is an isomorphism. Now using Lemma \ref{auto_unipotent}, we get a
proper parabolic subgroup $S$ of $M(k_F)$, with unipotent radical $U$,
such that $H\cap U$ is trivial.

We {\bf consider the case where $\dim \bar{V}^+_{p}$ is equal to
  $\dim \bar{V}^+_{q}=1$}. In this case, the group $H$ consists of
elements of the form
$$\diag(g_1, \cdots, g_p, \cdots , g_q, \cdots, g_r)$$
where $g_i\in {\rm GL}(\bar{V}_i^+)$ for $i\in\{p,q\}$ and
$g_pg_q=1$. We identify the representation $\tau$ with
$\tau_1\boxtimes\tau_2\boxtimes\cdots\boxtimes \tau_r$ where $\tau_i$
is a cuspidal representation of ${\rm GL}(\bar{V}^+_{i})$. Let $\eta$
be a non-trivial character of $k_F^\times$ and $\tau'$ be the
representation
$$\tau_1\boxtimes\cdots\boxtimes \tau_p\eta\boxtimes\cdots\boxtimes
\tau_q\eta^{-1}\boxtimes\cdots\boxtimes \tau_r.$$ Now the Bernstein
components associated to the pairs $(P_I(1), \tau)$ and
$(P_I(1), \tau')$ are the same if and only if the set
$\{\tau_p\eta, \tau_p^{-1}\eta^{-1}\}$ is either equals to
$\{\tau_p, \tau_p^{-1}\}$ or $\{\tau_q\eta^{-1},
\tau_p^{-1}\eta\}$. Hence, the character $\eta$ belongs to the set
$\{\tau_p^{-2}, \tau_p\tau_q, \tau_p\tau_q^{-1}\}$. Since $q_F>5$, we
can find a character $\eta$ such that $\eta$ does not belong to the
set $\{\tau_p^{-2}, \tau_p\tau_q, \tau_p\tau_q^{-1}\}$. For such a
choice of $\eta$ the Bernstein components associated to the pairs
$(P(1), \tau)$ and $(P(1), \tau')$ are distinct, and from construction
$\res_H\tau$ is equal to $\res_H\tau'$.
 
We come to the case when {\bf $\bar{W}_r^+$ is not a maximal isotropic
  subspace}. In this case, the space $\bar{V}_{r+1}$ is non-zero. The
standard Levi factor $M_{1}$ of $P_{1}$ is isomorphic to
$${\rm GL}(\bar{W}^+_r)\times G(\bar{V}_{r+1}).$$ 
Recall the notation $\bar{V}_{r+1}$ for the space
$(\bar{W}_r^+\oplus \bar{W}_r^-)^\perp$. The adjoint action of
$M_{1}$ on $\mathfrak{n}_{1}^2$ factors through the map
$${\rm GL}(\bar{W}_r)\times G(\bar{V}_{r+1})
\rightarrow {\rm GL}(\bar{W}_r).$$ In this case, the action of
${\rm GL}(\bar{W}_r)$ on $\mathfrak{n}_{1}^2$ is its representation
on the space of $-\epsilon$ forms. This case is similar to the case
where $\bar{W}^+_r$ is maximal and the proof of the lemma, in this case,
follows from the analysis in the previous case.

The action of $M_{1}(k_F)$ on $\mathfrak{n}_{1}^1\simeq {\rm
  Hom}(\bar{W}^+_r, \bar{V}_{r+1})$ is given by $$(g_1,
g_2)X=g_1Xg_2^{-1},~\forall~g_1\in {\rm GL}(\bar{W}^+_r),~g_2\in
G(\bar{V}_{r+1}).$$  We have to consider the stabilisers of $Q\times
G(\bar{V}_{r+1})$ on the space ${\rm
  Hom}(\bar{W}_r^+,\bar{V}_{r+1})$. Let $X$ be a non-zero element of
${\rm Hom}(\bar{W}_r^+, \bar{V}_{r+1})$. We have the decomposition 
$${\rm Hom}(\bar{W}_r^+,\bar{V}_{r+1})\simeq 
\bigoplus_{i=1}^r{\rm Hom}(\bar{V}_r^+,\bar{V}_{r+1}).$$ Now decompose
$X$ as the sum $\sum_{i=1}^rX_i$ such that $X_i$ belongs to ${\rm
  Hom}(\bar{V}_r^+,\bar{V}_{r+1})$. Let  $g=(g_{mn})$ be the block 
matrix form of any element in $Q$ with respect to the decomposition 
$$\bar{W}_r^+=\bar{V}_1^+\oplus\cdots\oplus \bar{V}_r^+.$$  
Let $t$ be the least positive integer such that $X_t$ is non-zero. We
then have $$g_{tt}X_{t}g^{-1}=X_t~\forall~g_{tt}\in{\rm
  GL}(\bar{V}_t^+),\tilde g\in G(\bar{V}_{r+1}).$$ Now let $R$ be the
group ${\rm GL}(\bar{V}_t^+)\times G(\bar{V}_{r+1})$.

{\bf Consider the case when
  $\dim(\bar{V}_t^+)>\dim(\bar{V}_{r+1})$}. In this case $Z_{R}(X_t)$ is
contained in a subgroup of the form $P\times G(\bar{V}_{r+1})$ where
$P$ is a proper parabolic subgroup of ${\rm GL}(\bar{V}_t^+)$ (see
Lemma \ref{lemma_level_4}).  Hence the unipotent radical of
$\bar{P}\times G(\bar{V}_{r+1})$, for any opposite parabolic subgroup
$\bar{P}$ of $P$, has trivial intersection with $Z_R(X_t)$. This shows that
there exists an unipotent radical of $M(k_F)$ which has trivial intersection
with $H$ and hence we get the lemma.

{\bf Now assume that $\dim(\bar{V}_t^+)$ is equal to
  $\dim(\bar{V}_{r+1})$}. In this case {\bf if the rank of $X_t$ is
  not equal to $\dim(\bar{V}_t^+)$} then $Z_R(X_t)$ is contained in
$P\times G(\bar{V}_{r+1}^+)$ where $P$ is a proper parabolic subgroup
of ${\rm GL}(\bar{V}_t^+)$ from similar arguments of the previous case
we prove the lemma. {\bf If the rank of $X_t$ is equal to
  $\dim(\bar{V}_t)$} then $Z_R(X_t)$ is contained in a group of the form
$$\{(X_tgX_t^{-1}, g); g\in  G(\bar{V}_{r+1}^+)\}.$$
Consider any Borel subgroup $B$ of ${\rm GL}({V}_{r+1}^+)$ such that
$B\cap G(\bar{V}_{r+1}^+)$ is the Borel subgroup of
$G(\bar{V}_{r+1}^+)$.  Let $\bar{B}$ be any opposite Borel
subgroup of $B$. The group $\bar{B}\times B$ can be identified with a
Borel subgroup of ${\rm GL}(\bar{V}_t^+)\times G(\bar{V}_{r+1})$. Now
the unipotent radical of the Borel subgroup
$X_t\bar{B}X_t^{-1}\times B$ has trivial intersection with
$Z_R(X_t)$, which proves the lemma in this case.

Let $(g_1, g_2)$ be an element of the group $Z_R(X_t)$ such that $g_1\in
{\rm GL}(\bar{V}_t^+)$ and $g_2\in G(\bar{V}_{r+1})$.  {\bf We are
  left with the case when
  $\dim(\bar{V}_t^+)<\dim(\bar{V}_{r+1})$}. Let $X_t\in {\rm
  Hom}_{k_F}(\bar{V}^+_t, \bar{V}_{r+1})$ be an operator such that
$\ker(X_t)$ is a non-zero subspace (since $X_t$ is non-zero operator,
$\ker(X_t)$ is not equal to $\bar{V}_r^+$). The group $Z_R(X_t)$ is
contained in a group of the form $P\times G(\bar{V}_{r+1})$ where $P$
is a parabolic subgroup of ${\rm GL}(\bar{V}_t^+)$ fixing
$\ker(X_t)$. This shows that $H$ is contained in a proper parabolic
subgroup of $M(k_F)$.  Now assume that $X_t$ is surjective. If
$\Rad(X_t\bar{V}^+_t)$ is a proper non-zero subspace of
$(X_t\bar{V}^+_t, \bar{q})$ then for any $(g_1, g_2)$ in
$Z_R(X_t)$ the element $g_2$ stabilises the space
$X_t\bar{V}_t^+$. This implies that $g_2$ stabilises the space
$\Rad(X_t\bar{V}_t^+)$. This shows that $g_2$ stabilises a proper
isotropic subspace and hence is contained in a proper parabolic
subgroup of $G(\bar{V}_{r+1})$.

Finally, consider the case where {\bf the space $X_t\bar{V}^+_t$ is
  either totally isotropic or non-singular}. If the space
$X_t\bar{V}_t^+$ is totally isotropic, then the element $g_2$ belongs
to a proper parabolic subspace of $G(\bar{V}_{r+1})$. If
$X_t\bar{V}_t^+$ is a non-singular space then the form $\bar{h}'$,
obtained by pulling $\bar{h}$ restricted to $X_t\bar{V}^+_t$ to
$\bar{V}^+_t$, is preserved by $g_1$. Hence $g_1$ belongs to
$G((\bar{V}_t^+, h'))$. In both the cases we can find a proper
parabolic subgroup $P$ of $ {\rm GL}_r(\bar{W}^+_r)\times
G(\bar{V}_{r+1})$ such that $Z_R(X_t)$ has trivial intersection with
$\Rad(P)$ and hence proving the lemma.
\end{proof}
\section{Classification of \texorpdfstring{$K$}{}-typical
  representations}
We need the following well known lemma (see \cite[Lemma
2.6]{level_zero_gl_n_types}). For the sake of next lemma consider any
parabolic subgroup $P$ of a reductive group $G$ with a Levi factor
$M$. Let $U$ be the unipotent radical of $P$. Let $\bar{U}$ be the
unipotent radical of the opposite parabolic subgroup of $P$ with
respect to $M$. Let $J_1$ and $J_2$ be two compact open subgroups of
$G$ such that $J_1$ contains $J_2$.  Suppose $J_1$ and $J_2$ both
satisfy Iwahori decomposition with respect to the pair $(P,
M)$. Assume
$$J_1\cap U=J_2\cap U~{\rm and}~ J_1\cap \bar{U}=J_2\cap \bar{U}.$$
Let $\lambda$ be an irreducible smooth representation of $J_2$ which
admits an Iwahori decomposition i.e. $J_2\cap U$ and $J_2\cap \bar{U}$
are contained in the kernel of $\lambda$.
\begin{lemma}\label{common_levi_induct}
  The representation $\ind_{J_2}^{J_1}(\lambda)$ is the extension of
  the representation $\ind_{J_2\cap M}^{J_1\cap M}(\lambda)$ such that
  $J_1\cap U$ and $J_1\cap \bar{U}$ are contained in the kernel of the
  extension.
\end{lemma}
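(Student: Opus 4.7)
The plan is to exploit the Iwahori decompositions of $J_1$ and $J_2$ to produce a canonical bijection $J_2\backslash J_1 \cong (J_2\cap M)\backslash(J_1\cap M)$, and then to show by direct computation that restriction of functions from $J_1$ to $J_1\cap M$ furnishes a $(J_1\cap M)$-equivariant isomorphism $\ind_{J_2}^{J_1}\lambda\xrightarrow{\sim}\ind_{J_2\cap M}^{J_1\cap M}\lambda$, with $J_1\cap U$ and $J_1\cap\bar{U}$ acting trivially on the source. Once these two facts are in hand, the lemma follows immediately.

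First I would verify the factorisation $J_1=J_2\cdot(J_1\cap M)$. Given $g\in J_1$, Iwahori decomposition writes $g=\bar{u}\,m\,u$ with $\bar{u}\in J_1\cap\bar{U}$, $m\in J_1\cap M$, and $u\in J_1\cap U$. By hypothesis $\bar{u}\in J_2\cap\bar{U}\subseteq J_2$ and $u\in J_2\cap U\subseteq J_2$. Since $J_1\cap M$ normalises $J_1\cap U$ (a standard consequence of the Iwahori decomposition being a group decomposition), $mum^{-1}\in J_1\cap U=J_2\cap U\subseteq J_2$, so $g=(\bar{u}\cdot mum^{-1})\cdot m\in J_2\cdot m$. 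Combined with $(J_1\cap M)\cap J_2=J_2\cap M$, the inclusion $J_1\cap M\hookrightarrow J_1$ induces the desired bijection on right cosets, and consequently restriction of functions from $J_1$ to $J_1\cap M$ is a linear isomorphism from $\ind_{J_2}^{J_1}\lambda$ to $\ind_{J_2\cap M}^{J_1\cap M}\lambda$.

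It remains to trace the $J_1$-action through this isomorphism. Fix a set $\{m_i\}\subset J_1\cap M$ of coset representatives for $(J_2\cap M)\backslash(J_1\cap M)$, hence also for $J_2\backslash J_1$. For $u\in J_1\cap U=J_2\cap U$ and $f\in\ind_{J_2}^{J_1}\lambda$, we compute $f(m_iu)=f((m_iu m_i^{-1})m_i)=\lambda(m_iu m_i^{-1})f(m_i)=f(m_i)$, using that $m_iu m_i^{-1}\in J_2\cap U\subseteq\ker\lambda$; the analogous calculation handles $J_1\cap\bar{U}$. For $m\in J_1\cap M$, writing $m_im=j_0 m_{i'}$ with $j_0\in J_2\cap M$ shows that right translation by $m$ on $\ind_{J_2}^{J_1}\lambda$ transports, under restriction, to right translation by $m$ on $\ind_{J_2\cap M}^{J_1\cap M}\lambda$. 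There is no real obstacle; the argument is essentially bookkeeping with the Iwahori decomposition together with the assumption that $\lambda$ is trivial on $J_2\cap U$ and $J_2\cap\bar{U}$.
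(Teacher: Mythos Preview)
Your argument is correct and is the standard one. The paper does not supply its own proof of this lemma; it simply cites \cite[Lemma~2.6]{level_zero_gl_n_types} and moves on, so there is nothing substantive to compare against. Your write-up is exactly the expected verification: the Iwahori factorisation together with $J_1\cap U=J_2\cap U$ and $J_1\cap\bar U=J_2\cap\bar U$ forces $J_1=J_2\,(J_1\cap M)$, whence restriction to $J_1\cap M$ is a bijection on the induced spaces, and the triviality of $\lambda$ on $J_2\cap U$ and $J_2\cap\bar U$ gives the triviality of $J_1\cap U$ and $J_1\cap\bar U$ on $\ind_{J_2}^{J_1}\lambda$.
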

Let us resume with the present case where $G$ is a split classical
group.  Let $\mathfrak{s}=[M, \sigma_M]_G$ be an
  inertial class such that $M\neq G$. Let $K_M$ be a
hyperspecial maximal compact subgroup of $M$. Let $\sigma_M$ be a
cuspidal representation of $M$ such that $\sigma_M$ contains a
depth-zero type of the form $(K_M, \tau_M)$. Let the hyperspecial
vertex in Bruhat--Tits building of $M$, corresponding to $K_M$, be
contained in the apartment corresponding to a maximal split torus $T$
(defined over $F$) of $M$. Such a torus $T$ is characterised by the
property that $K_M\cap T$ is the maximal compact subgroup of $T$ (see
\cite[2.6]{unrefined_minimal_types}).

Let $K$ be a hyperspecial maximal compact subgroup of $G$ such that
$K$ contains $K_M$. Let $T$ be a torus defined as in the above
paragraph. Now $K\cap T$ is the maximal compact subgroup of $T$. This
shows that $K$ is the parahoric subgroup of $G$ associated to a
hyperspecial vertex in the apartment corresponding to $T$. Let $B$ be
the standard basis of $W$ associated to $T$. There exists a self-dual
lattice chain $\Lambda$ such that $B$ is a splitting of $\Lambda$ and
$K=U_0(\Lambda)\cap G$.

Now the group $M$ is $K$-conjugate to a standard Levi subgroup defined
with respect to the basis $B$ and a flag $\mathcal{F}_I$ as defined in
\eqref{flags}, for some sequence of integers $I$ as defined in
\eqref{num_inv}. Hence, we may (and do) assume that $M$ is a standard
Levi subgroup corresponding to $\mathcal{F}_I$. Let $P$ be the
standard parabolic subgroup fixing the flag $\mathcal{F}_I$.  The
group $M$ is a Levi factor of $P$. Let $P(1)$ be the group
$K(1)(P\cap K)$. The representation $\tau_M$ extends as a
representation of $P(1)$ such that $P(1)\cap U$ and $P(1)\cap \bar{U}$
are contained in the kernel of this extension. With this we have the
following theorem:
\begin{theorem}\label{classification theorem}
  Let $\mathfrak{s}=[M, \sigma_M]_G$ be an inertial class such that
  $M\neq G$. Assume that $\sigma_M$ contains a depth-zero type of the
  form $(K_M, \tau_M)$, where $K_M$ is a hyperspecial maximal compact
  subgroup of $M$. Let $K$ be a hyperspecial maximal compact subgroup
  of $G$ containing $K_M$.  If $\tau$ is an $\mathfrak{s}$-typical
  representation of $K$, then $\tau$ is a subrepresentation of
  $\ind_{P(1)}^K\tau_M$.
\end{theorem}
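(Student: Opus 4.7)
The plan is to build on Lemma \ref{conley_lemma}, which already places every $\mathfrak{s}$-typical representation $\tau$ of $K$ inside $\ind_{K\cap P}^K\tau_M$. Since $K\cap P=\bigcap_{m\geq 1}P(m)$, one has the filtration
$$\ind_{K\cap P}^K\tau_M=\bigcup_{m\geq 1}\ind_{P(m)}^K\tau_M,$$
so the theorem reduces to showing that, for every $m\geq 1$, every irreducible subrepresentation of the quotient $\ind_{P(m+1)}^K\tau_M/\ind_{P(m)}^K\tau_M$ is atypical.

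To analyse this successive quotient I would factor the induction through the maximal parabolic $P_1\supset P$ and invoke the chain $P(1,m+1)\subset R(m)\subset P(1,m)$ from Section \ref{sec_auxiliary}. Because $K(1)\cap P_1$ lies in the kernel of the inflated $\tau_M$, the filtration $\{P(m)\}$ can be replaced, at the level of induced representations, by the filtration $\{P(1,m)\}$. Lemma \ref{common_levi_induct}, applied to the Iwahori decompositions of these groups with respect to $(P_1,M_1)$, then reduces the study of the quotient to the Mackey--Clifford decompositions
$$\ind_{R(m)}^{P(1,m)}\id\simeq\bigoplus_{\eta\in\Lambda_1'}\ind_{Z_{P(1,m)}(\eta)}^{P(1,m)}U_\eta,\qquad \ind_{P(1,m+1)}^{R(m)}\id\simeq\bigoplus_{\eta\in\Lambda_2'}\ind_{Z_{R(m)}(\eta)}^{R(m)}U'_\eta,$$
each twisted by $\tau_M$; the trivial-character summands reconstruct $\ind_{P(m)}^K\tau_M$, while the remaining summands build the quotient.

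For a non-trivial orbit $[\eta]$, the irreducible subrepresentations of the associated summand, after tensoring with $\tau_M$ and pushing through $\pi_1$, correspond to irreducible $H$-subrepresentations of $\res_H\gamma_M$, where $H$ is the image of $Z_{\tilde H}(\eta)$ in $M(k_F)$ as in Lemma \ref{lemma_bounds}. That lemma, whose proof depends essentially on $q_F>5$, supplies a non-cuspidal irreducible representation $\gamma'$ of $M(k_F)$ whose associated inertial class of $G$ differs from $\mathfrak{s}$ and whose restriction to $H$ shares an irreducible summand with $\res_H\gamma_M$. By Frobenius reciprocity, every such irreducible subrepresentation then occurs in the restriction to $K$ of an irreducible smooth $G$-representation lying in a Bernstein component distinct from $\mathcal{R}_\mathfrak{s}(G)$, and is therefore atypical.

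The main obstacle will be the second step: one must carefully match the Iwahori decompositions of $P(m),P(m+1),R(m),P(1,m)$ and $P(1,m+1)$ with respect to $(P_1,M_1)$ to the inflated extensions of $\tau_M$, so that Lemma \ref{common_levi_induct} genuinely identifies $\ind_{P(m+1)}^K\tau_M/\ind_{P(m)}^K\tau_M$ with the direct sum of the twisted non-trivial Mackey--Clifford summands. Once this identification is in hand, Lemma \ref{lemma_bounds} delivers the atypicality statement at once.
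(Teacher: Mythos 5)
Your proposal reproduces the overall architecture of the paper's proof, but there is a genuine gap at the point where you pass from the filtration $\{P(m)\}_{m\geq 1}$ to the filtration $\{P(1,m)\}_{m\geq 1}$. You justify this by saying ``$K(1)\cap P_1$ lies in the kernel of the inflated $\tau_M$,'' but that does not make the two induced representations agree: for $m>1$ one has $P(m)\subsetneq P(1,m)$ (the groups differ in their intersections with the Levi $M_1$, namely $P_J(m)\times(K\cap G_{r+1})$ versus $P_J(1)\times(K\cap G_{r+1})$, where $P_J(m)$ is the level-$m$ congruence subgroup of the parabolic $P_J\subset {\rm GL}(W_r^+)$), so $\ind_{P(1,m)}^K\tau_M$ is a \emph{proper} subrepresentation of $\ind_{P(m)}^K\tau_M$, and $\bigcup_m \ind_{P(1,m)}^K\tau_M=\ind_{P(1)\cap P_1}^K\tau_M$ is a proper subrepresentation of $\ind_{K\cap P}^K\tau_M$. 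Enlarging the inducing subgroup by part of the kernel shrinks the induced representation; it does not leave it unchanged. So your reduction step is simply false as stated, and you are left with no control on the quotient $\ind_{P(m)}^K\tau_M/\ind_{P(1,m)}^K\tau_M$.

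The paper closes exactly this gap by an extra reduction that your proposal omits. After writing $\ind_{P(m)}^K\tau_M\simeq \ind_{P_1(m)}^K\bigl((\ind_{P_J(m)}^{K'}\tau_J)\boxtimes\tau_{r+1}\bigr)$ with $K'={\rm GL}(W_r^+)\cap K$ via transitivity of induction and Lemma \ref{common_levi_induct}, it invokes the already-known classification of typical representations for ${\rm GL}_n$ (Theorem 1.1 of \cite{level_zero_gl_n_types}) to conclude that every irreducible $K'$-subrepresentation of $\ind_{P_J(m)}^{K'}\tau_J/\ind_{P_J(1)}^{K'}\tau_J$ is atypical; only then does it replace $P(m)$ by $P(1,m)$ and run the $R(m)$-chain with Lemma \ref{lemma_bounds}. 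In other words, the congruence deepening in the ``${\rm GL}$-direction'' inside $M_1$ and the congruence deepening in the ``$\bar U_1$-direction'' are handled by two different arguments, and your proposal only supplies the second. To repair your proof you would either need to insert the ${\rm GL}_n$ typicality input at the step you glossed over, or else rebuild Lemmas \ref{normality_lemma} and \ref{lemma_bounds} for an intermediate chain between $P(m+1)$ and $P(m)$ whose quotients also see the $P_J(m)/P_J(m+1)$ contributions --- which is essentially a reproof of the ${\rm GL}_n$ case.
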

\begin{proof}
  Let $P$ be the $G$ stabilizer of the flag 
$$\mathcal{F}_I=W_1^+\subset W_2^+\subset \cdots\subset W_r^+.$$
  Let $P_1$ be the
  $G$-stabiliser of the space $W_r^+$. Let $\mathcal{F}_J$ be the flag
  $$W_1^+\subset W_2^+\subset \cdots\subset W_{r-1}^+.$$ 
  Let $P_J$ be the parabolic subgroup of $G(W_{r}^+)$ fixing the
  flag $\mathcal{F}_J$. Let $M_J$ be the subgroup of ${\rm GL}(W_r^+)$
  fixing the decomposition
$$V_1^+\oplus V_2^+\oplus\cdots\oplus V_r^+.$$
The group $M_J$ is a Levi factor of the parabolic subgroup $P_J$.  We
recall that
$$M\simeq G_1\times G_2\times \cdots\times G_r\times G_{r+1},$$
where $G_i={\rm GL}(V^+_i)$, for $1\leq i\leq r$, and $G_{r+1}$ is the
$F$-points of the connected component of the isotropy subgroup of
$(V_{r+1}, q)$. 

We then identify $\sigma_M$ with
$\sigma_1\boxtimes\cdots\boxtimes\sigma_{r+1}$ where $\sigma_i$ is a
cuspidal representation of the group $G_i$, for all $1\leq i\leq r+1$.
Let $\tau_i$ be the unique $K\cap G_i$-typical representation occurring
in the cuspidal representation $\sigma_i$, for $1\leq i\leq r+1$. The
$K_M$ representation $\tau_M$ is isomorphic to the representation 
$$\tau_1\boxtimes\cdots\boxtimes\tau_r\boxtimes \tau_{r+1}.$$
From Lemma \ref{conley_lemma} we know that any
irreducible $K$-subrepresentation of
$$i_{P}^{G}\sigma_M/\ind_{P\cap K}^K\tau_M$$ is atypical.
Now the representation $\ind_{P\cap K}^K\tau_M$ is the union of the
representations $\ind_{P(m)}^K\tau_M$ for $m\geq 1$.

Let $K'$ be the compact open subgroup ${\rm GL}(W_r^+) \cap K$ of
${\rm GL}(W_r^+)$. Let $K'(m)$ be the principal congruence subgroup of 
level $m$ contained in $K$. The compact group $K'(m)\cap (P_J\cap K')$
is denoted by $P_J(m)$. Let $\tau_J$ be the $K'\cap M_J$-representation 
$$\tau_1\boxtimes\tau_2\boxtimes\cdots\boxtimes\tau_r.$$
The representation $\tau_J$ extends as a representation of $P_J(m)$ via
inflation from the map 
$$P_J(m)\rightarrow P_J(k_F)\rightarrow M_J(k_F).$$

From transitivity of induction and using Lemma
\ref{common_levi_induct}, we see that
$$\ind_{P(m)}^K\tau_M\simeq
\ind_{P_{1}(m)}^{K}\{(\ind_{P_{J}(m)}^{K'}\tau_J)\boxtimes\tau_{r+1}\}.$$
The irreducible $K'$-subrepresentations of
$\ind_{P_{J}(m)}^{K'}\tau_J/\ind_{P_{J}(1)}^{K'}\tau_J$ are atypical
from the result \cite[Theorem 1.1]{level_zero_gl_n_types}. Hence
$\mathfrak{s}$-typical representations of $K$ can only occur as
subrepresentations of
$$\ind_{P_{1}(m)}^{K}\{(\ind_{P_{J}(1)}^{K'}\tau_J)\boxtimes\tau'\}\simeq
\ind_{P(1,m)}^{K}\tau_M.$$ 

Now from Lemmas 3.2 and 2.5 we get that
$$\ind^{P(1,m)}_{P(1,m+1)}\id=\id\oplus \bigoplus_{i=1}^{k}\ind_{H_i}^{P(1,m)}U_i$$
such that any irreducible subrepresentation $\chi$ of
$\res_{H_i}\tau_I$ occurs in $\res_{H_i}\tau_I'$. Moreover, the
Bernstein components associated to the pairs $(P_I(1), \tau_I)$ and
$(P_I(1), \tau_I')$ are distinct. 
Note that 
\begin{align*}
\ind_{P(1,m+1)}^K\tau_{M}\simeq&
\ind_{P(1,m)}^K\{\ind_{P(1,m+1)}^{P(1,m)}\id\}\otimes \tau_M\\
\simeq &\ind_{P(1,m)}^K\tau_{M}\oplus
         \ind_{H_i}^{P(1,m)}(U_i\times \res_{H_i}\tau_M).
\end{align*}
Using induction on $m$, any $\mathfrak{s}$-typical representation
occurs as a subrepresentation of
$\ind_{P(1)}^{K}\tau_M$. Recall that the subgroup
  $P(1,1)$ is equal to $P(1)$. Since $(P(1), \tau_M)$ is a
Bushnell--Kutzko's type for $[M, \sigma_M]$, we complete the proof of
the theorem.
\end{proof}
\section{Principal series components}\label{princ}
Let ${\bf G}$ be the split classical group defined as the connected
component of the isometry group of $(W,q)$, as in Section
\ref{preli}. Let $K$ be a hyperspecial maximal compact subgroup of
$G$. Let ${\bf T}$ be a maximal split torus of ${\bf G}$ defined over $F$
such that $K\cap T$ is the maximal compact subgroup of $T$. Let
\begin{equation}\label{princ_basis_std}
  (w_i :-n\leq i\leq n)
\end{equation}
be a standard basis associated to $T$. Now there exists a self-dual
lattice chain $\Lambda$ such that the basis \eqref{princ_basis_std} is
a splitting of $\Lambda$ and $K=U_0(\Lambda)\cap G$. Let
$$\Lambda(0)=\mathfrak{p}_F^{a_n}w_n\oplus 
\mathfrak{p}_F^{a_{n-1}}w_{n-1}\oplus\cdots\oplus
\mathfrak{p}_F^{a_{-n+1}}w_{-n+1}\oplus\mathfrak{p}_F^{a_{-n}}w_{-n}.$$
We fix a basis
$$\{\varpi_F^{a_n}w_n, \varpi_F^{a_{n-1}}w_{n-1},\dots,
\varpi_F^{a_{-n+1}}w_{-n+1}, \varpi_F^{a_{-n}}w_{-n} \}$$ of $W$. Now, using
this basis, we get an embedding
\begin{equation}\label{iota_embedding}
\iota :G\rightarrow \g{N}{F}.
\end{equation}
of $G$ in ${\rm GL}_N(F)$.
The image of the maximal compact subgroup $K$ can be identified with
${\rm GL}_N(\mathfrak{o}_F)\cap \iota(G)$. The torus $T$ is the group of
diagonal matrices of $\iota(G)$. Let ${\bf B}$ be the Borel
subgroup of ${\bf G}$ such that $B$ is a subgroup of upper triangular
matrices in ${\rm GL}_N(F)$. We denote by $\bar{{\bf B}}$, the
opposite Borel subgroup of ${\bf B}$ with respect to ${\bf T}$. Let
${\bf U}$ and $\bar{{\bf U}}$ be the unipotent radicals of ${\bf B}$ and
$\bar{{\bf B}}$ respectively.  

We identify the torus $T$ with
$(F^{\times})^n$ by the map
$$\diag(t_1, t_2, \dots t_n, t_n^{-1}, \dots,
t_2^{-1}, t_1^{-1})\mapsto (t_1, \cdots, t_n),~ t_i\in F^{\times}.$$
We also identify a character $\chi$ of $T$ with 
$$\chi=\chi_1\boxtimes\dots\boxtimes\chi_n,$$ where $\chi_i$ is a character
of $F^{\times}$. The conductor of $\chi_i$, denoted
by $l(\chi_i)$, is the least positive integer $n$
such that $1+\mathfrak{p}_F^n$ is contained in the kernel of
$\chi$. {\bf In this section, we assume that
  $$l(\chi_i)\neq l(\chi_j)~ {\rm for~all}~ i\neq j.$$}

Let $\mathfrak{s}$ be the inertial class $[T, \chi]$.  Let $\tau$ be
an $\mathfrak{s}$-typical representation of $K$. The representation
$\tau$ occurs as a subrepresentation of an irreducible smooth
representation $\pi$ of $G$. By definition, the inertial support of
the representation $\pi$ is equal to $\mathfrak{s}$. Hence, $\tau$ is
an irreducible subrepresentation $\res_Ki_B^G\chi$. The
$G$-representations $i_B^G\chi$ and $i_B^G\chi^w$ have the same
Jordan--Holder factors, for all $w\in N_G(T)$.  This shows that, for
the purpose of understanding $\mathfrak{s}$-typical representations of
$K$, we may (and do) arrange the characters
$\chi_1, \chi_2, \dots, \chi_n$ (conjugating by an element in the Weyl
group if necessary) such that
 \begin{equation}\label{conductor_condition}
l(\chi_i)>l(\chi_j)~{\rm for}~i<j.
\end{equation}

Types for any Bernstein component $[T, \chi]$ of a split reductive
group ${\bf G}$ are constructed by Roche in \cite{AllanRoche}.  We
recall his constructions from \cite[Section 2,3]{AllanRoche}. Let
${\bf B}$ be any Borel subgroup of ${\bf G}$ containing a maximal
split torus ${\bf T}$.  Let ${\bf U}$ be the unipotent radical of
${\bf B}$ and $\bar{{\bf U}}$ be the unipotent radical of the opposite
Borel subgroup $\bar{{\bf B}}$ of ${\bf B}$ with respect to ${\bf T}$.
Let $\Phi$ be the set of roots of ${\bf G}$ with respect to ${\bf T}$.
Let $\Phi^+$ and $\Phi^-$ be the set of positive and negative roots
with respect to the choice of the Borel subgroup ${\bf B}$
respectively. Let $f_{\chi}$ be the function on $\Phi$ defined by

\begin{eqnarray}
  f_{\chi}(\alpha)=\begin{cases}
    [l(\chi\alpha^\vee)]/2 &\text{if $\alpha\in \Phi^+$}\\
    [(l(\chi\alpha^\vee)+1)/2] &\text{if $\alpha\in \Phi^-$.}
      \end{cases}
 \end{eqnarray}
 Let $x_\alpha:\mathbb{G}_a\rightarrow {\rm U}_\alpha$ be the root
 group isomorphism, and let $U_{\alpha,t}$ be the group
 $x_\alpha(\mathfrak{p}_F^t)$.  Let $T_0$ be the maximal compact
 subgroup of $T$. Let $U_{\chi}^\pm$ be the group generated by
 $U_{\alpha, f_\chi(\alpha)}$, for all $\alpha\in \Phi^\pm$. Let
 $J_\chi$ be the group generated by $U_\chi^+$, $T_0$, and
 $U_\chi^-$. The group $J_\chi$ has Iwahori decomposition with respect
 to the pair $(B, T)$ such that
$$J_\chi\cap U=U_\chi^+,~J_\chi\cap \bar{U}=U^-_\chi,~{\rm
  and}~J_\chi\cap T=T_0.$$ The representation $\chi$ of $T_0$ extends
to a representation of $J_\chi$ such that $U_\chi^+$ and $U_\chi^-$
are both contained in the kernel of this extension. We use the same
notation $\chi$ for this extension.  The pair $(J_\chi, \chi)$ is
  a type for the Bernstein component $[T, \chi]$.  We apply these
results to a split classical group ${\bf G}$ with the diagonal torus
$T$ and the Borel subgroup ${\bf B}$ of ${\bf G}$ whose $F$-points are
upper triangular matrices, to get a type $(J_\chi, \chi)$ for $s$.
Let $\mathcal{I}$ be the group $K(1)(B\cap K)$. The group
$\mathcal{I}$ is an Iwahori subgroup of $G$, contained in $K$. We
  may (and do) choose the set of root group isomorphisms
  $\{x_\alpha:\mathbb{G}_a\rightarrow {\rm U}_\alpha|\ \alpha\in
  \Phi\}$ such that $J_{\rm \id}$ is equal to $\mathcal{I}$.  Moreover,
for such a choice, we get that $J_\chi$ is a subgroup of
$\mathcal{I}$.

Before going any further, we need some notation. Consider the isotropic
space $W_1^+$ spanned by $w_1$, and $W_1^-$ the space spanned by
$w_{-1}$. Let $P_1$ be a parabolic subgroup of $G$ fixing the space
$W_1^+$. Let $M_1$ be the standard Levi factor of $P_1$, i.e, the
$G$-stabiliser of the decomposition
$$W_1^+\oplus(W_1^+\oplus W_1^-)^\perp \oplus W_{1}^-.$$
The group $M_{1}$ isomorphic to $F^\times \times G(W')$, where $W'$ is
equal to $(W_1^+\oplus W_1^-)^\perp$. Let $\bar{U}_1$ be the unipotent
radical of the opposite parabolic subgroup $\bar{P}_1$ of $P_1$ with
respect to $M_1$. Let $m$ be any positive integer such that $m\geq
l(\chi_1)$.  Define the compact open subgroups $P_{1}^0(m)$ and
$R^0(m)$ by
$$P_{1}^0(m) =(U_{1}\cap P_{1}(m))(M_{1}\cap J_\chi)
 (\bar{U}_{1}\cap P_{1}(m))$$
and 
$$R^0(m) =(U_{1}\cap R(m))(M_{1}\cap J_\chi)(\bar{U}_{1}\cap R(m))$$
respectively. Here $R(m)$ is the group as defined in Section
\ref{sec_auxiliary}.

For inductive arguments we will use the decomposition of the following
representations 
$$\ind_{R^0(m)}^{P^0_{1}(m)}\id~{\rm and}~\ind^{R^0(m)}_{P^0_{1}(m+1)}\id.$$
 Let
$K_1$ and $K_2$ be the kernels of the maps
$$P_{1}^0(m)\xrightarrow{\pi_1} P_{1}(k_F)\rightarrow 
M_{1}(k_F)~{\rm and}~R^0(m)\xrightarrow{\pi_1} P_{1}(k_F)\rightarrow
M_{1}(k_F)$$ respectively.  Recall that the map $\pi_1$ is reduction
mod $\mathfrak{p}_F$ map. Using the arguments similar to Lemma
\ref{normality_lemma} we get that
$$K_1\cap R^0(m)\trianglelefteq K_1~{\rm and}~K_2\cap
P_{1}^0(m+1)\trianglelefteq K_2.$$ Now let $\Lambda_1$ and
$\Lambda_2$ be the set of representatives for the orbits of the action
of the groups $P^0_{1}(m)$ and $R^0(m)$ on the set of characters of
the groups $K_1/(K_1\cap R^0(m))$ and $K_2/(K_2\cap
P^0_{1}(m+1))$. We then have
$$\ind^{P^0_{1}(m)}_{R^0(m)}\id
\simeq \oplus_{\eta\in \Lambda_1}
\ind_{Z_{P^0_{1}(m)}(\eta)}^{P^0_{1}(m)}U_\eta$$
and 
$$\ind^{R^0(m)}_{P_{1}(m+1)}\id
\simeq \oplus_{\eta\in \Lambda_2}
\ind_{Z_{R^0(m)}(\eta)}^{R^0(m)}U_\eta.$$
We note that $$Z_{P^0_{1}(m)}(\eta)=Z_{P^0_{1}(m)\cap M_{1}}(\eta)
K_1~{\rm and}~Z_{R^0(m)}(\eta)=Z_{R^0(m)\cap M_{1}}(\eta) K_2.$$ 

The group of characters of $K_1/(K_1\cap R^0(m))$ and $K_2/(K_2\cap
P^0_{1}(m+1))$ are isomorphic to the groups
$\bar{\mathfrak{n}}_{1}^1$ and $\bar{\mathfrak{n}}_{1}^2$
respectively.  The action of the group $P_{1}^0(m)\cap
M_{1}=R^0(m)\cap M_{1}$ factors through the quotient map
\begin{align*}
P_{1}^0(m)\cap M_{1}\rightarrow  M_{1}(k_F).
\end{align*}
The image 
of this quotient map is contained in $B(k_F)\cap M_{1}(k_F)$.
\begin{lemma}\label{lemma_princ}
  Let $u$ be any non-trivial element of $\bar{\mathfrak{n}}_1^i$ for
  $i\in\{1,2\}$. Let $H$ be the group $Z_{M_{1}(k_F)\cap
    B(k_F)}(u)$. There exists a character $\chi'$ of $T$ such that
  $$\res_H\chi=\res_H\chi'$$ and the inertial classes $[T, \chi]$
  and $[T, \chi']$ are distinct.
\end{lemma}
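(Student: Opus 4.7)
The plan is to proceed in parallel with Lemma~\ref{lemma_bounds}, but replacing the production of a non-cuspidal $M(k_F)$-representation by the production of a character $\chi'$ of $T$ in a different inertial class. The strategy is to twist $\chi$ along an ``active'' coroot direction of $u$ by a character $\psi$ of $F^\times$ of sufficiently large conductor, thereby producing $\chi' = \chi \cdot (\psi \circ \alpha)$ that agrees with $\chi$ on (the appropriate lift of) $H$, since $\alpha$ vanishes on $H \cap T(k_F)$, yet has a different conductor multiset.

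First, I would identify the roots appearing in $\bar{\mathfrak{n}}_1^i$. For $\bar{\mathfrak{n}}_1^2$ the unique root is $-2e_1$ (symplectic) or $-e_1$ (odd orthogonal); for $\bar{\mathfrak{n}}_1^1$ the roots are of the form $-e_1 \pm e_j$ with $j \geq 2$ (together with $-e_1$ for odd orthogonal). A non-zero $u$ decomposes as $u = \sum_\alpha c_\alpha x_\alpha$, and its $T(k_F)$-stabilizer is $\bigcap_\alpha \ker(\alpha|_{T(k_F)})$. Next, I would use the action of $B(k_F) \cap M_1(k_F) \cong T(k_F) \ltimes (\text{upper unipotent})$ to reduce $u$, via a Borel orbit analysis mirroring the case distinctions in Lemma~\ref{lemma_bounds}, to a convenient weight representative. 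In the favourable case $u$ is a non-zero scalar multiple of some root vector $x_\alpha$, giving $H \cap T(k_F) = \ker(\alpha)$; otherwise at least one ``active'' root $\alpha$ survives whose kernel still contains $H \cap T(k_F)$.

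Now set $\chi' := \chi \cdot (\psi \circ \alpha)$, so that in coordinates, with $\alpha = \sum_k \alpha_k e_k$, we have $\chi'_k = \chi_k \cdot \psi^{\alpha_k}$. Since $\psi \circ \alpha$ is trivial on $\ker(\alpha)$, the restrictions of $\chi$ and $\chi'$ to $H$ coincide (both being extended trivially on the unipotent part of the Borel). To verify $[T, \chi] \neq [T, \chi']$, I use the fact that two toral characters of a split classical group give the same inertial class iff they are Weyl-conjugate up to unramified twist, and that the Weyl group (signed permutations, with an evenness condition for $\mathrm{SO}_{2n}$) together with unramified twisting preserves the multiset $\{l(\chi_i)\}$. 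Choosing $\psi$ with $l(\psi) > \max_i l(\chi_i)$, and non-quadratic in the $-2e_1$ case so that $l(\psi^2) = l(\psi)$, the conductors of $\chi'$ at the indices on which $\alpha$ is supported become $l(\psi)$, producing a multiset distinct from the original (which, by hypothesis~\eqref{intro_7_cond}, had pairwise distinct values). The hypothesis $q_F > 5$ ensures that such a $\psi$ exists and that any additional finite set of bad choices can be avoided.

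The main obstacle is the Borel orbit analysis for $u \in \bar{\mathfrak{n}}_1^1$: unlike in Lemma~\ref{lemma_bounds}, the acting group here is only a Borel of $M_1(k_F)$ rather than the full Levi, so several orbit types arise and must be handled case by case to isolate an active root $\alpha$ whose kernel contains $H \cap T(k_F)$. The $\bar{\mathfrak{n}}_1^2$ case, being one-dimensional, is immediate. Once an active root is identified in each case, the coroot twist construction above gives $\chi'$ uniformly and the conductor multiset argument concludes the proof.
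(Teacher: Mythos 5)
Your overall strategy — locate an ``active'' root $\alpha$ along which the image of $H$ in $T(k_F)$ is annihilated, twist $\chi$ along the corresponding index, and separate inertial classes — is the same shape as the paper's argument. But the choice of twist is wrong in a way that breaks the first half of the conclusion.

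You set $\chi' = \chi \cdot (\psi\circ\alpha)$ with $l(\psi) > \max_i l(\chi_i)$, and you appeal to $\alpha|_{H\cap T(k_F)} = 1$ to argue that $\res_H\chi = \res_H\chi'$. However, in the application (Theorem \ref{main prince}) the relevant group is not a subgroup of the finite torus but a compact open subgroup of $G$ whose intersection with $T$ is a preimage under $T_0 \to T(k_F)$, and hence contains the full congruence subgroup $\ker(T_0 \to T(k_F))$. Since $\alpha$ is a root of the split group, $\alpha$ maps this congruence subgroup onto $1 + \mathfrak{p}_F$, so $\psi\circ\alpha$ is trivial on it only if $\psi$ is trivial on $1 + \mathfrak{p}_F$, i.e.\ only if $\psi$ has depth zero. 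Thus the twisting character must be depth-zero; the phrase ``$\alpha$ vanishes on $H\cap T(k_F)$'' gives you triviality mod $\mathfrak{p}_F$ but nothing deeper, so a high-conductor $\psi$ produces $\chi'$ with $\res_H\chi' \neq \res_H\chi$. (There is also the incidental fact that a high-conductor twist changes $J_\chi$ to a smaller $J_{\chi'}$, so the two sides of the identity would not even live on matching groups.)

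Once you are forced to take $\psi$ of depth zero, your separation argument collapses: for a depth-zero $\psi$ and $l(\chi_i) > 1$ (which the hypothesis $l(\chi_i) > l(\chi_j)$ for $i<j$ forces for all $i < n$, and in particular for the indices you twist), $l(\chi_i\psi^{\pm 1}) = l(\chi_i)$, so the conductor multiset is unchanged and cannot distinguish $[T,\chi]$ from $[T,\chi']$. This is exactly why the paper abandons conductors and instead compares the multiset of unordered pairs $\{\chi_i,\chi_i^{-1}\}$ up to unramified twist; the condition $[T,\chi]\neq[T,\chi']$ then reduces to avoiding a short explicit list of bad residual characters (e.g.\ $\eta \notin \{\chi_1^{-2}, \chi_1\chi_j, \chi_1\chi_j^{-1}\}$ in the $-e_1+e_j$ case, or, in the $-2e_1$ case, the automatic incompatibility of conductors $\eta^{-1} = \chi_1^2$ since $l(\chi_1)>1$), and $q_F > 5$ is what guarantees $k_F^\times$ has enough characters to avoid this finite bad set. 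To repair your proof you would keep the active-root reduction (the paper phrases it via a $U$-stable filtration of $\bar{\mathfrak{n}}_1^1$ on whose subquotients the unipotent part acts trivially, which cleanly yields the single constraint $t_1 t_j^{-1}=1$ on the image of $H$ in $T(k_F)$), replace the high-conductor twist by a depth-zero one, and replace the conductor-multiset comparison by the Weyl-orbit/unramified-twist multiplicity argument. A minor further inaccuracy: for orthogonal groups the block $\bar{\mathfrak{n}}_1^2$ for $n_r = 1$ is forced to vanish by the form's (anti)symmetry, so there is no $-e_1$ root there; the $\bar{\mathfrak{n}}_1^2$ case is genuinely a symplectic ($-2e_1$) phenomenon.
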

\begin{proof}
  The group $M_{1}(k_F)\cap B(k_F)$ is isomorphic to $k_F^\times
  \times B'$, where $B'$ is a Borel subgroup of $G(\bar{W}',
  \bar{q})$. The action of the group $k_F^\times\times B'$ on
  $\bar{\mathfrak{n}}_1^2$ factors through the projection
	$$k_F^\times \times B'\rightarrow k_F^\times.$$
	The action is given by the character $x\mapsto x^2$. Hence if
        $(x, b)$ belongs to $Z_{k_F^\times \times B'}(u)$ where $u\in
        \bar{\mathfrak{n}}_{1}^1\backslash \{0\}$ then $x^2=1$. In
        this case, consider a non-trivial character $\eta$ of
        $k_F^\times$ which is trivial on the group $\{\pm 1\}$. We
        consider the character $\eta$ as a character of
        $\mathfrak{o}_F^\times$ via inflation.  Set $\chi'$ to be the
        character
        $\chi_1\eta\boxtimes\chi_2\boxtimes\cdots\boxtimes\chi_n$. From
        the above definition we get $$\res_H\chi=\res_H\chi'.$$ If the
        Bernstein component $[T, \chi_1]$ is equivalent to $[T,
        \chi_2]$ then $\eta^{-1}=\chi_1^2$.  This is not possible as
        $l(\chi_1)\neq 1$. Hence the character $\chi'$ is the
        character satisfying the lemma.
	
	Now consider the case when $u$ belongs to
        $\bar{\mathfrak{n}}_{1}^1$.  The unipotent radical $U$ of
        $k_F^\times \times B'$ is a $p$-group. Hence there exists a
        flag $\{V_i; V_i\subset V_{i+1}\}$ of
        $\bar{\mathfrak{n}}_{1}^1$ stabilised by $k_F^\times \times
        B'$ such that $U$ acts trivially on $V_i/V_{i+1}$. Let $i$ be
        the least positive integer such that $u\in V_i$. The group $H$
        is contained in the $k_F^\times \times B'$-stabiliser of
        $\bar{u}$ in $V_i/V_{i-1}$. The group $U$ acts trivially on
        $V_i/V_{i-1}$. Hence the image of $H$ under the natural map
        $k_F^\times \times B' \rightarrow T(k_F)$ is contained in a
        group of the form
	$$\{\diag(t_1,t_2,\cdots, t_n, 1, t_{-n}, \cdots, t_1)|\  
	t_1t_j^{-1}=1\}.$$ Without loss of generality, assume that
        $j>0$. Consider the character $\chi'$ given by
	$$\chi'=\chi_1\eta
	\boxtimes\cdots\boxtimes\chi_j\eta^{-1}\boxtimes\cdots\boxtimes
        \chi_n.$$ If $(T, \chi)$ and $(T, \chi')$ are inertially
        equivalent, then the multiplicity of $\{\chi_1, \chi_1^{-1}\}$
        in the following multi-sets
	$$\{\{\chi_1, \chi_1^{-1}\}, \cdots, \{\chi_n, \chi_n^{-1}\}\}$$
	and  
	$$\{\{\chi_1\eta, \chi_1^{-1}\eta^{-1}\}, \cdots, 
	\{\chi_j\eta^{-1}, \chi_j^{-1}\eta\}, \cdots, 
	\{\chi_n, \chi_n^{-1}\}\}$$ must be the same.
	This implies that $\eta$ belongs to 
	$\{\chi_1^{-2},\chi_1\chi_j, \chi_1\chi_j^{-1}\}$. 
	Since $k_F^\times$ has cardinality bigger than $5$, there exists a 
	character $\eta$ such that $[T, \chi]$ and $[T, \chi']$ are not 
	inertially equivalent. This completes the proof of the lemma. 
      \end{proof}
      We need the following technical observation. Let $\chi$ and
      $\eta$ be two characters of $T$. Recall that $T$ is identified
      with $(F^\times)^n$ using the diagonal embedding using $\iota$ in
      \eqref{iota_embedding}. We identify $\chi$ with
      $\boxtimes_{i=1}^n\chi_i$ and $\eta$ with
      $\boxtimes_{i=1}^n\eta_i$.
      \begin{lemma}\label{princ_multip}
        Let $n>1$, and let $[T, \chi]_{M_1}$ and $[T, \eta]_{M_1}$ be two inertial
        classes such that $\res_{\mathfrak{o}_F^\times}\chi_1=\res_{\mathfrak{o}_F^\times}\eta_1$. If
        $[T, \chi]_{M_1}\neq [T, \eta]_{M_1}$, then
        $[T, \chi]_G\neq [T, \eta]_G$
      \end{lemma}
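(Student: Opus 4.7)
The plan is to prove the contrapositive: assume $[T,\chi]_G = [T,\eta]_G$ together with $\res_{\mathfrak{o}_F^\times}\chi_1 = \res_{\mathfrak{o}_F^\times}\eta_1$, and deduce $[T,\chi]_{M_1} = [T,\eta]_{M_1}$. The standing hypothesis of Section~\ref{princ} that the conductors $l(\chi_i)$ are pairwise distinct, together with the normalization \eqref{conductor_condition}, forces $l(\chi_1) \geq n \geq 2$; this is the key numerical input.

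First, I would unwind the $G$-inertial equivalence. There exist $w \in N_G(T)/T$ and an unramified character $\psi$ of $T$ with $\eta = \chi^w\cdot\psi$. Under the identification $T \cong (F^\times)^n$ fixed in Section~\ref{princ}, the Weyl group $W(G,T)$ is a signed permutation group: $w$ corresponds to $(\sigma,\vec\epsilon) \in S_n \times \{\pm 1\}^n$, subject to $\prod_i\epsilon_i = 1$ when $G \cong \mathrm{SO}_{2n}$. Writing $\psi = \boxtimes_i\psi_i$ with each $\psi_i$ an unramified character of $F^\times$, one gets coordinate-wise $\eta_i = \chi_{\sigma(i)}^{\epsilon_i}\cdot\psi_i$.

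Next, I would pin down $\sigma(1) = 1$ and then the sign $\epsilon_1 = 1$. Since $\psi_1$ is unramified and conductor is invariant under inversion, $l(\eta_1) = l(\chi_{\sigma(1)})$; on the other hand, $\res_{\mathfrak{o}_F^\times}\chi_1 = \res_{\mathfrak{o}_F^\times}\eta_1$ gives $l(\eta_1) = l(\chi_1)$, and distinctness of the $l(\chi_i)$ then forces $\sigma(1) = 1$. Next, if $\epsilon_1 = -1$, then $\eta_1 = \chi_1^{-1}\psi_1$, and matching restrictions to $\mathfrak{o}_F^\times$ yields $\chi_1^2|_{\mathfrak{o}_F^\times} = 1$, i.e., $\chi_1^2$ is unramified. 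Since the residue characteristic is odd, squaring is a bijection on each $1+\mathfrak{p}_F^k$, so $l(\chi_1^2) = l(\chi_1) \geq 2$, contradicting $\chi_1^2$ unramified. Hence $\epsilon_1 = 1$.

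Finally, with $\sigma(1)=1$ and $\epsilon_1 = 1$ in hand, the Weyl element $w$ acts trivially on the first coordinate and as a signed permutation on $\{2,\dots,n\}$. For $G \in \{\mathrm{Sp}_{2n},\mathrm{SO}_{2n+1}\}$ any such element automatically belongs to $W(M_1,T) = \{1\}\times W(G(W'),T')$; for $G \cong \mathrm{SO}_{2n}$, the parity constraint $\prod_i\epsilon_i = 1$ combined with $\epsilon_1 = 1$ forces an even number of sign flips on $\{2,\dots,n\}$, again placing $w$ in $W(M_1,T)$. Together with $\psi$ being unramified, the identity $\eta = \chi^w\cdot\psi$ now realizes an $M_1$-inertial equivalence, completing the proof. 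The main obstacle is the sign-flip step, where the pairwise-distinctness of conductors must be combined with the behavior of squaring on the pro-$p$ part of $\mathfrak{o}_F^\times$ to exclude the element of $W(G,T)\setminus W(M_1,T)$ inverting the first coordinate; once this is settled, the parity bookkeeping needed for $\mathrm{SO}_{2n}$ is routine.
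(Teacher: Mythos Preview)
Your argument is correct and takes a genuinely different route from the paper. The paper proves the lemma directly by a short multiset argument: it asserts that $[T,\chi]_{M_1}\neq[T,\eta]_{M_1}$ (together with $\res_{\mathfrak{o}_F^\times}\chi_1=\res_{\mathfrak{o}_F^\times}\eta_1$) forces some unordered pair $\{\res_{\mathfrak{o}_F^\times}\chi_i,\res_{\mathfrak{o}_F^\times}\chi_i^{-1}\}$ to occur with different multiplicities in the two multisets indexed by $j=2,\dots,n$, and then observes that appending the common pair $\{\res_{\mathfrak{o}_F^\times}\chi_1,\res_{\mathfrak{o}_F^\times}\chi_1^{-1}\}=\{\res_{\mathfrak{o}_F^\times}\eta_1,\res_{\mathfrak{o}_F^\times}\eta_1^{-1}\}$ to both sides preserves this discrepancy, whence $[T,\chi]_G\neq[T,\eta]_G$. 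No explicit appeal to the distinct-conductor hypothesis is made there. You instead run the contrapositive, isolating the Weyl element $w$ realising the $G$-equivalence and showing it must lie in $W(M_1,T)$: the distinct conductors force $\sigma(1)=1$, and $l(\chi_1)\geq n\geq 2$ together with odd residue characteristic rules out $\epsilon_1=-1$. What your approach buys is an explicit, clean treatment of the $\mathrm{SO}_{2n}$ parity constraint, which the paper's multiset step passes over; what it costs is the additional hypothesis of odd residue characteristic, which the paper does not assume (only $q_F>5$)---so you should either flag that hypothesis or, for $\mathrm{Sp}_{2n}$ and $\mathrm{SO}_{2n+1}$, note that when $\epsilon_1=-1$ one may replace $w$ by $s_1w$ (using that $\chi_1^2$ is then unramified) without leaving $W(M_1,T)$.
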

      \begin{proof} 
       Since $[T, \chi]_{M_1}\neq [T,
        \eta]_{M_1}$, there exists an integer $i$ with $2\leq i\leq n$
        such that the multiplicity of the multiset
        $\{\res_{\mathfrak{o}_F^\times}\chi_i,
        \res_{\mathfrak{o}_F^\times}\chi_i^{-1}\}$ has different multiplicities in the multisets
        $$\{\{\res_{\mathfrak{o}_F^\times}\chi_2,
        \res_{\mathfrak{o}_F^\times}\chi_2^{-1}\},\dots,
        \{\res_{\mathfrak{o}_F^\times}\chi_n,
        \res_{\mathfrak{o}_F^\times}\chi_n^{-1}\}\}$$
        and
        $$\{\{\res_{\mathfrak{o}_F^\times}\eta_2,
        \res_{\mathfrak{o}_F^\times}\eta_2^{-1}\},\dots,
        \{\res_{\mathfrak{o}_F^\times}\eta_n,
        \res_{\mathfrak{o}_F^\times}\eta_n^{-1}\}\}.$$ Hence, the
        multiset
        $\{\res_{\mathfrak{o}_F^\times}\chi_i,
        \res_{\mathfrak{o}_F^\times}\chi_i^{-1}\}$ will have different
        multiplicities in
        $$\{\{\res_{\mathfrak{o}_F^\times}\chi_1,
        \res_{\mathfrak{o}_F^\times}\chi_1^{-1}\},
        \{\res_{\mathfrak{o}_F^\times}\chi_2,
        \res_{\mathfrak{o}_F^\times}\chi_2^{-1}\},\dots,
        \{\res_{\mathfrak{o}_F^\times}\chi_n, \res_{\mathfrak{o}_F^\times}\chi_n^{-1}\}\}$$
        and
        $$\{\{\res_{\mathfrak{o}_F^\times}\eta_1,
        \res_{\mathfrak{o}_F^\times}\eta_1^{-1}\},
        \{\res_{\mathfrak{o}_F^\times}\eta_2,
        \res_{\mathfrak{o}_F^\times}\eta_2^{-1}\},\dots,
        \{\res_{\mathfrak{o}_F^\times}\eta_n,
        \res_{\mathfrak{o}_F^\times}\eta_n^{-1}\}\}.$$ This shows the lemma.
        \end{proof}
        We are now ready to classify $\mathfrak{s}=[T, \chi]$-typical
        representations of $K$.
\begin{theorem}\label{main prince}
  Let $K$ be the fixed hyperspecial maximal compact subgroup $G$. Let
  $\mathfrak{s}=[T, \boxtimes_{i=1}^n\chi_i]_G$ be a
  toral inertial class such that
  $l(\chi_i)>l(\chi_i)$, for all $i<j$. If $\tau$ is an
  $\mathfrak{s}$-typical representation of $K$, then $\tau$ is a
  subrepresentation of $\ind_{J_\chi}^K\chi$.
\end{theorem}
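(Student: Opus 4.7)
The plan is to mirror the proof of Theorem \ref{classification theorem}, proceeding by induction on the split rank $n$. For the inductive step, consider the maximal standard parabolic $P_1$ whose Levi factor $M_1\simeq F^\times\times G(W')$ corresponds to the first coordinate, and write $\chi=\chi_1\boxtimes\chi'$ with $\chi'=\boxtimes_{i=2}^n\chi_i$ a character of $T\cap G(W')$ still satisfying the conductor ordering \eqref{conductor_condition}. By transitivity of parabolic induction,
$$i_B^G\chi\simeq i_{P_1}^G\bigl(\chi_1\boxtimes i_{B'}^{G(W')}\chi'\bigr),$$
where $B'=B\cap G(W')$. Applying Mackey decomposition through the Iwasawa factorisation $G=P_1K$, together with the inductive hypothesis for $G(W')$ (which identifies typical representations of $K\cap G(W')$ as subrepresentations of $\ind_{J_{\chi'}}^{K\cap G(W')}\chi'$) and Lemma \ref{common_levi_induct}, I would conclude that any $\mathfrak{s}$-typical representation $\tau$ of $K$ must occur as a subrepresentation of $\ind_{P_1^0(m)}^K(\chi_1\boxtimes\chi')$ for some $m\geq l(\chi_1)$.

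Next, perform descending induction on $m$. The base case is $m=l(\chi_1)$: under the assumption $l(\chi_1)>l(\chi_i)$ for $i>1$, one verifies from the definition of $J_\chi$ (using that $l(\chi_1\chi_j^{\pm 1})=l(\chi_1)$ for $j>1$) that $P_1^0(l(\chi_1))=J_\chi$, whence the claim becomes $\tau\subseteq\ind_{J_\chi}^K\chi$. For the descent $m+1\to m$, use the intermediate compact subgroup $R^0(m)$ and Clifford theory to decompose
$$\ind_{P_1^0(m+1)}^{P_1^0(m)}\id\simeq\id\oplus\bigoplus_{\eta\neq\id}\ind_{Z_{P_1^0(m)}(\eta)}^{P_1^0(m)}U_\eta,$$
where $\eta$ ranges over representatives of nontrivial orbits of characters of the abelian quotients $K_1/(K_1\cap R^0(m))$ and $K_2/(K_2\cap P_1^0(m+1))$, identified respectively with $\bar{\mathfrak{n}}_1^1$ and $\bar{\mathfrak{n}}_1^2$ via the pairings used in Section \ref{sec_auxiliary}.

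For each nontrivial $\eta$, apply Lemma \ref{lemma_princ} to produce a character $\chi''$ of $T$ which agrees with $\chi$ on the image $H$ of $Z_{P_1^0(m)\cap M_1}(\eta)$ in $M_1(k_F)$ and for which $[T,\chi]_{M_1}\neq[T,\chi'']_{M_1}$. Since the $\chi''$ produced by Lemma \ref{lemma_princ} differs from $\chi$ only by characters of conductor one on some coordinates while $\res_{\mathfrak{o}_F^\times}\chi_1''=\res_{\mathfrak{o}_F^\times}\chi_1$, Lemma \ref{princ_multip} promotes the distinction to $[T,\chi]_G\neq[T,\chi'']_G$. Twisting by $\chi_1\boxtimes\chi'$ and using Frobenius reciprocity together with Lemma \ref{common_levi_induct}, every irreducible $K$-subrepresentation of the corresponding summand then occurs inside a $K$-representation induced from a type for the distinct inertial class $[T,\chi'']_G$, and hence is atypical; this completes the descent. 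The main obstacle I anticipate is the bookkeeping at the base case, namely the verification that $P_1^0(l(\chi_1))=J_\chi$ and that the Iwahori decomposition of $J_\chi$ with respect to $(P_1,M_1)$ realises $J_\chi\cap M_1\simeq\mathfrak{o}_F^\times\times J_{\chi'}$, so that the inductive hypothesis on $G(W')$ plugs in cleanly.
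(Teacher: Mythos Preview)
Your overall architecture matches the paper's proof: induction on $n$ to reduce to $\ind_{P_1^0(m)}^K\chi$, then descending induction on $m$ via Clifford theory and Lemma~\ref{lemma_princ}. The place where your proposal departs from the paper, and where it breaks, is exactly the point you flagged as the anticipated obstacle.

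The claim $P_1^0(l(\chi_1))=J_\chi$ is false as an equality of groups. Write $N=l(\chi_1)$. By definition $P_1^0(N)\cap U_1=U_1\cap P_1(N)=U_1\cap K$, so on each root subgroup $U_\alpha\subset U_1$ you get the full $U_{\alpha,0}$. On the other hand, for such a positive root $\alpha$ one has $l(\chi\circ\alpha^\vee)=N$ (this is where the hypothesis $l(\chi_1)>l(\chi_j)$ enters), so $J_\chi\cap U_\alpha=U_{\alpha,[N/2]}$, which is strictly smaller as soon as $N\geq 2$. Symmetrically, $P_1^0(N)\cap\bar U_1=\bar U_1\cap K(N)$ is strictly smaller than $J_\chi\cap\bar U_1$, whose $\alpha$-part is $U_{\alpha,[(N+1)/2]}$. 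Thus the two groups are distinct, although they have the same index in the Iwahori subgroup $\mathcal I$.

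The paper does not attempt to identify the groups. Instead it shows that the induced representations to $\mathcal I$ coincide: $\ind_{J_\chi}^{\mathcal I}\chi$ is irreducible by Roche's computation of the $G$-intertwining of $(J_\chi,\chi)$ \cite[Theorem~4.15]{AllanRoche}, there is a nonzero $\mathcal I$-homomorphism $\ind_{J_\chi}^{\mathcal I}\chi\to\ind_{P_1^0(N)}^{\mathcal I}\chi$ by Frobenius reciprocity, and the two representations have the same dimension. Hence $\ind_{J_\chi}^{K}\chi\simeq\ind_{P_1^0(N)}^{K}\chi$, which is what the base of your descending induction actually needs. Replace your group-theoretic identification by this irreducibility-plus-dimension argument and the rest of your outline goes through. (A minor point: Lemma~\ref{lemma_princ} already yields $[T,\chi]_G\neq[T,\chi']_G$, so invoking Lemma~\ref{princ_multip} in the Clifford step is unnecessary; that lemma is used by the paper only in the first reduction, to pass from $M_1$-atypicality to $G$-atypicality.)
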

\begin{proof}
  Using induction on $n$ we show that the representation
  $\ind_{J_\chi}^K\chi$ is a subrepresentation of $\res_Ki_B^G\chi$,
  and any irreducible subrepresentation
  of $$(\res_Ki_B^G\chi)/\ind_{J_\chi}^K\chi$$ is atypical.

  Assume
  this hypothesis to be true for all $n'<n$. From induction
  hypothesis, we get that 
  $$\res_Ki_{B\cap M_1}^{M_1}\chi=\ind_{J_\chi\cap M_1}^{K\cap M_1}\chi\oplus \tau'$$
  such that any irreducible $(K\cap M_1)$-subrepresentation of $\tau'$
  is atypical. Let $\xi$ be a $(K\cap M_1)$-irreducible
  subrepresentation of $\tau'$. Since the $(K\cap M_1)$-representation
  $\xi$ is atypical, it occurs as a subrepresentation of
  $\res_{K\cap M_1}i_S^{M_1}\kappa$, where $S$ is a standard parabolic
  subgroup of $M_1$ with Levi factor $L$ and $\kappa$ is cuspidal
  representation of $L$ such that
  $[L, \kappa]_{M_1}\neq [T, \chi]_{M_1}$.  Any irreducible
  $K$-subrepresentation of $\ind_{K\cap P_1}^{K_1}\xi$ occurs as a
  $K$-subrepresentation of
\begin{equation}\label{last_conley_lemma_rep}
  i_{P_1}^G(i_{S}^{M_1}\kappa).
  \end{equation}
  If $L\neq T$, then the cuspidal support of the representation
  \eqref{last_conley_lemma_rep} is not equal to $[T, \chi]_G$. Assume
  that $L=T$. Since we have $[T, \kappa]_{M_1}\neq [T, \chi]_{M_1}$,
  using Lemma \ref{princ_multip}, we get that
  $[T, \kappa]_G\neq [T, \chi]_G$. Hence, the irreducible
  subrepresentations of $\ind_{K\cap P_1}^{K_1}\xi$ are atypical.

  Let $\tau$ be any $\mathfrak{s}$-typical representation of $K$. From
  the above discussion, we get that $\tau$ is a subrepresentation of
	\begin{equation}\label{candidate}
          \ind_{K\cap P_{1}}^K\gamma~{\rm with}~\gamma=\ind_{J_\chi\cap
          M_{1}}^{K\cap M_{1}}\chi.
	\end{equation}
        Now let $N$ be the integer $l(\chi_1)$, the largest among the
        set of integers $\{l(\chi_i): 1\leq i\leq n\}$. Now the
        representation \eqref{candidate} is the union of the
        representations $\ind_{P_{1}(m)}^K\gamma$ for $m\geq N$. Hence
        any $\mathfrak{s}$-typical representation of $K$ occurs as a
        subrepresentation of $\ind_{P_{1}(m)}^K\gamma$, for some
        $m\geq N$. Note that the representation
        $\ind_{P_{1}(m)}^K\gamma$ is isomorphic to the representation
        $\ind_{P_{1}^0(m)}^K\chi$ (see Lemma
        \ref{common_levi_induct}).
	
	We use induction on $m\geq N$ to show that irreducible
        subrepresentations of
	$$\ind_{P_{1}^0(m+1)}^K\chi/\ind_{P_{1}^0(m)}^K\chi$$
	are atypical for all $m\geq N$.
	Now we have the isomorphism 
	\begin{align*}
          \ind_{P_{1}^0(m+1)}^K\chi&\simeq
          \ind_{P_{1}^0(m)}^K\{\chi\otimes(\ind_{P_{1}^0(m+1)}^{P_{1}^0(m)}\id)\}\\
          &\simeq \ind_{P_{1}^0(m)}^K\chi\oplus_{\eta\in \Lambda_1}
          \ind_{Z_{P_{1}^0(m)}(\eta)}^K(\chi\otimes U_\eta)\\
          &\oplus_{\eta\in \Lambda_2}
          \ind_{Z_{R^0(m)}(\eta)}^K(\chi\otimes U_\eta).
\end{align*}
Using Lemma \ref{lemma_princ}, we obtain a character $\chi'$ such that
$\res_{H}\chi'$ is equal to $\res_H\chi$ where $H$ is either
$Z_{P_{1}^0(m)}(\eta)$ or $Z_{R^0(m)}(\eta)$. Moreover, $[T, \chi]$
and $[T, \chi']$ are distinct inertial classes. Hence, $\tau$ is
contained in the representation $\ind_{P_{1}^0(N)}^K\chi$.

Let $\mathcal{I}$ be the Iwahori subgroup $K(1)(B\cap K)$, we have
$J_\chi\subseteq \mathcal{I}$.  Using support of the $G$-intertwining of
the pair $(J_\chi, \chi)$ in \cite[Theorem 4.15]{AllanRoche}, we note
that the representation $\ind_{J_\chi}^{\mathcal{I}}\chi$ is
irreducible. Moreover, we have
$${\rm Hom}_{\mathcal{I}}(\ind_{J_\chi}^{\mathcal{I}}\chi, 
\ind_{P^0_{1}(N)}^{\mathcal{I}}\chi)\neq 0.$$ From the definition of
$J_\chi$, we note that the dimensions of the representations
$\ind_{J_\chi}^{\mathcal{I}}\chi$ and
$\ind_{P_1^0(N)}^{\mathcal{I}}\chi$ are the same. This shows that
these representations are isomorphic. We conclude that, for any
$\mathfrak{s}$-typical representation $\tau$ of $K$, we get that
$\tau$ is a subrepresentation of $\ind_{J_\chi}^K\chi$. Moreover, the
representation $\ind_{J_\chi}^K\chi$ is a subrepresentation of
$\res_{K}i_B^G\chi$.
\end{proof} {\bf Acknowledgements:} The present work was carried out
at TIFR, Mumbai when the authors were postdoc fellows.  We thank the
School of Mathematics for their hospitality. The authors express their
gratitude to Radhika Ganapathy for her help with Bruhat--Tits
theory. We also thank Anne-Marie Aubert for patiently listening to our
ideas. The first named author is funded by ISRAEL SCIENCE FOUNDATION
grant No. 421/17 (Mondal). The second named author is supported by the
NWO Vidi grant ``A Hecke algebra approach to the local Langlands
correspondence" (nr. 639.032.528).  We would like to thank the
anonymous referee for helpful suggestions.

\bibliographystyle{amsalpha}
\bibliography{./biblio}
\noindent
Santosh Nadimpalli, 
IMAPP, Radboud
Universiteit Nijmegen,
 Heyendaalseweg 135, 6525AJ Nijmegen, 
The
Netherlands.
\texttt{nvrnsantosh@gmail.com}, \texttt{Santosh.Nadimpalli@.ru.nl}.
\\
\\
Amiya Kumar Mondal,
Department of Mathematics, Bar-Ilan University,
Ramat Gan 529002, Israel.
\texttt{E-mail:amiya96@gmail.com}
\end{document}